\theoremstyle{plain}
\newtheorem{theorem}{\bf Theorem}[section]
\newtheorem{proposition}[theorem]{\bf Proposition}
\newtheorem{lemma}[theorem]{\bf Lemma}
\newtheorem{problem}[theorem]{\bf Problem}
\theoremstyle{definition}
\newtheorem{example}[theorem]{\bf Example}
\newtheorem{definition}[theorem]{\bf Definition}
\newtheorem{remark}[theorem]{\bf Remark}
\newcommand{\N}{\mathbb N}
\newcommand{\Z}{\mathbb Z}
 \DeclareMathOperator{\ind}{ind}
 \DeclareMathOperator{\ord}{ord}
\renewcommand{\time}{\negthinspace \times \negthinspace}
\numberwithin{equation}{section}
\begin{document}
\title[Minimal zero-sum sequences of length four]{Minimal zero-sum sequences of length four over finite cyclic groups II}

\author{Yuanlin Li* and Jiangtao Peng}
\address{Department of Mathematics, Brock University, St. Catharines, Ontario,
Canada L2S 3A1} \email{yli@brocku.ca}

\address{College of Science, Civil Aviation University of China, Tianjin 300300, P.R. China} \email{jtpeng1982@yahoo.com.cn}

\thanks{This research was supported in part by a Discovery
Grant from the Natural Sciences and Engineering Research Council of
Canada,  the National Natural Science Foundation of China (No. 11126137 and No. 11271250) and a research grant from  Civil Aviation University of China.\\
*Corresponding author: Yuanlin Li, Department of Mathematics, Brock
University, St. Catharines, Ontario Canada L2S 3A1, Fax:(905)
378-5713;\\ E-mail: yli@brocku.ca (Y. Li) \\ \today}
\subjclass[2000]{Primary 11B30 11B50 20K01. \\ Key words
and phrases: minimal zero-sum sequences, index of sequences. }

\begin{abstract}
Let $G$ be a finite cyclic group. Every sequence $S$ over $G$ can be written in the form
$S=(n_1g)\cdot\ldots\cdot(n_lg)$ where $g\in G$ and $n_1, \ldots, n_l\in[1, \ord(g)]$, and the index $\ind(S)$ of $S$ is defined to be the minimum of $(n_1+\cdots+n_l)/\ord(g)$ over all possible $g\in G$ such that $\langle g \rangle =G$.
 An open problem on the index of length four sequences asks whether or not every minimal zero-sum sequence of length 4 over a finite cyclic group $G$ with $\gcd(|G|, 6)=1$ has index 1. In this paper, we show that if $G=\langle g\rangle$ is a cyclic group with order of a product of two prime powers and $\gcd(|G|, 6)=1$, then every minimal zero-sum sequence $S$ of the form $S=(g)(n_2g)(n_3g)(n_4g)$ has index 1. In particular, our result confirms that the above problem has an affirmative answer when the order of $G$ is a product of two different prime numbers or a prime power, extending a recent result by the first author, Plyley, Yuan and Zeng.
\end{abstract}
\maketitle

\section{Introduction}
\bigskip

Throughout this paper, let $G$ be an additively written finite cyclic
group of order $|G|=n$.  By a sequence over $G$ we mean a finite sequence of terms from $G$ which is unordered and repetition of terms is allowed. We view sequences over $G$ as elements of the free abelian moniod   $\mathcal{F}(G)$ and use multiplication notation. Thus a sequence  $S$ of length
$|S|= k$ is written in the
form $S=(n_1g)\cdot \, \ldots \,\cdot (n_kg)$ where $n_1,..., n_k \in \N $ and  $g\in G$. We call $S$  a \emph{zero-sum
sequence} if the sum of $S$ is zero (i.e. $\sum_{ i=1}^k n_ig=0$). If $S$ is a zero-sum sequence, but no proper
nontrivial subsequence of $S$ has sum zero, then  $S$ is called a \emph{minimal zero-sum sequence}.  Recall that the index of a sequence $S$ over $G$ is defined as follows.

\begin{definition}
 For a sequence over $G$
 $$S=(n_1g)\cdot\ldots\cdot(n_lg), \,\,\, \mbox{where} \,\, 1\le n_1, \ldots, n_l \le n,$$
 the index of $S$ is defined by
 $\ind(S)=\min\{\| S \|_g \,|\,g\in G \, \mbox{with}\,\, G=\langle g \rangle\}$ where  $$\|S\|_g=\frac{n_1+\cdots+n_l}{\ord(g)}.$$
\end{definition}
\noindent Clearly, $S$ has sum zero if and only if $\ind(S)$ is an integer. There are also slightly different definitions of the index in the literature, but they are all equivalent (see Lemma 5.1.2 in \cite{Ge:09a}).

 The index of a sequence is a crucial invariant
in the investigation of (minimal) zero-sum sequences (resp. of
zero-sum free sequences) over cyclic groups. It was first addressed
by Kleitman-Lemke (in the conjecture \cite[page 344]{KL:89}),
used as a key tool by Geroldinger (\cite[page 736]{G:87}), and
then investigated by Gao \cite{Gao:00} in a systematical way. Since then it has received a great deal of attention (see for example \cite{CFS:99, CS:05, GG:09, Ge:09a, LPYZ:10,
P:04, SC:07, XY:09, Y:09}).

 A main focus of the investigation of index is to determine minimal zero-sum sequences of index 1. If $S$ is a minimal zero-sum sequence of length $|S|$ such that $|S|\leq 3$ or $|S| \geq \lfloor\frac{n}{2}\rfloor+2$, then
ind(S) = 1 (see \cite{CFS:99,SC:07,Y:09}). In contrast to that, it was shown that for each $k \mbox{ with }  5 \leq k \le \lfloor\frac{n}{2}\rfloor + 1$,
there is a minimal zero-sum sequence $T$ of length $|T| = k$ with $\ind(T)\geq 2$ (\cite{SC:07, Y:09})  and that the
same is true for $k = 4$ and $\gcd(n, 6) \ne 1$ (\cite{P:04}). The only unsolved case is that whether or not every minimal zero-sum sequence of length 4 in a cyclic group $G$
with $\gcd(|G|, 6)=1$ has index 1 and  this leads to the following open problem.

\bigskip

\begin{problem}\label{problem}
Let $G$ be a finite cyclic group such that $\gcd(|G|, 6)=1$. Is it true that every minimal zero-sum sequence $S$ over $G$ of length $|S|=4$ has $\ind(S)=1$?
\end{problem}

In a recent paper \cite{LPYZ:10} the first author together with Plyley, Yuan, and Zeng  proved that the open problem (Problem~\ref{problem}) has an affirmative answer if $n$ is a prime power. However,  the general case is
still open. In this paper, we attempt to answer this problem affirmatively for a more general case when $n$ is a product of two prime powers. Our main result is as follows.

\begin{theorem}\label{maintheorem1}
Let $n=p_1^{\alpha} \cdot p_2 ^{\beta} $, where $p_1 \not = p_2 $ are primes and $ \alpha, \beta \in \N$, and  $\gcd(n,6)=1$. Let $S=(g)(x_2g)(x_3g)(x_4g)$ be a minimal zero-sum sequence over $G$ such that $\ord(g)=n$ and $1 \le x_2, x_3, x_4 \le n-1$.  Then $\ind(S)=1$.
\end{theorem}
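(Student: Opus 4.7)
Set $m = p_1^{\alpha}$ and $k = p_2^{\beta}$, so that $n = mk$ with $\gcd(m,k) = 1$. Since each $x_i \in [1, n-1]$ and $1 + x_2 + x_3 + x_4$ is a positive multiple of $n$ bounded above by $3n - 2$, this sum is either $n$ or $2n$. In the first case $\|S\|_g = 1$ and we are done, so the substantive case is $1 + x_2 + x_3 + x_4 = 2n$. Replacing $g$ by $zg$ for a unit $z \in (\Z/n\Z)^{\times}$ rewrites $S$ as $(\sigma_n(z))(\sigma_n(x_2 z))(\sigma_n(x_3 z))(\sigma_n(x_4 z))$, where $\sigma_n(w) \in [1, n-1]$ is the reduced representative of a nonzero class. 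Setting $f(z) = \sum_{i=1}^{4} \sigma_n(x_i z)$ with $x_1 = 1$, the goal reduces to producing a unit $z$ with $f(z) = n$. Two elementary facts are used throughout: $f(z) \in \{n, 2n, 3n\}$, and $\sigma_n(w) + \sigma_n(-w) = n$ yields the symmetry $f(z) + f(-z) = 4n$; so it is enough to preclude the rigid scenario $f(z) = 2n$ for every unit $z$.

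The next step is a Chinese Remainder Theorem reduction. For each $i$ let $a_i = x_i \bmod m$ and $b_i = x_i \bmod k$, and form the reduced sequences $\bar S_m = (1)(a_2)(a_3)(a_4)$ over $\Z_m$ and $\bar S_k = (1)(b_2)(b_3)(b_4)$ over $\Z_k$. Both are zero-sum, though either may contain a zero term or a proper zero-sum subsequence. The argument then splits on the structure of $\bar S_m$ and $\bar S_k$. When both are minimal zero-sum of length $4$ in their respective prime-power cyclic groups, the theorem of \cite{LPYZ:10} provides units $u \in \Z_m^{\times}$ and $v \in \Z_k^{\times}$ with $\sum_i \sigma_m(a_i u) = m$ and $\sum_i \sigma_k(b_i v) = k$. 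Lifting $(u,v)$ by CRT to $z \in \Z_n^{\times}$ gives a natural candidate; writing $r_i = \sigma_n(x_i z)$, one checks $r_i = \sigma_m(a_i u) + m\, t_i$ for a uniquely determined carry $t_i \in [0, k-1]$, and $f(z) = n$ is equivalent to $\sum_i t_i = k - 1$ (while the other allowed values $2k-1$ and $3k-1$ correspond to $f(z) = 2n$ and $3n$).

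In the degenerate cases, where some $a_i$ or $b_i$ vanishes or where $\bar S_m$ or $\bar S_k$ has a proper zero-sum subsequence, the minimality of $S$ over $\Z_n$ forces the corresponding $x_j$ to lie in the unique subgroup of order $k$ (respectively $m$), and only a short list of patterns for $(a_i)$ and $(b_i)$ remains admissible. Each pattern is then dispatched by an explicit choice of $z$, typically $z = \pm 1$, $z = \pm(1 + tm)$, or $z = \pm(1 + sk)$ for a small integer, with the remaining free parameters pinned down by the constraint $1 + x_2 + x_3 + x_4 = 2n$.

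The principal obstacle I foresee is the main case: a single CRT lift of two locally index-$1$ units is not automatically globally index-$1$, because the carries $t_i$ depend on the joint behavior of $u$ and $v$ and not on each factor in isolation. The plan is to exploit the flexibility of the LPYZ prime-power theorem (which typically admits several winning units in each factor), to use the symmetry $f(z) + f(-z) = 4n$ to replace a lift with $f(z) = 3n$ by $-z$ with $f(-z) = n$, and to appeal to $\gcd(n, 6) = 1$ in the divisibility analysis that rules out the hypothetical uniform pattern $\sum_i t_i = 2k - 1$ for all CRT lifts. I expect that quantitative control on the admissible carry patterns, combined with these symmetries, will close the argument.
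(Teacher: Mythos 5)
There is a genuine gap, and it sits exactly where you flag it: the main case of your plan is not an argument but a restatement of the theorem. After CRT-lifting locally winning units $u\in\Z_{m}^{\times}$, $v\in\Z_{k}^{\times}$ to $z\in\Z_n^{\times}$, the only information the two local index-$1$ conditions give about the residues $r_i=\sigma_n(x_iz)$ is the pair of congruences $r_i\equiv\sigma_m(a_iu)\pmod m$ and $r_i\equiv\sigma_k(b_iv)\pmod k$. Since each of the three possible values $n,2n,3n$ of $f(z)=\sum_i r_i$ is divisible by both $m$ and $k$, these congruences impose no constraint at all on which value occurs: your carry identity $\sum_i t_i\in\{k-1,\,2k-1,\,3k-1\}$ is precisely the original trichotomy in new notation. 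The symmetry $f(z)+f(-z)=4n$ only trades $3n$ for $n$; it cannot exclude the scenario in which every CRT lift of every pair of winning units yields $f(z)=2n$, and the promised ``quantitative control on the admissible carry patterns'' and the divisibility analysis ruling out the uniform pattern $\sum_i t_i=2k-1$ are nowhere constructed. No local--global principle of this kind is known, which is why the paper does not localize at all: it normalizes $S$ to the form $(g)(cg)((n-b)g)((n-a)g)$ with $1+c=a+b$ and $1<a\le b<c<\frac n2$ (Proposition 2.3), and then hunts for a single multiplier $m$ with $\gcd(m,n)=1$ lying in an interval $\left[\frac{kn}{c},\frac{kn}{b}\right)$ and satisfying $ma<n$ (Lemma 2.4). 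The two-prime-power hypothesis enters only through the elementary fact that among any few consecutive integers at least one is coprime to $n=p_1^{\alpha}p_2^{\beta}$ with $\gcd(n,6)=1$, which powers the interval and gap analysis of Lemmas 2.7--2.10 and the case distinctions of Section 3.

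Your degenerate-case claim is also false as stated. Minimality of $S$ over $\Z_n$ forces $x_j$ into the subgroup of order $k$ only when the single term $a_j$ vanishes modulo $m$; but $\bar S_m$ can fail to be a minimal zero-sum sequence through a proper zero-sum subsequence, e.g.\ $1+x_2\equiv 0$ and $x_3+x_4\equiv 0 \pmod{p_1^{\alpha}}$ with all $a_i\neq 0$, and then no term is confined to any subgroup and the admissible patterns form large parametrized families rather than a ``short list'' dispatchable by $z=\pm 1$ or $z=\pm(1+tm)$. The paper's Example 4.3 (with $n=175$ and $S=(5g)(135g)(77g)(133g)$) is included precisely to demonstrate that the reduction to prime-power data breaks down outside the prime-power case; it is the reason Theorem 1.3 requires the full multiplier-interval machinery rather than an application of the result of Li--Plyley--Yuan--Zeng factor by factor. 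To repair your proposal you would need either a new mechanism coupling the two local solutions, or, as in the paper, to abandon localization in the main case entirely.
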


\bigskip
As applications of Theorem \ref{maintheorem1}, we obtain that the problem has an affirmative answer for the group with order of a product of two primes (Theorem \ref{thm-pq}) as well as for the group of prime power order (Theorem~\ref{theorem for prime power}-- the main result of \cite{LPYZ:10}).

\bigskip

The paper is organized as follows. In next section, we provide some preliminary results, and then a proof of
our main result is given in Section 3. In the last section, we give some applications of  Theorem \ref{maintheorem1}.
\vskip 1 cm

\bigskip

\section{Preliminaries}
\bigskip

 We first list some useful facts and simple results in the following remark. We denote by $|x|_n$ the least positive residue of $x$ modulo $n$, where $n \in \N$ and $x\in \Z$.

\begin{remark}\label{remark}
Let $S = (x_1 g)(x_2g)(x_3g)(x_4g)$  be a minimal zero-sum sequence over  $G$ such that $\mbox{ord}(g)=n =|G|$, and $\ 1 \le x_1 \le x_2 \le x_3 \le x_4 \le n-1$. Then $x_1+x_2+x_3+x_4 =  \nu n,$ where $1 \le \nu \le 3$.

\begin{enumerate}

\item[(0).] To show that $\ind(S)=1$, it suffices to find an integer $m$ with $\gcd(m,n)=1$ such that $|mx_1|_n + |mx_2|_n +|mx_3|_n + |mx_4|_n =n$, and this fact will be frequently used later.  Furthermore, we may always assume that $\nu \ge 2$.

\item[(1).]  It was mentioned in \cite{P:04}  that Problem \ref{problem} was confirmed computationally to hold true if $n \le 1000$ (The claim has been double checked by the second author and Wang by using a computer program). Hence, throughout the paper, we always assume that $n>1000$.

\smallskip
\item[(2).]  If $x_1+x_2+x_3+x_4=3n$, then $|(n-1)x_1|_n + |(n-1)x_2|_n +|(n-1)x_3|_n + |(n-1)x_4|_n = (n-x_1) + (n-x_2) + (n-x_3) + (n-x_4) =n$.  Since $\gcd(n, n-1)=1$, we have $\ind(S)=1$. Thus  we may always assume that   $x_1+x_2+x_3+x_4=2n$.

\smallskip
\item[(3).]  If $x_1 \le x_2 \le x_3 < \frac{n}{2}$, then $x_4= 2n - (x_1+x_2+x_3) > \frac{n}{2}$. Now $|(n-2)x_1|_n + |(n-2)x_1|_n + |(n-2)x_1|_n + |(n-2)x_1|_n =(n- 2x_1) + (n- 2x_2) + (n- 2x_3) + (2n - 2x_4) = n$. Since $\gcd(n, n-2)=1$, we have $\ind(S)=1$.

\smallskip
\item[(4).]  If $x_4 \ge x_3 \ge x_2 > \frac{n}{2}$, then  $x_1< \frac{n}{2}$. Since  $|2x_1|_n + |2x_1|_n + |2x_1|_n + |2x_1|_n = 2x_1 + (2x_2 -n) + (2x_3-n) + (2x_4 -n) = n$ and  $\gcd(n, 2)=1$, we have $\ind(S)=1$.
\end{enumerate}
\end{remark}

\bigskip

Let $S$ be the sequence as described in Theorem \ref{maintheorem1}. By the above remark, we may always assume that $1+x_2+x_3+x_4=2n$ and $ 1<x_2 < n/2 < x_3 \le x_4 < n-1 $. Now let $c=x_2, b=n-x_3, a=n-x_4$, and  it is not hard to show that the following proposition implies Theorem ~ \ref{maintheorem1}.

\begin{proposition}\label{maintheorem}
Let $n=p_1^{\alpha} \cdot p_2 ^{\beta} $, where $p_1 \not = p_2 $ are primes and $ \alpha, \beta \in \N$, and  $\gcd(n,6)=1$. Let $S=(g)(cg)((n-b)g)((n-a)g)$ be a minimal zero-sum sequence over $G$ with $\ord(g)=n$, $1+c=a+b$ and $1 < a \le b < c < \frac{n}{2}.$ Then $\ind(S)=1$.
\end{proposition}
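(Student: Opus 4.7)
The first step is to translate the index condition into a sharp residue identity. For any $m$ coprime to $n$, we have $|m(n-x)|_n = n - |mx|_n$ when $x \in [1, n-1]$, so
$$|m|_n + |mc|_n + |m(n-a)|_n + |m(n-b)|_n \;=\; 2n - \bigl(|ma|_n + |mb|_n - |m|_n - |mc|_n\bigr).$$
The hypothesis $a+b = 1+c$ forces $ma + mb \equiv m + mc \pmod n$, so the integer $|ma|_n + |mb|_n - |m|_n - |mc|_n$ lies in $\{-n, 0, n\}$. Hence $\ind(S) = 1$ is equivalent to producing $m$ with $\gcd(m, n) = 1$ such that
$$|ma|_n + |mb|_n - |m|_n - |mc|_n \;=\; n, \qquad (\star)$$
or equivalently $\lfloor ma/n \rfloor + \lfloor mb/n \rfloor - \lfloor mc/n \rfloor = 1$ when $1 \le m \le n-1$. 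The trivial trials $m = 1, 2, n-1, n-2$ recorded in Remark~\ref{remark} all give carry $0$, so a less obvious multiplier is required.

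Next I would set $P = p_1^{\alpha}$ and $Q = p_2^{\beta}$, so that $n = PQ$ and $\gcd(P, Q) = 1$. By the Chinese Remainder Theorem, the residues $m \bmod P$ and $m \bmod Q$ may be chosen independently among units modulo $P$ and modulo $Q$, providing two degrees of freedom to realize $(\star)$. The guiding heuristic is to arrange for $|mc|_n$ to be small while $|ma|_n$ and $|mb|_n$ are large enough that their sum exceeds $n$ by exactly $|m|_n + |mc|_n$. The case split is governed by the divisibility of $a, b, c$ by $P$ and $Q$. When $P \mid c$, writing $c = Pc'$ and selecting $m \equiv (c')^{-1} \pmod Q$ makes $|mc|_n$ a controlled multiple of $P$, and the freedom in $m \bmod P$ is used to steer $|ma|_n$ and $|mb|_n$ into the desired range. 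The case $Q \mid c$ is symmetric, and when $\gcd(c, n) = 1$ one tries $m$ close to $n/c$ and tunes its CRT components to land $|ma|_n$ and $|mb|_n$ correctly. The assumption $\gcd(n,6) = 1$ excludes the small primes $2$ and $3$ that would obstruct several of these CRT choices, and the reduction $n > 1000$ in Remark~\ref{remark} removes finite small exceptions.

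The main obstacle is the size control implicit in $(\star)$: the congruence $ma + mb \equiv m + mc \pmod n$ is automatic, but whether $|ma|_n + |mb|_n - |m|_n - |mc|_n$ equals $-n$, $0$, or $+n$ depends on the actual magnitudes of the four residues, not just their classes. Translating a CRT specification of $m$ into information about $m$ as an integer in $[1, n-1]$, and hence about the sizes $|mx|_n$, is indirect, and each sub-case must be verified to land in the ``carry $+1$'' regime rather than one of the two trivial regimes. Handling the boundary configurations -- $a$ near $1$, $c$ near $n/2$, or $a, b, c$ sharing further arithmetic structure with $P$ and $Q$ -- will likely require several auxiliary sub-cases, and the prime-power result of \cite{LPYZ:10} may be invoked directly or as a model for the single-prime strategy within the CRT argument.
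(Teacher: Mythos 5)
Your reduction of $\ind(S)=1$ to the carry identity $(\star)$ is correct, and it is in substance the paper's own starting point (Remark~\ref{remark}(0) together with $|m(n-x)|_n = n - |mx|_n$); but everything after that is a plan rather than a proof, and the plan's central device does not engage the actual difficulty. Choosing $m \bmod p_1^{\alpha}$ and $m \bmod p_2^{\beta}$ independently is, by CRT, nothing more than choosing the class of $m$ modulo $n$ — which is the original problem restated — and, as you yourself concede, it gives no control over the archimedean sizes $|ma|_n$, $|mb|_n$, $|m|_n$, $|mc|_n$ that determine whether the carry is $-n$, $0$, or $+n$. Even your one concrete instance ($p_1^{\alpha} \mid c$, $m \equiv (c')^{-1} \pmod{p_2^{\beta}}$, which does pin $|mc|_n = p_1^{\alpha}$) still leaves you needing $|ma|_n + |mb|_n = n + |m|_n + p_1^{\alpha}$ with only the class of $m \bmod p_1^{\alpha}$ free and no argument that any choice lands there. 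Since no such mechanism is supplied in any of your cases, the quantitative core of the proof is missing in its entirety.

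For comparison, the paper controls sizes from the outset rather than residue classes: by Lemma~\ref{mainlemma}(1) it suffices to find $k \le b$ and $m \in [\frac{kn}{c}, \frac{kn}{b}]$ with $\gcd(m,n)=1$ and $ma < n$, since then $|m|_n + |mc|_n + |m(n-b)|_n + |m(n-a)|_n \le m + (mc-kn) + (kn-mb) + (n-ma) = n$, forcing equality. The hypothesis $n = p_1^{\alpha}p_2^{\beta}$ with $\gcd(n,6)=1$ enters not through CRT steering but through the elementary observation that among any three consecutive integers at most one is divisible by $p_1$ and at most one by $p_2$ (both $\ge 5$), so any interval containing three integers contains one coprime to $n$. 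The real work is then a lengthy interval-counting analysis: bounding $s = \lfloor b/a \rfloor \le 7$ via Lemmas~\ref{about-s} and \ref{small-s}, treating $a=2$ separately in Lemma~\ref{small-a}, introducing the parameter $k_1$ attached to the intervals $[\frac{kn}{c}, \frac{kn}{b})$, and dispatching the residual configurations ($m_1 = 5$, $m_1 = 7$, $k_1 = 2, 3$, etc.) by explicit inequalities such as $ma \le (\ell m_1 + 2)a < n$. Your sketch, while pointing in a vaguely similar direction in the $\gcd(c,n)=1$ case ($m$ near $n/c$ is indeed where the paper looks), neither anticipates nor replaces any of this; the proposal as written does not prove the proposition.
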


For  any real numbers $a < b \in \mathbb R$, we set $[a, b] = \{ x \in \mathbb Z \mid a \le x \le b\}$ the set of all integers between $a$ and $b$, and similarly, set $[a, b) = \{ x \in \mathbb Z \mid a \le x < b\}$. From now  (until the end of the next section)  we always assume that  $S$ is the sequence as described  in Proposition \ref{maintheorem}. Next we  give a  crucial  lemma.

\medskip
\begin{lemma}\label{mainlemma}
Proposition \ref{maintheorem} holds if one of the following conditions holds\,{\rm :}
\begin{enumerate}
\item[(1).] There exist positive integers $k, m$ such that $\frac{kn}{c} \le m \le\frac{kn}{b}, \ \gcd(m,n)=1, \ 1 \le k \le b,$ and $ma < n$.

\smallskip
\item[(2).] There exists a positive integer $M \in \left [1, \frac{n}{2}\right]$ such that $ \gcd(M,n)=1$ and at least two of the following inequalities hold\,{\rm :}

      $|Ma|_n > \frac{n}{2}, \   |Mb|_n > \frac{n}{2}, \ |Mc|_n < \frac{n}{2}$.
\end{enumerate}
\end{lemma}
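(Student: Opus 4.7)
The plan is to produce, in each situation, an integer $m$ coprime to $n$ with $0<m<n$ and
\[
\phi(m)\;:=\;\lfloor mc/n\rfloor-\lfloor ma/n\rfloor-\lfloor mb/n\rfloor\;=\;1,
\]
which by Remark~\ref{remark}(0) suffices for $\ind(S)=1$. Indeed, using $|m(n-x)|_n=n-|mx|_n$ for $x\in\{a,b\}$ and expanding $|mx|_n=mx-n\lfloor mx/n\rfloor$, the identity $m(1+c)=m(a+b)$ collapses the sum $|m|_n+|mc|_n+|m(n-a)|_n+|m(n-b)|_n$ to $n(2-\phi(m))$, which equals $n$ exactly when $\phi(m)=1$.

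For part~(1), I take the $m$ supplied by the hypothesis. Then $m<n$ (because $m\le kn/b\le n$ and $\gcd(m,n)=1$ exclude $m=n$), $\lfloor ma/n\rfloor=0$ (from $ma<n$), $\lfloor mc/n\rfloor\ge k$ (from $m\ge kn/c$), and $\lfloor mb/n\rfloor\le k-1$ (from $m\le kn/b$ together with $\gcd(m,n)=1$ ruling out $mb=kn$, since $n\nmid b$). Hence $\phi(m)\ge 1$. Conversely, $c-b=a-1$ gives $mc-mb=m(a-1)<ma<n$, which forces $\lfloor mc/n\rfloor-\lfloor mb/n\rfloor\le 1$. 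Combining, $\phi(m)=1$.

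For part~(2), set $a'=|Ma|_n$, $b'=|Mb|_n$, $c'=|Mc|_n$, and $\delta_x=\mathbf{1}_{x'>n/2}$ (unambiguous since $n$ is odd). The identity $M(1+c)=M(a+b)$ yields $a'+b'-M-c'=n\,\phi(M)$, and since $M,a',b',c'\in[1,n-1]$ this quantity lies in $(-2n,2n)$, so $\phi(M)\in\{-1,0,1\}$. I then split by which of the three listed inequalities hold. If all three hold, $a'+b'>n>M+c'$ forces $\phi(M)=1$, so $m=M$ works. If exactly two hold, refined bounds on $a'+b'$ and $M+c'$ (depending on which subcase we are in, using that each residue lies in the appropriate half of $[1,n-1]$) confine $\phi(M)$ to $\{0,1\}$: if $\phi(M)=1$, take $m=M$; otherwise take $m=n-2M\in[1,n-1]$, which is coprime to $n$ because $\gcd(n,6)=1$ gives $\gcd(2,n)=1$ and $\gcd(M,n)=1$. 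A short calculation using $\lfloor(n-2M)x/n\rfloor=x-2\lfloor Mx/n\rfloor-1-\delta_x$ for $x\in\{a,b,c\}$ produces the clean formula
\[
\phi(n-2M)\;=\;\delta_a+\delta_b-\delta_c-2\,\phi(M),
\]
and in each of the three "exactly two" subcases one checks directly that $\delta_a+\delta_b-\delta_c=1$, so $\phi(M)=0$ yields $\phi(n-2M)=1$, completing the argument.

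The main obstacle I anticipate is the subcase bookkeeping in part~(2): for each of the three sign patterns in which exactly two of the inequalities hold, one must verify both the confinement $\phi(M)\in\{0,1\}$ (from the refined bounds on $a'+b'$ and $M+c'$) and the arithmetic identity $\delta_a+\delta_b-\delta_c=1$. Once the telescoping formula above for $\phi(n-2M)$ is derived, however, each subcase reduces to a single line of arithmetic, so the real work is the residue analysis feeding into those formulas.
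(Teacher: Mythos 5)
Your proof is correct and follows essentially the same route as the paper: part~(1) is the paper's own inequality chain $|m|_n+|mc|_n+|m(n-b)|_n+|m(n-a)|_n\le m+(mc-kn)+(kn-mb)+(n-ma)=n$, merely rephrased through the floor-count $\phi$, and part~(2) unwinds what the paper does implicitly, since your multiplier $n-2M\equiv -2M \pmod n$ is exactly the composite of multiplying by $M$ and then invoking Remark~2.1(3) with the multiplier $n-2$ (the paper's observation that at least three of the four residues lie below $n/2$ corresponds to your confinement $\phi(M)\in\{0,1\}$). The only difference is expository: you verify the residue arithmetic explicitly where the paper cites Remark~2.1(3), so there is no gap.
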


\begin{proof}
(1). If $ \gcd(m,n)=1$ and $ 1 \le k \le b,$ we conclude that $\frac{kn}{c} < m < \frac{kn}{b}.$ Since $|m|_n + |mc|_n + |m(n-b)|_n + |m(n-a)|_n \le m + (mc-kn) + (kn-mb) + (n-ma) = n$, we have $\ind(S)=1$.

(2). It follows that at least three elements of $\{  |M|_n,  \  |Mc|_n ,\ |M(n-b)|_n ,\ |M(n-a)|_n \}$ are less than $\frac{n}{2}$. By Remark \ref{remark} (3), we have  $\ind(S)=1$.
\end{proof}

\medskip
As a consequence of Lemma \ref{mainlemma}, we have the following easy  observation.

\medskip
\begin{lemma}\label{observation}
If there exist integers  $k$ and $m$ such that $\frac{kn}{c} \le m \le \frac{kn}{b}, \ \gcd(m,n)=1$ and $a \le \frac{b}{k}$, then Proposition \ref{maintheorem} holds.
\end{lemma}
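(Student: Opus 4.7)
The plan is to reduce directly to Lemma \ref{mainlemma}(1), since the hypotheses here are very close to those of that lemma. What I must check are precisely the two conditions of part (1) of the main lemma that are not already assumed: namely $1 \le k \le b$ and $ma < n$.

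First I would verify the bounds on $k$. Since the hypothesis of Proposition \ref{maintheorem} gives $1 < a \le b$, so in particular $a \ge 2$, the assumption $a \le b/k$ yields $k \le b/a \le b/2$, which is certainly within $[1,b]$ (and $k \ge 1$ is built into Lemma \ref{mainlemma}(1), which asks for $k$ a positive integer). This step is genuinely routine; the point is simply that the assumption $a > 1$ is what powers the inequality $k \le b$.

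Next I would bound $ma$. From $m \le \frac{kn}{b}$ and $a \le \frac{b}{k}$, multiplying gives $ma \le n$. To upgrade this to strict inequality, suppose $ma = n$. Then $m \mid n$, but $\gcd(m,n)=1$ forces $m = 1$, and then $a = n$, contradicting $a < n/2$ from Proposition \ref{maintheorem}. Hence $ma < n$.

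Having verified both missing conditions (and with $\frac{kn}{c} \le m \le \frac{kn}{b}$ and $\gcd(m,n)=1$ coming for free from the hypothesis), Lemma \ref{mainlemma}(1) applies and gives $\ind(S) = 1$, which is exactly the conclusion of Proposition \ref{maintheorem}. The main ``obstacle'' is really just the strict inequality $ma < n$, which is handled by the coprimality $\gcd(m,n)=1$ combined with $a < n/2$; there is no deeper difficulty.
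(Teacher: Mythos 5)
Your proof is correct and follows essentially the paper's own route: reduce to Lemma \ref{mainlemma}(1) by multiplying $m \le \frac{kn}{b}$ against $a \le \frac{b}{k}$ to bound $ma$. The only cosmetic difference is how strictness of $ma < n$ is secured: the paper observes $m < \frac{kn}{b}$ must be strict (equality $mb = kn$ with $\gcd(m,n)=1$ would force $n \mid b$, impossible since $b < \frac{n}{2}$), whereas you keep $ma \le n$ and exclude equality via $m \mid n$, $\gcd(m,n)=1$, $m=1$, $a=n$ --- both are one-line uses of the same coprimality hypothesis, and your explicit check that $k \le b$ (from $a \ge 2$) only makes explicit what the paper leaves implicit.
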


\begin{proof}
Note that $m < \frac{kn}{b}$. Since  $ma < \frac{kn}{b} \time \frac{b}{k}=n$, the result follows from Lemma \ref{mainlemma} (1).
\end{proof}

\bigskip

In what follows, we assume that $s=\lfloor\frac{b}{a}\rfloor$. Then we have $ s\le \frac{b}{a} < s+1$ and
\begin{equation*}
\frac{n}{2a} < \frac{(s+1)n}{2b} < \cdots  < \frac{(2s-1)n}{2b} < \frac{sn}{b} \le \frac{n}{a}.
\end{equation*}
Since $b < \frac{n}{2}$, we have $\frac{n}{2b} =\frac{(2s-t)n}{2b}- \frac{(2s-t-1)n}{2b} > 1$, and then $[\frac{(2s-t-1)n}{2b}, \frac{(2s-t)n}{2b}]$ contains at least one integer for every $t \in [0, s-1]$.

Now we are ready to  give two sufficient conditions for  Proposition \ref{maintheorem} to hold. The first is  ``$s\ge 8$" (which follows from Lemmas \ref{about-s} and \ref{small-s})  and the other is ``$a=2$" (Lemma \ref{small-a}).

\bigskip

\begin{lemma}\label{about-s}
Suppose $s \ge 2$ and  $[\frac{(2s-2t-1)n}{2b}, \frac{(s-t)n}{b}]$ contains an integer  co-prime to  $n$ for some  $t \in [0, \lfloor \frac{s}{2} \rfloor-1 ]$. Then Proposition \ref{maintheorem} holds.
\end{lemma}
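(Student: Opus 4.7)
The plan is to invoke Lemma \ref{mainlemma}(2) with $M$ equal to the coprime integer $m$ supplied by hypothesis, setting $k = s - t$ so that $m \in [\frac{(2k-1)n}{2b}, \frac{kn}{b}]$. I will verify that $m \in [1, n/2]$ and that two of the three listed inequalities hold, namely $|mb|_n > n/2$ and $|ma|_n > n/2$.

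First I would record the ambient bounds. Since $k \le s \le b/a \le b/2$ (using $a \ge 2$), we have $m \le kn/b \le n/2$, and $n$ being odd (a consequence of $\gcd(n,6)=1$) upgrades this to strict inequality. Also $ka \le sa \le b$ gives $ma \le n$, and $\gcd(m,n) = 1$ together with $a < n$ excludes $ma = n$, so $ma < n$. From the interval I read off $mb \in [(k-\frac{1}{2})n, kn]$; the upper endpoint is unattainable since $mb = kn$ combined with $\gcd(m,n)=1$ would force $n \mid b$, contradicting $b < n$. Writing $mb = (k-1)n + R$ with $R \in [\frac{n}{2}, n)$, and using odd $n$ to exclude $R = n/2$, I conclude $|mb|_n = R > n/2$.

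The key inequality is $|ma|_n > n/2$, which reduces to $(2k-1)a > b$. Here I would leverage the restriction $t \le \lfloor s/2 \rfloor - 1$: for $s$ even this yields $k \ge s/2 + 1$ and hence $2k-1 \ge s+1$, while for $s$ odd it yields $k \ge (s+3)/2$ and hence $2k-1 \ge s+2$; in either case $2k-1 \ge s+1$. Combined with $b < (s+1)a$ (from $s = \lfloor b/a \rfloor$), this gives $(2k-1)a \ge (s+1)a > b$, so $ma \ge \frac{(2k-1)an}{2b} > \frac{n}{2}$. Since $ma < n$ was already established, $|ma|_n = ma > n/2$.

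With $M = m \in [1, n/2]$ coprime to $n$ and satisfying both $|mb|_n > n/2$ and $|ma|_n > n/2$, Lemma \ref{mainlemma}(2) delivers Proposition \ref{maintheorem}. The main obstacle is securing the inequality $(2k-1)a > b$, which is exactly the point at which the strict restriction $t \le \lfloor s/2 \rfloor - 1$ becomes essential; the weaker bound $t \le s - 1$ that merely guarantees an integer in the interval would not be enough to push $ma$ past $n/2$.
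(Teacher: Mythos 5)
Your proof is correct and takes essentially the same route as the paper: with $M=m$ (the paper keeps $t$ where you write $k=s-t$) you verify $M\le \frac{n}{2}$, $|Ma|_n>\frac{n}{2}$ and $|Mb|_n>\frac{n}{2}$ via the same key inequality $(2s-2t-1)a\ge (s+1)a>b$ coming from $t\le\lfloor\frac{s}{2}\rfloor-1$ and $s=\lfloor\frac{b}{a}\rfloor$, and then invoke Lemma \ref{mainlemma}(2). Your endpoint and strictness checks (using $\gcd(m,n)=1$ and the oddness of $n$) are just a slightly more careful rendering of the strict inequalities the paper asserts directly.
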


\begin{proof}
Suppose $M \in [\frac{(2s-2t-1)n}{2b}, \frac{(s-t)n}{b}]$ for some $t \in [0, \lfloor \frac{s}{2} \rfloor-1 ]$ and  $\gcd(M, n)=1$. Then $M < \frac{sn}{b} \le \frac{n}{a}\le\frac{n}{2}$. Note that $ \frac{n}{2a} < \frac{(2s-2t-1)n}{2b} < M < \frac{(s-t)n}{b} \le \frac{n}{a} $. Hence $|Ma|_n > \frac{n}{2}$. Also, since $\frac{(2s-2t-1)n}{2b} < M < \frac{(s-t)n}{b}$, we have $\frac{(2s-2t-1)n}{2} < Mb < (s-t)n$, so $|Mb|_n > \frac{n}{2}$. It follows from Lemma \ref{mainlemma} (2) that Proposition \ref{maintheorem} holds.
\end{proof}

\medskip
\begin{lemma}\label{small-s}
Suppose $s \ge 2$ and $[\frac{(2s-2t-1)n}{2b}, \frac{(s-t)n}{b}]$ contains no integers  co-prime to  $n$ for every  $t \in [0, \lfloor \frac{s}{2} \rfloor-1 ]$. Then the following results hold.
\begin{enumerate}

\item[(i).] $\frac{n}{2b} < 3$ (where $\frac{n}{2b}$ is the length of the interval  $[\frac{(2s-t_1-1)n}{2b}, \frac{(2s-t_1)n}{2b}]$ for each $t_1 \in [0, s-1]$).

\smallskip
\item[(ii).]  If $s \ge 4$, then
$[\frac{(2s-2t-1)n}{2b}, \frac{(s-t)n}{b}]$  contains exactly one integer for every  $t \in [0, \lfloor \frac{s}{2} \rfloor-1 ]$. Furthermore, $\frac{n}{2b} < 2$.

\smallskip
\item[(iii).]  Suppose that  $s \ge 4$, $x \in [\frac{(2s-2t-1)n}{2b}, \frac{(s-t)n}{b}]$  and $y \in [\frac{(2s-2t-3)n}{2b}, \frac{(s-t-1)n}{b}]$ for some $t \in [0 , \lfloor \frac{s}{2} \rfloor-2]$. Then  $\gcd(x , y,  n) = 1 $.

\smallskip
\item[(iv).]  Suppose that $s \ge 6$, $x \in [\frac{(2s-2t-1)n}{2b}, \frac{(s-t)n}{b}]$ and  $z \in [\frac{(2s-2t-5)n}{2b}, \frac{(s-t-2)n}{b}]$ for some $t \in [0 , \lfloor \frac{s}{2} \rfloor-3]$. Then $\gcd (x,z,n)>1 $ and $5 \mid \gcd (x, z, n)$. Furthermore, $z = x-5$ and $\frac{n}{2b} < \frac{7}{5}$.

\smallskip
\item[(v).]  $s \le 7.$
\end{enumerate}
\end{lemma}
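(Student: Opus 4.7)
The plan is to prove (v) by contradiction using parts (ii), (iii) and (iv) as black boxes. Suppose $s\ge 8$. Since $\lfloor s/2\rfloor-1\ge 3$, part (ii) guarantees that each of the four intervals
\[
I_t:=\left[\tfrac{(2s-2t-1)n}{2b},\tfrac{(s-t)n}{b}\right]\quad(t=0,1,2,3)
\]
contains exactly one integer, which I will denote $x_0,x_1,x_2,x_3$.

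Next I would apply part (iv) twice. Taking $t=0$ is legal because $s\ge 6$ gives $0\in[0,\lfloor s/2\rfloor-3]$, and yields $5\mid \gcd(x_0,x_2,n)$; taking $t=1$ is legal because $s\ge 8$ gives $1\in[0,\lfloor s/2\rfloor-3]$, and yields $5\mid \gcd(x_1,x_3,n)$. In particular $5\mid n$, $5\mid x_0$ and $5\mid x_1$, so $5\mid \gcd(x_0,x_1,n)$. However, part (iii) with $t=0$ (legal since $s\ge 4$) asserts $\gcd(x_0,x_1,n)=1$, a contradiction. Therefore $s\le 7$.

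There is essentially no obstacle here: the two previous parts already set up incompatible divisibility data for adjacent intervals, and (v) just records that this incompatibility is triggered as soon as one has four consecutive intervals $I_0,I_1,I_2,I_3$, i.e., as soon as $s\ge 8$. The only small point to be careful about is that the index range in (iv) reads $t\in[0,\lfloor s/2\rfloor-3]$, which is why one needs $s\ge 8$ (not merely $s\ge 6$) in order to use it at both $t=0$ and $t=1$ simultaneously.
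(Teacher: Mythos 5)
Your argument for part (v) is correct, and modulo cosmetics it is exactly the paper's own: the paper likewise assumes $s\ge 8$, takes integers $x,y,z,w$ in the four consecutive intervals $t=0,1,2,3$ (each containing exactly one integer by (ii)), applies (iv) twice to get $5\mid x$ and $5\mid y$, and then derives the contradiction from $2\le x-y\le 3$; you route the same contradiction through the conclusion $\gcd(x_0,x_1,n)=1$ of (iii) instead, which is an equivalent finish. Your index bookkeeping is also right: applying (iv) at $t=1$ requires $1\le\lfloor\frac{s}{2}\rfloor-3$, i.e.\ $s\ge 8$, which is precisely why the lemma's conclusion is $s\le 7$ rather than $s\le 5$.

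The genuine gap is that the statement to be proved is the full five-part lemma, and your proposal establishes only (v) while explicitly taking (ii), (iii), (iv) ``as black boxes.'' Those parts are constituents of the statement itself, so they cannot be assumed, and they carry essentially all of the arithmetic content. Concretely: (i) uses that $n=p_1^{\alpha}p_2^{\beta}$ with $\gcd(n,6)=1$, so among any three consecutive integers at least one is coprime to $n$, whence each interval contains at most two integers and $\frac{n}{2b}<3$; (ii) is the delicate step --- if some interval contained consecutive integers $x,x+1$, then every $z\in[x-3,x-1]\cup[x+2,x+4]$ is coprime to $n$ and must avoid the whole family of intervals, and the paper gets a contradiction separately for $t=0$ and $t\ge 1$ by forcing $x-4,x-5$ (resp.\ $x+5,x+6$) into a neighbouring interval, so that four nearby integers all share a factor with $n$, impossible for a product of two prime powers coprime to $6$; (iii) needs the bound $2\le x-y\le 3$, which itself depends on the uniqueness and length bound from (ii), combined with $\gcd(n,6)=1$; and (iv) needs a pigeonhole on the two prime divisors of $n$ (using (iii) twice) to force $\gcd(x,z,n)>1$, then $3\le x-z\le 6$ together with $\gcd(n,6)=1$ forces $x-z=5$ and $5\mid\gcd(x,z,n)$, after which counting the six integers in $\left[\frac{(2s-5)n}{2b},\frac{sn}{b}\right]$ yields $\frac{n}{2b}<\frac{7}{5}$. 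Until you supply proofs of (i)--(iv), your argument is a correct derivation of (v) conditional on the rest of the lemma, not a proof of the lemma.
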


\begin{proof}
(i). Since $[\frac{(2s-2t-1)n}{2b}, \frac{(s-t)n}{b}]$ contains no integers  co-prime to  $n$ for every  $t \in [0, \lfloor \frac{s}{2} \rfloor-1 ]$ and $n=p_1^{\alpha} \cdot p_2^{\beta}$, we have that $[\frac{(2s-2t-1)n}{2b}, \frac{(s-t)n}{b}]$ contains at most two integers and hence $\frac{n}{2b} = \frac{(s-t)n}{b} - \frac{(2s-2t-1)n}{2b}   < 3.$

Next, we assume that $s\ge 4$, so $\lfloor \frac{s}{2} \rfloor-1 \ge 1.$

(ii). Assume to the contrary that $[\frac{(2s-2t-1)n}{2b}, \frac{(s-t)n}{b}]$ contains two integers, say $x, x+1$, for some $t \in [0, \lfloor \frac{s}{2} \rfloor-1 ]$.  Then $\gcd(x, n)>1$ and $ \gcd(x+1, n)>1 $. Since $n=p_1^{\alpha} \cdot p_2^{\beta}$ and $\gcd(n, 6)=1$,  for every $z \in [x-3, x-1] \cup [x+2, x+4]$ we have $\gcd (z, n)=1$. Hence for every $t \in [0, \lfloor \frac{s}{2} \rfloor-1 ]$
$$\big ([x-3, x-1] \cup [x+2, x+4] \big) \cap [\frac{(2s-2t-1)n}{2b}, \frac{(s-t)n}{b}] = \emptyset.$$
If $t=0$, then $[x-3, x-1] \subset [\frac{(s-1)n}{b}, \frac{(2s-1)n}{2b}]$. Then $\frac{n}{2b} = \frac{(2s-1)n}{2b} - \frac{(s-1)n}{b} >2$. Hence $[\frac{(2s-3)n}{2b}, \frac{(s-1)n}{b}]$ contains at least two integers. It follows from (i) that $[\frac{(s-1)n}{b}, \frac{(2s-1)n}{2b}]$ contains at most three integers, so we have that $x-4, x-5 \in [\frac{(2s-3)n}{2b}, \frac{(s-1)n}{b}]$. Then  $\gcd(x-4, n)>1$ and $ \gcd(x-5, n)>1$, which together with $\gcd(x, n)>1$ and $ \gcd(x+1, n)>1 $ yield a contradiction to the assumption  that $n=p_1^{\alpha} \cdot p_2^{\beta}$ and $\gcd(n, 6)=1$. If $t \ge 1$, similarly, we can show that $x+5, x+6 \in [\frac{(2s-2t+1)n}{2b}, \frac{(s-t+1)n}{b}]$ and thus $\gcd(x+5, n)>1$ and $ \gcd(x+6, n)>1$, which yield a contradiction again.
Hence, $[\frac{(2s-2t-1)n}{2b}, \frac{(s-t)n}{b}]$  must contain exactly one integer for every  $t \in [0, \lfloor \frac{s}{2} \rfloor-1 ]$, and therefore, $\frac{n}{2b} = \frac{(s-t)n}{b} - \frac{(2s-2t-1)n}{2b} < 2$.

(iii). Since the length of $[\frac{(s-t)n}{b}, \frac{(2s-2t+1)n}{2b}]$ is  less than $2$ by (ii), this interval contains at most two integers for each $t \in [1, \lfloor \frac{s-1}{2} \rfloor]$. Since  $x \in [\frac{(2s-2t-1)n}{2b}, \frac{(s-t)n}{b}]$  and $y \in [\frac{(2s-2t-3)n}{2b}, \frac{(s-t-1)n}{b}]$,  we have $\gcd (x, n) >1$, and $\gcd(y, n)>1$. Note that $2 \le x-y \le 3$ and $\gcd(n , 6)=1$. We infer that $\gcd(n , x-y)=1$ and thus $\gcd(x,y,n)=1$. This proves (iii).

(iv). Assume that $s \ge 6$, and then $\lfloor \frac{s}{2} \rfloor-1 \ge 2.$ Assume that  $x \in [\frac{(2s-2t-1)n}{2b}, \frac{(s-t)n}{b}], \ y \in [\frac{(2s-2t-3)n}{2b}, \frac{(s-t-1)n}{b}]$ and $z \in [\frac{(2s-2t-5)n}{2b}, \frac{(s-t-2)n}{b}]$. Then  $\gcd (x, n) >1, \ \gcd(y, n) > 1$ and $\gcd(z, n) > 1$. By (iii) we have $\gcd(x,  y, n)=\gcd(y, z, n)=1$. Since $n=p_1^{\alpha} \cdot p_2^{\beta}$, we have that $\gcd(x, z, n)  = p_1^{\alpha_1}$ or $p_2^{\beta_1}$, where $1 \le \alpha_1 \le \alpha$ and $1 \le \beta_1 \le \beta$. Since $\gcd ( n, 6)=1$ and $3 \le x-z \le 6$, we have that $\gcd(x-z,  n) = x-z = 5$ and thus $5 \mid \gcd(x, z, n)$. Note that $[\frac{(2s-5)n}{2b}  , \frac{sn}{b}]$ contains exactly $6$ integers, so we infer that $ \frac{sn}{b} - \frac{(2s-5)n}{2b} < 7$. Hence $\frac{n}{2b} < \frac{7}{5}$, proving (iv).

(v). Assume to the contrary that $s \ge 8$. Then $\lfloor \frac{s}{2} \rfloor-1 \ge 3.$ Assume that  $x \in [\frac{(2s-2t-1)n}{2b}, \frac{(s-t)n}{b}], $ $\, \ y \in [\frac{(2s-2t-3)n}{2b}, \frac{(s-t-1)n}{b}], \ z \in [\frac{(2s-2t-5)n}{2b}, \frac{(s-t-2)n}{b}]$ and $w \in [\frac{(2s-2t-7)n}{2b}, \frac{(s-t-3)n}{b}]$. By (iv) we have $5 \mid x$ and $5 \mid y$, which is impossible since $2 \le x -y \le 3$.

This completes the proof.
\end{proof}

\bigskip
\begin{lemma}\label{small-a}
Proposition \ref{maintheorem} holds if $a=2$.
\end{lemma}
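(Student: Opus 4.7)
The plan is to dichotomize on the size of $b$, using the machinery already established for large $b$ and a single application of Lemma~\ref{mainlemma}(1) for the remaining small values. Since $a=2$, we have $s=\lfloor b/a\rfloor=\lfloor b/2\rfloor$, so the condition $s\ge 8$ is equivalent to $b\ge 16$. Combining Lemma~\ref{about-s} with Lemma~\ref{small-s}(v), whenever $s\ge 2$ there is a clean dichotomy: either some interval $[\frac{(2s-2t-1)n}{2b},\frac{(s-t)n}{b}]$ contains an integer coprime to $n$ and the proposition follows from Lemma~\ref{about-s}, or else $s\le 7$. Hence for $b\ge 16$ the claim is immediate, and only $b\in\{2,3,\dots,15\}$ needs attention.

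For these remaining values I would apply Lemma~\ref{mainlemma}(1) with $k=1$. The relevant interval is $[\frac{n}{c},\frac{n}{b}]=[\frac{n}{b+1},\frac{n}{b}]$, whose length is $\frac{n}{b(b+1)}\ge\frac{n}{240}>4$, since $n>1000$ by Remark~\ref{remark}(1). Therefore the interval contains at least four consecutive integers. Because $\gcd(n,6)=1$ forces $p_1,p_2\ge 5$, among any four consecutive integers at most one is divisible by $p_1$ and at most one by $p_2$, so at least two of them are coprime to $n$. Any such $m$ is a valid choice for Lemma~\ref{mainlemma}(1): clearly $1\le k=1\le b$, and for $b\ge 3$ we have $ma=2m\le 2n/b\le 2n/3<n$, while for $b=2$ the oddness of $n$ (from $\gcd(n,6)=1$) forces $m\le(n-1)/2$, giving $2m\le n-1<n$ as well. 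Lemma~\ref{mainlemma}(1) then delivers $\ind(S)=1$.

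The only place I foresee needing care is the step that converts the length bound $\frac{n}{b(b+1)}>4$ into a guarantee of at least four consecutive integers in the interval (a routine floor/ceiling estimate); once that is in hand, the counting argument using $p_1,p_2\ge 5$ is a one-liner and no per-$b$ case analysis is necessary.
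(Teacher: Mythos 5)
Your proof is correct, but the second half takes a genuinely different and shorter route than the paper's. Both arguments begin identically: the reduction via Lemmas \ref{about-s} and \ref{small-s}(v) to $s\le 7$, which with $a=2$ gives $b\le 15$, is exactly the paper's first step. From there the paper splits on the parity of $b$: for $b=2t$ it exhibits the explicit multiplier $m=\frac{n-1}{2}$ and computes $|m|_n+|m(b+1)|_n+|m(n-b)|_n+|m(n-2)|_n=m+(m-t)+t+1=n$ directly; for $b=2t+1$ it first transforms $S$ by $m_1=\frac{n-1}{2}$ into a new sequence $S'$ with parameters $a'=t+1$, $b'=\frac{n-b}{2}$, $c'=\frac{n-1}{2}$, then locates a suitable multiplier among $\{2k'+1,2k'+3,2k'+5\}$ with $k'=\lceil\frac{n-b}{2b}\rceil$, verifying $ma'<n$ via the division $n=rb+b_0$ and $r>60$. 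You instead exploit the observation that $a=2$ forces $c=b+1$, so for $b\le 15$ the single interval $[\frac{n}{b+1},\frac{n}{b}]$ already has length $\frac{n}{b(b+1)}\ge\frac{n}{240}>4$, and a block of four consecutive integers cannot be wholly covered by multiples of the two primes $p_1,p_2\ge 5$; then Lemma \ref{mainlemma}(1) with $k=1$ finishes (your checks of $ma<n$, including the $b=2$ case via oddness of $n$, are all valid, and $m=\frac{kn}{c}$ or $\frac{kn}{b}$ is excluded automatically by $\gcd(m,n)=1$). Indeed you could compress this further by citing Lemma \ref{observation} with $k=1$, since $a\le b$ holds unconditionally, which removes even the $b=2$ versus $b\ge 3$ split. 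What each approach buys: yours is uniform in the parity of $b$, avoids the sequence transformation entirely, and is self-evidently correct from the stated lemmas, at the cost of leaning on the computational hypothesis $n>1000$ of Remark \ref{remark}(1) (you need $n>4b(b+1)$, i.e.\ $n>960$ at worst); the paper's even-$b$ branch, by contrast, is an identity valid for all $n$ with no size assumption, though its odd-$b$ branch uses $n>1000$ anyway, so neither proof is assumption-free overall.
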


\begin{proof}
Note that $S=g((b+1)g)((n-b)g)((n-2)g).$ By Lemmas \ref{about-s} and \ref{small-s},  we may assume that $s \le 7$, and thus $b < (s+1)a \le 16$. Let $n=rb+b_0$, where $0\le b_0 \le  b-1$. Then $ n < (r+1)b$ and thus $r > \frac{n}{b} - 1 \ge \frac{1000}{16}-1 > 60$.

If $b=2t$, then let $m=(n-1)/2$, and clearly $\gcd(m,n)=1 $. Since  $|m|_n+|m(b+1)|_n+|m(n-b)|_n+|m(n-2)|_n=m+(m-t)+t+1=n$, we have $\ind(S)=1$.

If  $b=2t+1$, since $b\ge a=2$, we have  $1\le t<\frac{b}{2}<\frac{n}{4}$. Take $m_1=\frac{n-1}{2}$. Then $\{|m_1|_n, |m_1(n-2)|_n, |m_1(n-b)|_n, |m_1(b+1)|_n\}=\{1, \frac{n-1}{2}, n- \frac{n-b}{2}, n-t-1\}$, and $1 < t+1 < \frac{n-b}{2} < \frac{n-1}{2}$. Let $S'=g(c'g)((n-b')g)((n-a')g)$, where $a'=t+1, \ b'= \frac{n-b}{2}$ and $c'= \frac{n-1}{2}$. Since $\gcd(m_1, n)=1$ we have  $\ind(S)=\ind(S')$, and we shall show that $\ind(S')=1$.

Take $k'=\lceil\frac{n-b}{2b}\rceil$. It is easy to verify that $\frac{(k'+i)n}{c'} \le 2(k'+i)+1 \le \frac{(k'+i)n}{b'}$ for each $i \in [0,  2]$. Since $\gcd(n, 6)=1$ and $n=p_1^{\alpha} \cdot p_2^{\beta},$ we have  $\gcd(2(k'+i)+1,n)=1$ for some $i\in [0, 2]$. Now let $m$ be one of the integers in $\{2k'+1, 2k'+3, 2k'+5\}$ which is co-prime to $n$ and let $k=\frac{m-1}{2}$. Then $k \le b'$. We shall show that $ma' < n$, and then the result follows from Lemma \ref{mainlemma} (1).

If $r \equiv 1 \pmod 2$, then  $k'=\lceil\frac{n-b}{2b}\rceil=\frac{r+1}{2}$. Since $r > 60$, we have that $ma' \le (2k'+5)(t+1) = (r+6)(t+1) < r(2t+1)+b_0$ and we are done.

If $r \equiv 0 \pmod 2$, then  $k'=\lceil\frac{n-b}{2b}\rceil=\frac{r}{2}$. Since $r > 60$, we have that $ma' \le (2k'+5)(t+1) = (r+5)(t+1) < r(2t+1)+b_0$, and we are done.
\end{proof}

\bigskip

\section{Proof of Theorem \ref{maintheorem1}}\label{proof}

\bigskip

As mentioned in the last section, we need only prove Proposition \ref{maintheorem}. In view of Lemmas \ref{about-s}, \ref{small-s} and \ref{small-a}, from now on we may always assume that $s\le 7$ and $a\ge3$.

Let $k_1$ be the largest positive integer such that $\lceil \frac{(k_1-1)n}{c} \rceil = \lceil \frac{(k_1-1)n}{b} \rceil$ and $\frac{k_1n}{c} \le m < \frac{k_1n}{b}$. Since $\frac{bn}{c} \le n-1 < n = \frac{bn}{b}$ and $\frac{tn}{b} - \frac{tn}{c} = \frac{t(c-b)n}{bc} > 2$ for all $ t \ge b$, such integer $k_1$ always exists and $ k_1 \le b$.  Since $\lceil \frac{(k_1-1)n}{c} \rceil = \lceil \frac{(k_1-1)n}{b} \rceil$, we have
\begin{equation}\label{k1 is minimalty}
1 > \frac{(k_1-1)n}{b}- \frac{(k_1-1)n}{c} = \frac{(k_1-1)n(c-b)}{bc} =\frac{(k_1-1)n(a-1)}{bc}.
\end{equation}

\medskip

We now show that Proposition \ref{maintheorem} holds  through the following 3 propositions. The first one handles the case when $\lceil \frac{n}{c} \rceil < \lceil \frac{n}{b} \rceil$(i.e. $k_1=1$), and the others handle the case when $\lceil \frac{n}{c} \rceil = \lceil \frac{n}{b} \rceil$(i.e. $k_1\ge 2$).

\medskip
\begin{proposition}\label{case k = 1}
Suppose $\lceil \frac{n}{c} \rceil < \lceil \frac{n}{b} \rceil$ (i.e. there exists a positive integer $m_1$ such that $\frac{n}{c} \le m_1 <\frac{n}{b}$),  then Proposition \ref{maintheorem} holds.
\end{proposition}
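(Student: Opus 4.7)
The plan is to apply Lemma \ref{observation} with $k=1$. Since the condition ``$a \le b/1 = b$'' is part of the standing hypothesis, it is enough to exhibit a single integer $m \in [n/c, n/b]$ with $\gcd(m,n)=1$. Indeed, for any such $m$ one has $|m|_n = m$, $|mc|_n = mc - n$ (because $m \ge n/c$ forces $mc \ge n$, while $m \le n/b$ and $c < 2b$ force $mc < 2n$), $|m(n-b)|_n = n - mb$ (because $m \le n/b$), and $|m(n-a)|_n = n - ma$ (because $a \le b$ gives $ma \le mb \le n$); summing and using $1+c = a+b$ yields $m(1+c-b-a)+n = n$, so $\ind(S)=1$.

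By hypothesis, there is an integer $m_1 \in [n/c, n/b)$. If $\gcd(m_1,n) = 1$ we are done, so assume $p_1 \mid m_1$ (the case $p_2 \mid m_1$ is symmetric). Because $\gcd(n,6) = 1$ forces $p_1, p_2 \ge 5$, the prime $p_1$ divides none of $m_1 \pm 1, m_1 \pm 2, m_1 \pm 3, m_1 \pm 4$. I would therefore inspect the neighbors of $m_1$ that still lie in $[n/c, n/b]$: each such neighbor is coprime to $n$ unless divisible by $p_2$, and since $p_2 \ge 5$ at most one integer per block of five can fail. The length of the interval is $n/b - n/c = n(a-1)/(bc)$, so once this is at least $2$ or $3$ we can select the desired coprime $m$ among $m_1 \pm 1, m_1 \pm 2$ and conclude via Lemma \ref{observation}.

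The substantive obstacle, which I expect to be the crux, is the narrow regime where $n(a-1)/(bc) < 1$, so that $m_1$ is the \emph{only} integer in $[n/c, n/b]$ and no neighbor is available. In this regime I would switch from Lemma \ref{observation} to Lemma \ref{mainlemma}(2), testing $M \in \{m_1-1, m_1+1\}$ (one of which is necessarily coprime to $p_2$, hence to $n$ since both are coprime to $p_1$). With $m_1$ lying extremely close to both $n/c$ and $n/b$, one can locate $Mc$, $Mb$, $Ma$ modulo $n$ by small perturbation from $m_1 c, m_1 b, m_1 a$ and verify at least two of $|Ma|_n > n/2$, $|Mb|_n > n/2$, $|Mc|_n < n/2$. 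The standing reductions $a \ge 3$ and $s = \lfloor b/a \rfloor \le 7$ (so $b$ and $c$ are comparable in size to $a$) feed into these estimates. Handling the interplay between the two prime factors --- ensuring that at least one of the two candidates $m_1 \pm 1$ is truly coprime to $n$ \emph{and} sits in the right range for Lemma \ref{mainlemma}(2) --- is where I anticipate the technical care to concentrate; if both $m_1 \pm 1$ fail for $p_2$, then $p_2 \mid (m_1-1)$ and $p_2 \mid (m_1+1)$, forcing $p_2 \mid 2$, which is impossible, so this potential impasse does not actually occur.
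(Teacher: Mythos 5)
Your opening reduction is sound and matches the paper's: by Lemma \ref{observation} with $k=1$ (where $a\le b$ is automatic), any $m\in[\frac nc,\frac nb]$ coprime to $n$ finishes the proof, and your parity observation that $p_2$ cannot divide both $m_1-1$ and $m_1+1$ is correct. But there are two genuine gaps, located exactly where the difficulty lives. First, the two-integer case is not covered: if $[\frac nc,\frac nb]$ contains exactly the integers $m_1,m_1+1$ with $p_1\mid m_1$ and $p_2\mid (m_1+1)$, then no coprime integer lies \emph{inside} the interval, while the neighbours $m_1-1$ and $m_1+2$ lie outside it, so Lemma \ref{observation} with $k=1$ is simply unavailable; your phrase ``select the desired coprime $m$ among $m_1\pm1,m_1\pm2$'' presupposes these neighbours are in range, which they are not when the interval has length less than $3$. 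The paper's Lemma \ref{case k=1 lemma 2} must pass to $k=2$ with $m=2m_1+1$ (coprime since $p_1\mid m_1$ and $2m_1+1\equiv-1\pmod{p_2}$) and then prove the quantitative bound $ma<n$ using $a\ge3$, $m_1\ge10$ and $(2m_1-2)(a-1)<6b$; your sketch supplies no substitute.

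Second, your proposed treatment of the crux (the interval containing only $m_1$) fails on computation. From $m_1-1<\frac nc\le m_1<\frac nb<m_1+1$ and $m_1\ge5$ one gets $b<c<\frac n4$, and then for $M=m_1-1$: $|Mc|_n=(m_1-1)c\in(n-c,n)$, so $|Mc|_n>\frac n2$ and the inequality $|Mc|_n<\frac n2$ \emph{fails}; $|Mb|_n=(m_1-1)b\in(n-2b,n-b)$, so $|Mb|_n>\frac n2$ holds; but $|Ma|_n=(m_1-1)a<\frac{na}{b}$, which is below $\frac n2$ whenever $a\le\frac b3$ --- and the standing reductions only give $s=\lfloor b/a\rfloor\le7$, so this regime is fully in play. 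Symmetrically, for $M=m_1+1$ one has $|Mb|_n=(m_1+1)b-n<b<\frac n2$ (failing $|Mb|_n>\frac n2$) and $|Ma|_n<\frac{na}{b}+a<\frac n2$ when $a\le\frac b3$, leaving only $|Mc|_n=(m_1+1)c-n<2c<\frac n2$. Thus each candidate satisfies exactly \emph{one} of the three inequalities of Lemma \ref{mainlemma}(2), not the required two, and the plan collapses precisely where you located the crux (moreover the coprime candidate among $m_1\pm1$ may be the wrong-side one). The paper's mechanism here is entirely different: it amplifies to higher multiples, taking the least $\ell$ with $[\frac{\ell n}{c},\frac{\ell n}{b})$ containing at least three integers (which forces a coprime $m$ there), shows $\ell<b$ and $\ell m_1-3<\frac{\ell n}{c}<\frac{\ell n}{b}<\ell m_1+3$, and then does the real work of verifying $ma<n$ --- quick when $m_1\ge10$ (giving $ma<10b\le n$), but requiring the long case analyses of Lemmas \ref{case k=1 lemma 3} and \ref{case k=1 lemma 4} when $m_1=5$ or $m_1=7$. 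Your proposal contains no analogue of this amplification step or of those bounds, so the argument as written does not close.
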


\medskip
\begin{proposition}\label{case k > 1 and a is big}
Suppose $\lceil \frac{n}{c} \rceil = \lceil \frac{n}{b} \rceil$. Let $k_1$ be the largest positive integer such that $\lceil \frac{(k_1-1)n}{c} \rceil = \lceil \frac{(k_1-1)n}{b} \rceil$ and $\frac{k_1n}{c} \le m_1 <\frac{k_1n}{b}$ holds for some integer $m_1$.  If $k_1 >  \frac{b}{a}$, then Proposition \ref{maintheorem} holds.
\end{proposition}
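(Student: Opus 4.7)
The plan is to show that under the standing assumptions $a \ge 3$, $c < n/2$ and $\gcd(n, 6) = 1$, the hypotheses of Proposition \ref{case k > 1 and a is big} are mutually inconsistent; consequently the proposition holds vacuously.

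First I would use the strict inequality $k_1 > b/a$ together with the integrality of $k_1$ to conclude $k_1 - 1 \ge s$, and substitute into \eqref{k1 is minimalty} to obtain $s(a-1) < bc/n$. The standing bound $c < n/2$ then gives $b > 2s(a-1)$; combined with $b < (s+1)a$ (immediate from $s = \lfloor b/a \rfloor$), this simplifies to $s(a-2) < a$. A short case analysis on $a \ge 3$ shows $s \le 2$, and the case $s = 2, a = 3$ would require an integer $b$ strictly between $8$ and $9$, which is impossible. Hence $s = 1$, and then $2(a-1) < b < 2a$ forces $b = 2a - 1$ and $c = 3a - 2$.

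Next I would reapply \eqref{k1 is minimalty} (now with $k_1 \ge s + 1 = 2$) to bound $n$. A polynomial division yields $(2a-1)(3a-2)/(a-1) = 6a - 1 + 1/(a-1)$, so $n \le 6a - 1$, and combined with $n > 2c = 6a - 4$ this leaves $n \in \{6a-3,\, 6a-2,\, 6a-1\}$. Since $6a - 3 = 3(2a-1)$ and $6a - 2 = 2(3a-1)$, the hypothesis $\gcd(n,6) = 1$ forces $n = 6a - 1$.

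Finally I would verify the standing hypothesis $\lceil n/c \rceil = \lceil n/b \rceil$ directly on this candidate. A quick calculation gives $n/c = 2 + 3/(3a-2) \in (2,3)$ and $n/b = 3 + 2/(2a-1) \in (3,4)$ for every $a \ge 3$, so $\lceil n/c \rceil = 3 \ne 4 = \lceil n/b \rceil$. This contradicts the hypothesis and closes the argument. The main obstacle is the initial funnelling through the casework on $s(a-2) < a$ and the small-$a$ subcases; after the reduction to the single family $(b, c) = (2a-1, 3a-2)$, everything is routine arithmetic. Notably, neither the assumption $n > 1000$ nor the structure $n = p_1^\alpha p_2^\beta$ is needed here — only $\gcd(n, 6) = 1$ together with $c < n/2$ is used.
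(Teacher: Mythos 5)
Your proof is correct, and it takes a genuinely different route from the paper's. The paper splits on $k_1 = 2$ versus $k_1 \ge 3$: in the first case it derives $b = 2a-1$, $c = 3a-2$ (in the notation $b = 2\ell+1$, $a = \ell+1$) and eliminates $n \ge 3c$, $n = 2c+1$, $n = 2c+3$ and $n \ge 2c+5$ one by one — its $n = 2c+3$ subcase is exactly your $n = 6a-1$ ceiling contradiction; in the second case it writes $b = k_1\ell + k_0$, shows $a = \ell+1$, gets a contradiction with \eqref{k1 is minimalty} when $\ell \ge 2$, and falls back on Lemma \ref{small-a} when $\ell = 1$ (i.e.\ $a = 2$). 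Your single funnel — $k_1 \ge s+1$ fed into \eqref{k1 is minimalty} gives $s(a-1) < bc/n < b/2$, whence $s(a-2) < a$, then $s = 1$, $b = 2a-1$, $c = 3a-2$, and $6a-4 < n \le 6a-1$ with $\gcd(n,6)=1$ forcing $n = 6a-1$ — handles all $k_1 \ge 2$ uniformly, replacing the paper's two-case split and parametrization by one inequality, and it yields the sharper conclusion that under the standing assumption $a \ge 3$ the hypotheses are outright inconsistent, so Lemma \ref{small-a} is never invoked. One caveat to keep in mind: the vacuity claim genuinely requires the Section 3 standing assumption $a \ge 3$ (itself justified by Lemma \ref{small-a}). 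For $a = 2$ your inequality $s(a-2) < a$ is vacuous and the funnel instead lands on $b = 2s+1$, $c = 2s+2$, $n = 4s+5$, where both $n/c = 2 + \frac{1}{2s+2}$ and $n/b = 2 + \frac{3}{2s+1}$ have ceiling $3$ once $s \ge 2$, so the final contradiction evaporates — which is precisely why the paper retains the Lemma \ref{small-a} fallback in its $\ell = 1$ subcase. Your closing observation is accurate as well: the paper's own proof of this proposition likewise uses neither $n > 1000$ nor $n = p_1^{\alpha} p_2^{\beta}$, those hypotheses entering only through Lemma \ref{small-a}.
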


\medskip
\begin{proposition}\label{case k > 1 and a is small}
Suppose $\lceil \frac{n}{c} \rceil = \lceil \frac{n}{b} \rceil$. Let $k_1$ be the largest positive integer such that $\lceil \frac{(k_1-1)n}{c} \rceil = \lceil \frac{(k_1-1)n}{b} \rceil$ and $\frac{k_1n}{c} \le m_1 <\frac{k_1n}{b}$ holds for some integer $m_1$. If $ k_1 \le  \frac{b}{a}$, then Proposition \ref{maintheorem} holds.
\end{proposition}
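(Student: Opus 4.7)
The plan is to exploit the hypothesis $k_1\le b/a$ so that any integer $m$ coprime to $n$ that lies in some interval $I_k:=[\frac{kn}{c},\frac{kn}{b}]$ with $k_1\le k\le \lfloor b/a\rfloor$ automatically satisfies the hypotheses of Lemma \ref{observation} and thereby yields $\ind(S)=1$. The whole task therefore reduces to producing such a coprime integer somewhere in $\bigcup_{k=k_1}^{\lfloor b/a\rfloor} I_k$.

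By the definition of $k_1$ the interval $I_{k_1}$ already contains an integer $m_0$. If $\gcd(m_0,n)=1$ we are done. From here on I would assume that every integer of $\bigcup_{k=k_1}^{\lfloor b/a\rfloor} I_k$ shares a prime factor with $n=p_1^{\alpha}p_2^{\beta}$ and seek a contradiction. Three ingredients will be used throughout: the standing assumptions $s=\lfloor b/a\rfloor\le 7$ and $a\ge 3$ set at the start of Section \ref{proof}; the estimate \eqref{k1 is minimalty}, which reads $(k_1-1)(a-1)<bc/n$; and $\gcd(n,6)=1$, which forces $p_1,p_2\ge 5$ and hence implies that among any three consecutive integers at most two can fail to be coprime to $n$.

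My approach then splits into two branches. \textbf{Branch A.} If for some admissible $k$ the length $\frac{kn(a-1)}{bc}$ of $I_k$ exceeds $2$, then $I_k$ contains at least three consecutive integers, so at least one is coprime to $n$ and Lemma \ref{observation} closes the argument. A short computation shows that Branch A covers every case with $c\le\frac{(a-1)n}{2a}$, i.e.\ whenever $c$ is bounded away from $n/2$. \textbf{Branch B.} Otherwise $c$ is close to $n/2$, each $I_k$ has length $<2$ and so carries at most two integers, and because $k\le b/a<b/(a-1)$ the intervals $I_{k_1},\ldots,I_s$ are pairwise disjoint; the total number of candidate integers is therefore at most $2(s-k_1+1)\le 12$. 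In this branch I would fix $k_1\in\{2,3,\ldots,7\}$ and study the divisibility pattern of these candidates: each must be divisible by $p_1$ or $p_2$, consecutive multiples of $p_i$ are spaced by at least $5$, and neighbouring candidates (either inside a single $I_k$ or across adjacent intervals $I_k,I_{k+1}$) are separated by only a small amount. Combining this with \eqref{k1 is minimalty} and Remark \ref{remark}(1) should in each sub-case either exhibit a coprime candidate directly or force a numerical contradiction.

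The principal obstacle lies in Branch B. Although the candidates in $\bigcup_k I_k$ are few, the rightward drift of $I_k$ by roughly $n/b$ as $k$ increments means they can be spread over a window of length comparable to $\min(p_1,p_2)$, so patterns such as ``all multiples of $p_1$'' or ``alternating $p_1,p_2$'' along the list of candidates are not ruled out by the spacing alone. The delicate bookkeeping will be to use the precise bound \eqref{k1 is minimalty}, together with the refined length restrictions of Lemma \ref{small-s}(ii)--(iv), to exclude all such patterns. I expect the argument to be non-uniform and to proceed by a case analysis in $k_1\in\{2,\ldots,7\}$, which is why the proof of this proposition is likely the longest of the three making up Section \ref{proof}.
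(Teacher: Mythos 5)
There is a genuine gap, and it sits exactly at your central reduction. You claim the whole task "reduces to producing a coprime integer somewhere in $\bigcup_{k=k_1}^{\lfloor b/a\rfloor} I_k$," and your Branch B promises that divisibility bookkeeping will "either exhibit a coprime candidate directly or force a numerical contradiction." Neither alternative is available in the hardest subcases: configurations satisfying all your divisibility constraints genuinely occur (e.g.\ $k_1=2$, $m_1=5$, $n=5^{\alpha}7^{\beta}$, $b$ near $\frac{2n}{5}$ and $c$ near $\frac{n}{2}$), so no contradiction with $n>1000$ or with \eqref{k1 is minimalty} is forthcoming, and every candidate in your union can indeed be a multiple of $5$ or $7$. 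In precisely these subcases the paper's proof must leave the interval framework entirely: it applies Lemma \ref{mainlemma}(1) with $k>s$, where $a\le\frac{b}{k}$ fails so Lemma \ref{observation} is unavailable and $ma<n$ has to be verified through refined bounds on $a=c-b+1$ extracted from the interval positions (e.g.\ $m=9$, $k=4$ when $s$ may be as small as $2$ or $3$); and when even that fails it invokes Lemma \ref{mainlemma}(2) or Remark \ref{remark}(0) with hand-picked multipliers $M\in\{3,11,12,13,17,23\}$ and direct residue computations such as $|23|_n+|23c|_n+|23(n-b)|_n+|23(n-a)|_n=23+(23c-11n)+(9n-23b)+(3n-23a)=n$ in Lemma \ref{k = 2 and 5 | n}. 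That second mechanism — choosing $M$ so that at least two of $|Ma|_n>\frac{n}{2}$, $|Mb|_n>\frac{n}{2}$, $|Mc|_n<\frac{n}{2}$ hold — is the missing idea in your proposal, not a routine case check to be absorbed into "delicate bookkeeping."

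Two secondary problems compound this. First, the paper's proof is organized around a second family of intervals, $[\frac{(2s-2t-1)n}{2b},\frac{(s-t)n}{b}]$ together with \eqref{no integer co-prime to n}, which encode the condition $|Mb|_n>\frac{n}{2}$ and drive the first step $k_1\le 3$ (Lemma \ref{k < 4}) as well as Lemma \ref{bound of n}; you mention Lemma \ref{small-s}(ii)--(iv) only in passing, but without this second interval type the reduction to $k_1\in\{2,3\}$, and hence your bound of at most $12$ candidates, is not obtained. Second, a quantitative slip: an interval of length greater than $2$ is only guaranteed to contain two integers (consider $[0.1,\,2.2]$), so to guarantee three consecutive integers — and hence one coprime to $n$, since $p_1,p_2\ge 5$ — you need length at least $3$. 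Your Branch A threshold $c\le\frac{(a-1)n}{2a}$ is therefore too generous, which enlarges Branch B, i.e.\ exactly the region where your argument has no tool to finish.
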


\bigskip

\begin{center}
 3.1. Proof of Proposition \ref{case k = 1}
\end{center}

\medskip

In this subsection, we assume that $\lceil \frac{n}{c} \rceil < \lceil \frac{n}{b} \rceil$. Let $m_1=\lceil \frac{n}{c} \rceil$. Then we have   $m_1-1 < \frac{n}{c} \le m_1 < \frac{n}{b}$. By Lemma \ref{mainlemma} (1), it suffices to find $k$ and $m$ such that $\frac{kn}{c} \le m <\frac{kn}{b}, \ \gcd(m,n)=1, \ 1 \le k \le b,$ and $ma < n$. So in what follows, we may always assume that $\gcd(n, m_1)>1$.

\medskip
\begin{lemma}\label{case k=1 lemma 2}
If $[\frac{n}{c}, \frac{n}{b}]$ contains at least two integers, then $\ind(S)=1$.
\end{lemma}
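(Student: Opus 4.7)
The plan is to apply Lemma~\ref{mainlemma}(1) with $k=1$ by exhibiting an integer $m \in [n/c, n/b]$ that is coprime to $n$. A preliminary remark streamlines the verification: once such an $m$ is found, the side condition $ma<n$ is automatic, because $ma \le mb \le n$ (using $a \le b$ and $m \le n/b$), and equality $ma=n$ would force $m=n/a$ with $a\mid n$, giving $\gcd(m,n)=n/a>1$, contradicting coprimality (since $a>1$).

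Let $m_1=\lceil n/c\rceil$. The hypothesis places $m_1$ and $m_1+1$ in the interval, and the running assumption gives $\gcd(m_1,n)>1$. Since $n=p_1^{\alpha}p_2^{\beta}$ with $p_1,p_2\ge 5$, I may assume without loss of generality that $p_1\mid m_1$. If $\gcd(m_1+1,n)=1$, then $m=m_1+1$ concludes the argument. Otherwise $p_2\mid m_1+1$ (as $p_1\nmid m_1+1$), and then each of $m_1+2,\,m_1+3,\,m_1+4$ is coprime to $n$: being $\ge 5$, neither $p_1$ nor $p_2$ divides any of them. Thus whenever $m_1+2$ lies in $[n/c, n/b]$ --- equivalently the interval contains at least three integers --- we are again done.

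The main obstacle is the residual case in which $[n/c,n/b]\cap\Z=\{m_1,m_1+1\}$ with $p_1\mid m_1$ and $p_2\mid m_1+1$. Here I would pass to the scaled interval $[2n/c,2n/b]$ and apply Lemma~\ref{mainlemma}(1) with $k=2$ and $m=2m_1+1$: this $m$ lies in $[2n/c,2n/b]$ (since $2m_1\ge 2n/c$ and $2m_1+2\le 2n/b$) and is coprime to $n$, because $p_1\mid 2m_1$ and $p_2\mid 2(m_1+1)$; the condition $k\le b$ holds as $b\ge a\ge 3$. The bottleneck is then the bound $ma<n$, equivalently $n(2a-b)<ab$: this is immediate when $b\ge 2a$, but when $b<2a$ --- and most acutely when $a=b$ --- a finer argument is required. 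In that regime I plan to combine the length estimate $n(a-1)<2bc$ forced by the interval containing only two integers with the identity $c=a+b-1$, or, failing that, to invoke Lemma~\ref{mainlemma}(2) by producing an $M\in(n/(2a),n/a)$ coprime to $n$ with $|Mc|_n<n/2$. Pushing through this tight regime --- particularly in the $a=b$ subcase with $a$ close to $n/2$ --- is where I expect most of the effort to go.
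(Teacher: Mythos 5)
Your setup coincides with the paper's: your preliminary remark is exactly Lemma~\ref{observation} with $k=1$, the reduction to the residual case $[\frac{n}{c},\frac{n}{b}]\cap\Z=\{m_1,m_1+1\}$ with $p_1\mid m_1$, $p_2\mid m_1+1$ is the paper's, and your choice $m=2m_1+1$, $k=2$ is precisely the paper's choice. But the proof stops at the decisive step: the bound $ma<n$ is left as a plan with two sketched fallbacks, and so the argument is incomplete exactly where the work lies. Worse, the regime you flag as the bottleneck is vacuous. Since $\gcd(n,6)=1$ and both $m_1$ and $m_1+1$ share a prime factor $\ge 5$ with $n$, the cases $m_1\in\{5,7\}$ are impossible ($m_1+1\in\{6,8\}$ would be coprime to $n$), so $m_1\ge 10$; and since $m_1+1\le \frac{n}{b}$ this gives $n\ge (m_1+1)b\ge 11b$, i.e. $b\le \frac{n}{11}$ --- nowhere near $\frac{n}{2}$, so the ``$a=b$ with $a$ close to $n/2$'' scenario cannot arise. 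This lower bound on $m_1$ is the one missing idea.

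With it, the paper closes the gap in three lines: doubling the interval gives $(2m_1-2)c<2n<(2m_1+4)b$, and substituting $c=a+b-1$ yields $(2m_1-2)(a-1)<6b$; since $a\ge 3$ is in force at this point (Section 3 preamble, via Lemma~\ref{small-a}) and $m_1\ge 10$,
$$ma=(2m_1+1)a=\frac{2m_1+1}{2m_1-2}\cdot\frac{a}{a-1}\cdot(2m_1-2)(a-1)<\frac{21}{18}\cdot\frac{3}{2}\cdot 6b<11b\le n.$$
Note that $(2m_1-2)(a-1)<6b$ with $m_1\ge 10$ already forces $b>3(a-1)\ge 2a$, so the case $b\ge 2a$ that you call ``immediate'' is in fact the only case --- your feared finer analysis for $b<2a$ is never needed. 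Your fallback estimate $n(a-1)<2bc$ is too weak on its own: combined only with $c<\frac{n}{2}$ it gives $a-1<b$, which is trivial; it bites only after one knows $m_1\ge 10$ (equivalently $c\le \frac{n}{10}$), at which point it gives $b>5(a-1)$ and the conclusion follows anyway. So the skeleton of your proposal is right, but without the observation $m_1\ge 10$ the proof as written does not go through.
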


\begin{proof}
Since $a \le b$, by Lemma \ref{observation} we may assume every integer in $[\frac{n}{c}, \frac{n}{b}]$ is not co-prime to $n$. Since  $n=p_1^{\alpha}\cdot p_2^{\beta} $, we may assume that  $[\frac{n}{c}, \frac{n}{b}]$ contains exactly  two integers. Then $$m_1-1 < \frac{n}{c} \le m_1 < m_1+1 \le \frac{n}{b} < m_1+2,$$ and we also  have that $\ \gcd(n, m_1)>1$ and $\gcd(n, m_1+1)>1$. Since $n=p_1^{\alpha}\cdot p_2^{\beta}$ and $\gcd(n,6)=1$,  we infer that $m_1 \ge 10$ and $\gcd(2m_1+1,n)=1$. Then $n \ge (m_1+1)b \ge 11b$.

Note that
\begin{equation}\label{inequ 1}
2m_1-2 < \frac{2n}{c} \le 2m_1 < 2m_1+1 < 2m_1+2 \le \frac{2n}{b} < 2m_1+4.
\end{equation}
Let $m=2m_1+1$ and $ k=2$. We shall show that $ma < n$.

Since $1+c=a+b$, by \eqref{inequ 1} we have  $(2m_1-2)(b+a-1) = (2m_1-2)c < 2n < (2m_1+4)b$, and thus $(2m_1-2)(a-1)<6b.$ Since $a \ge 3$ and $m_1 \ge 10$, we have
\begin{align*}
ma = (2m_1+1)a =\frac{2m_1+1}{2m_1-2} \time \frac{a}{a-1} \time (2m_1-2)(a-1) < \frac{2\time 10+1}{2 \time 10 -2} \time \frac{3}{3-1} \time 6b < 11b \le n,
\end{align*}
and we are done.
\end{proof}

\medskip

By Lemma \ref{case k=1 lemma 2}, we may assume that $[\frac{n}{c}, \frac{n}{b}]$ contains exactly one integer $m_1$, and thus
$$m_1-1 < \frac{n}{c} \le m_1  < \frac{n}{b} < m_1+1.$$
Since $\gcd(n,6)=1$ and $(n,m_1)>1$,  we have either $m_1=5$ or $m_1=7$ or $m_1 \ge 10$.

Let $\ell$ be the smallest integer such that  $[\frac{ \ell n}{c}, \frac{\ell n}{b})$ contains at least three integers. Clearly, $\ell \ge 2$. Since $ \frac{n}{b} -m_1 < 1$ and $m_1-\frac{n}{c} < 1$, by using the minimality of $\ell$  we obtain that $\ell m_1-3 < \frac{\ell n}{c} < \frac{\ell n}{b} < \ell m_1+3$. Then $\frac{\ell n(c-b)}{bc} = \frac{\ell n}{b} - \frac{\ell n}{c} < (\ell m_1+3) - (\ell m_1-3)< 6$ and thus $\ell < \frac{6bc}{(c-b)n} = \frac{6b}{c-b}\time \frac{c}{n} < \frac{6b}{(a-1)(m_1-1)} \le \frac{6b}{(3-1)(5-1)} < b $.

\medskip

We claim that $[\frac{\ell n}{c},\frac{\ell n}{b})$ contains at most four integers. Assume to the contrary that  $[\frac{\ell n}{c},\frac{\ell n}{b})$ contains at least five integers, so $\ell m_1-3 < \frac{\ell n}{c} \le \ell m_1-2 < \ell m_1+2 < \frac{\ell n}{b} \le \ell m_1+3$. Then $\frac{2}{\ell} \le m_1 - \frac{n}{c}, \  \frac{n}{b}-m_1 \le \frac{3}{\ell}$. Since $\ell \ge 2$, we have  $ (\ell -1) ( m_1 - \frac{n}{c}) \ge  (\ell -1)\time \frac{2}{\ell} \ge 1$ and $  (\ell -1)(\frac{n}{b}-m_1) >  (\ell -1)\time \frac{2}{\ell} \ge 1$. Hence, $\frac{(\ell-1) n}{c} \le (\ell -1)m_1 - 1 < (\ell -1)m_1 + 1 < \frac{(\ell-1) n}{b}$, so  $[\frac{ (\ell-1) n}{c}, \frac{(\ell-1) n}{b})$ contains at least three integers, a contradiction to the minimality of $ \ell$.

By the above claim  we have  either
\begin{equation}\label{at most 4 integer big}
\ell m_1-2 < \frac{\ell n}{c} < \frac{\ell n}{b} < \ell m_1+3
\end{equation} or
\begin{equation}\label{at most 4 integer small}
\ell m_1-3 < \frac{\ell n}{c} < \frac{\ell n}{b} \le \ell m_1+2.
\end{equation}

\medskip

We remark that since $n=p_1^{\alpha} \cdot p_2^{\beta} $ and $[\frac{\ell n}{c},\frac{\ell n}{b})$ contains at least $3$ integers, one of them (say $m$) must be co-prime to $n$. If $ma<n$, then we are done by Lemma \ref{mainlemma} (1)(with $k=l<b$). Otherwise, for each integer $\delta \in [1, \ell-1]$, $\ [\frac{\delta n}{c},\frac{\delta n}{b})$ contains at most $2$ integers. We shall try to find an integer $m$ in one of those intervals such that $ma<n$ and this method will be used frequently in sequel.

\medskip

Recall that $[\frac{n}{c}, \frac{n}{b}]$ contains exactly one integer $m_1$,  $\ \gcd (m_1, n)>1 $ and $\ell \ge 2$ . We first deal with two special cases in the following two lemmas. More specifically, we shall show that if $[\frac{n}{c}, \frac{n}{b}]$ contains  $5$  or $7$, then Proposition \ref{case k = 1} holds.

\medskip

\begin{lemma}\label{case k=1 lemma 3}
If $4 < \frac{n}{c} \le 5  < \frac{n}{b} < 6$ and $5 \mid n$, then  $\ind(S)=1$.
\end{lemma}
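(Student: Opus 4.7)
The plan is to invoke Lemma~\ref{mainlemma}(1) (or its consequence Lemma~\ref{observation}) by producing some $k \in [1, b]$ and an integer $m$ coprime to $n$ with $\frac{kn}{c} \le m \le \frac{kn}{b}$ and $ma < n$. Write $n = 5^{\alpha} q^{\beta}$, where $q \ge 7$ is the other prime factor of $n$ (possible because $5 \mid n$, $\gcd(n, 6) = 1$ and $n = p_1^{\alpha} p_2^{\beta}$). The hypotheses translate to $c \in [n/5, n/4)$ and $b \in (n/6, n/5)$, and by the standing reductions from Lemmas~\ref{about-s}, \ref{small-s}, \ref{small-a} we already have $a \ge 3$ and $s = \lfloor b/a \rfloor \le 7$. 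From $a = c - b + 1$ we obtain the crude bound $a < n/12 + 1$, so $ma < n$ is automatic for any $m \le 11$ once $n > 1000$.

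I would begin with $k = 2$. The interval $[2n/c, 2n/b]$ sits in $(8, 12)$ and avoids $12$ because $b > n/6$, so its integer content comes from $\{9, 10, 11\}$; since $5 \mid 10$, the usable candidates are $9$ and $11$. Here $9$ is always coprime to $n$ (as $\gcd(n, 6) = 1$) and lies in $[2n/c, 2n/b]$ iff $c \ge 2n/9$; likewise $11$ is coprime to $n$ iff $q \ne 11$ and lies in the interval iff $b \le 2n/11$. These two criteria dispose of all configurations except the residual one: $c < 2n/9$ combined with either $q = 11$ or $b > 2n/11$.

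For the residual configuration the sharper bound $a < n/18 + 1$ follows from $c < 2n/9$ and $b > n/6$, so $ma < n$ for every $m \le 17$. Switching to $k = 3$, the interval $[3n/c, 3n/b] \subset (13.5, 18)$ contains $m = 16 = 2^{4}$ (always coprime to $n$) precisely when $b \le 3n/16$; since $\tfrac{2}{11} < \tfrac{3}{16}$, this handles every subcase with $b \le 2n/11$, including the full $q = 11$, $b \le 2n/11$ subcase. The remaining possibility $c < 2n/9$, $b > 2n/11$ forces $a < 4n/99 + 1$, and combined with $b > 2n/11$ this gives $b/a > \tfrac{9}{2}$, so $s \ge 4$. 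Lemma~\ref{observation} with $k = 4$ then applies once $[4n/c, 4n/b] \subset (18, 22)$ contains an integer coprime to $n$ chosen from $\{19, 21\}$ (as $5 \mid 20$). A direct length estimate shows that if neither $19$ nor $21$ lies in this interval, then $a < n/50 + 1$ and $b > 4n/21$, forcing $s \ge 9$, contradicting $s \le 7$. Both candidates are coprime to $n$ unless $q \in \{7, 19\}$; these two exceptional values are dispatched by pushing to $k \in \{5, 6, 7\}$ and selecting $m$ from small integers avoiding the factor $q$ (for example $m = 26$ when $q = 7$ and $m = 24$ when $q = 19$), whose membership in the relevant $[kn/c, kn/b]$ follows from the tight control on $c$, $b$ and $s$.

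The principal technical obstacle is this final corner $c < 2n/9$, $b > 2n/11$: the interval $[kn/c, kn/b]$ has length $kn(a-1)/(bc)$, which can dip below $1$ when $a$ is near $3$, so uniformly locating a coprime multiplier requires careful bookkeeping. The rescue comes from the fact that $s \le 7$ confines $a$ from below (via $a > b/(s+1) > n/(6(s+1))$) while $c < 2n/9$ confines it from above (via $a < 4n/99 + 1$); this numerical squeeze leaves only a finite list of arithmetic patterns, each admitting a concrete $(k, m)$ witness.
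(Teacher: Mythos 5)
Your route is genuinely different from the paper's: the paper introduces the smallest $\ell$ for which $[\frac{\ell n}{c}, \frac{\ell n}{b})$ contains three integers and then runs a long case analysis on the positions of $\frac{\ell n}{c}$ and $\frac{\ell n}{b}$, extracting pairs such as $(m,k)=(41,8)$, $(36,7)$, $(21,4)$, $(16,3)$; you instead scan $k=2,3,4$ directly and, whenever $[\frac{kn}{c},\frac{kn}{b}]$ misses every coprime candidate, convert that into a squeeze on $a=c-b+1$ and a lower bound on $s=\lfloor b/a\rfloor$ contradicting the standing assumption $s\le 7$. Through $k=4$ your argument is correct: the bounds $a<\frac{n}{12}+1$, then $a<\frac{n}{18}+1$, make $ma<n$ automatic for the relevant $m$ once $n>1000$; the candidates $9,11$ at $k=2$, $16$ at $k=3$, and $19,21$ at $k=4$ are identified correctly; and your observation that missing both $19$ and $21$ forces the interval inside $(19,21)$, hence $c<\frac{4n}{19}$ and $b>\frac{4n}{21}$, hence $s\ge 9>7$, is valid (the interval cannot escape past $18$ or $22$ because $\frac{n}{5}\le c$ and $b<\frac{n}{5}$). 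This is leaner than the paper's treatment and uses Lemma~\ref{mainlemma}(1) in the same spirit, just organized around $s\le 7$ rather than the minimality of $\ell$.

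The genuine gap is your final step, the exceptional primes $q\in\{7,19\}$. The witnesses you name are unverified and in fact need not work: for $q=7$ you propose $m=26$ at $k=5$, but $26\in[\frac{5n}{c},\frac{5n}{b}]$ requires $b\le\frac{5n}{26}$, while your corner only guarantees $\frac{2n}{11}<b<\frac{n}{5}$, and since $\frac{2}{11}<\frac{5}{26}<\frac{1}{5}$ the case $b>\frac{5n}{26}$ is entirely possible; similarly for $q=19$ and $m=24$, membership requires $c\ge\frac{5n}{24}$, which is not implied by $\frac{n}{5}\le c<\frac{4n}{19}$. The blanket claim that membership ``follows from the tight control on $c$, $b$ and $s$'' is false as stated, because the $k=5$ interval can sit inside $(24,26)$ and contain only the useless integer $25$. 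The repair is exactly the move you already made at $k=4$, and you should carry it out: in the $q=7$ corner ($c<\frac{4n}{19}$, $b>\frac{2n}{11}$) the interval $[\frac{5n}{c},\frac{5n}{b}]\subset(23.75,\,27.5)$ has coprime candidates $24,26,27$, and in the $q=19$ corner ($c<\frac{2n}{9}$, $b>\frac{4n}{21}$) it lies in $(22.5,\,26.25)$ with coprime candidates $23,24,26$; in either case the endpoint constraints $c\ge\frac{n}{5}$ and $b<\frac{n}{5}$ show that missing all candidates forces the interval inside $(24,26)$, whence $c<\frac{5n}{24}$, $b>\frac{5n}{26}$, so $a<\frac{5n}{312}+1$ and $\frac{b}{a}>11$ for $n>1000$, contradicting $s\le 7$; and when a candidate does lie in the interval, $ma<n$ holds since $m\le 27$ and $a<\frac{6n}{209}+1$. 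With that paragraph supplied your proof closes; as written, the $q\in\{7,19\}$ corner is unproven.
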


\begin{proof}
Since $4 < \frac{n}{c} \le 5  < \frac{n}{b} < 6$, $n > 5b $. Note that $m_1=\lceil \frac{n}{c} \rceil =5.$

If $\ell=2$, since $[\frac{ \ell n}{c}, \frac{\ell n}{b})$ contains at least three integers,  we must have $8 < \frac{2n}{c} < 9 < 10 < 11 < \frac{2n}{b} < 12.$ Thus $  \frac{n}{6}< b <c < \frac{n}{4}$. Let $m = 9$ and $k = 2$. Then $ 9a = 9\time (c-b+1) \le 9 \time (\frac{n-1}{4}-\frac{n+1}{6}+1) < n,$ and we are done.

Next assume that $\ell \ge 3$. Since $[\frac{\ell n}{c},\frac{\ell n}{b})$ contains at least  three integers and $5\ell-3 < \frac{\ell n}{c} < \frac{\ell n}{b} \le 5\ell +3$, we can divide the proof into three cases.

\medskip
\noindent {\bf Case 1.} $5\ell + 2 \le \frac{\ell n}{b} < 5\ell +3$. Then $\frac{2}{\ell} \le \frac{n}{b}-5 \le \frac{3}{\ell}$.

If $\frac{\ell +1 }{2} \le \gamma \le \ell - 1$, then $\gamma(\frac{n}{b}-5) > \frac{\ell}{2} \cdot \frac{2}{\ell} = 1$ and thus $ \frac{\gamma n}{c} \le 5\gamma < 5\gamma +1 < \frac{\gamma n}{b}$. By the minimality of $\ell$ we infer that
\begin{equation}\label{gamma 1}
5\gamma-1 < \frac{\gamma n}{c} \le 5\gamma < 5\gamma+1 < \frac{\gamma n}{b} \le 5\gamma+2.
\end{equation}
Let $\gamma = \ell -1$. By \eqref{gamma 1}, we have $(5(\ell-1)-1)(b+a-1) = (5\ell-6)c < (\ell-1)n \le (5(\ell-1)+2)b$ and thus $(5\ell-6)(a-1) < 3b$.

First assume that  $\ell \ge 16$.   Let $k=\ell$ and let $m$ be an integer in $[\frac{\ell n}{c},\frac{\ell n}{b})$ which is co-prime to $n$. Since $5\ell + 2 \le \frac{\ell n}{b} < 5\ell +3,$ we have  that $m \le 5\ell +2$. Then
\begin{align*}
ma \le (5\ell+2)a =\frac{5\ell+2}{5\ell-6} \time \frac{a}{a-1} \time  (5\ell-6)(a-1) < \frac{5\time 16+2}{5 \time 16-6 } \time \frac{3}{3-1} \time 3b < 5b \le n,
\end{align*}
and we are done.

Next assume that $6 \le \ell \le 15$.

If  $\gcd(5(\ell-1)+1, n)=1$, let $m=5(\ell-1)+1$ and $ k= \ell -1$. Then by \eqref{gamma 1} $\frac{kn}{c} < m < \frac{kn}{b}$ and
\begin{align*}
(5(\ell-1)+1)a =\frac{5(\ell-1)+1}{5\ell-6} \time \frac{a}{a-1} \time  (5\ell-6)(a-1) < \frac{5\time 6-4}{5 \time 6-6 } \time \frac{3}{3-1} \time 3b < 5b \le n,
\end{align*}
as desired. Thus we may assume that $ \gcd( 5(\ell-1)+1 , n) > 1$.

If $ 13 \le \ell \le 15 $, applying \eqref{gamma 1} with $\gamma = 8$, we have $39 < \frac{8n}{c} < 40 < 41 < \frac{8n}{b} \le 42.$ Thus $\frac{4n}{21} \le b < c < \frac{8n}{39}$. Since $ \gcd( 5(\ell-1)+1 , n) > 1$ and $\gcd (5, n) >1$ and $n =p_1^{\alpha} \cdot p_2^{\beta},$ we have that $\gcd (41 , n)=1$. Let $m = 41$ and $ k= 8$. Then $41a = 41(c-b+1) < 41 (\frac{8n}{39}-\frac{4n}{21}+1) < n$, and we are done.

If $8 \le \ell \le 12 $, applying \eqref{gamma 1} with $\gamma = 7$, we have $34 < \frac{7n}{c} < 35 < 36 < \frac{7n}{b} \le 37,$ and so $\frac{7n}{37} \le b < c < \frac{7n}{34}$. Note that $\gcd (36 , n)=1$. Let $m = 36$ and $k= 7$. Then $36a = 36 \time (c-b+1) < 36 \time  (\frac{7n}{34}-\frac{7n}{37}+1) < n$, and we are done.

If $6 \le \ell \le 7 $, applying \eqref{gamma 1} with $\gamma = 4$, we have $19 < \frac{4n}{c} < 20 < 21 < \frac{4n}{b} \le 22,$ and so $\frac{2n}{11} \le b < c < \frac{4n}{19}$. As above we have $\gcd (21 , n)=1$. Let $m = 21$ and $k= 4$. Then $21a = 21(c-b+1) < 21 (\frac{4n}{19}-\frac{2n}{11}+1) < n$, and we are done.

Finally, assume that $\ell \le 5$.

If $4\le \ell \le 5 $, applying \eqref{gamma 1} with $\gamma = 3$, we have $14 < \frac{3n}{c} < 15 < 16 < \frac{3n}{b} \le 17,$ then $\frac{3n}{17} < b < c < \frac{3n}{14}$. Note that  $\gcd (16 , n)=1$. Now let $m = 16$ and $ k= 3$. Then $16a = 16(c-b+1) < 16 (\frac{3n}{14}-\frac{3n}{17}+1) < n$, and we are done.

If $ \ell = 3 $, we have $ \frac{3n}{c} < 15 < 16 < 17 < \frac{3n}{b} \le 18.$ If $\frac{3n}{c} \ge 14$, then  $c \le \frac{3n}{14}$ and $b \ge \frac{n}{6}$. Note that  $\gcd (16 , n)=1$. Let $m = 16$ and $  k= 3$. Then $16a = 16 \time (c-b+1) \le 16 \time (\frac{3n}{14}-\frac{n}{6}+1) < n$, and we are done. Now assume that $ \frac{3n}{c} < 14$, by \eqref{at most 4 integer big} we have $13 < \frac{3n}{c} < 14$. Applying \eqref{gamma 1} with $\gamma = 2$, we have $9 < \frac{2n}{c} < 10 < 11 < \frac{2n}{b} \le 12,$  and then $\frac{n}{6} < b < c < \frac{2n}{9}$. Note that either $\gcd(11, n)=1$ or $\gcd(14, n)=1$. Now let $k=2$ and $m=11$ if $\gcd(m, 11)=1$, or let $k=3$ and $m=14$ if $\gcd(m , 14)=1$. Then $ma \le 14\time (c-b+1) < 14 \time (\frac{2n}{9}-\frac{n}{6}+1) < n$.

This completes the proof of Case 1.

\medskip
\noindent {\bf Case 2.} $\frac{\ell n}{b} < 5\ell +2$ and $5\ell-3 < \frac{\ell n}{c} \le 5\ell - 2$.  This case can be proved in a similar way to Case 1.

\medskip
\noindent {\bf Case 3.} $\frac{\ell n}{b} < 5\ell +2$ and $ \frac{\ell n}{c} > 5\ell - 2$. Thus $5\ell-2 < \frac{\ell n}{c} \le 5\ell - 1 < 5\ell < 5\ell +1  < \frac{\ell n}{b} < 5\ell +2$. This implies that  every integer in $[\frac{\ell n}{c}, \frac{\ell n}{b})$ is less than $5\ell+2$. By the minimality of $\ell$, we must have one of the following holds.
\begin{itemize}
\item[(i)] $5(\ell-1)-1 < \frac{(\ell - 1) n}{c} < 5(\ell - 1)   < \frac{(\ell - 1) n}{b} \le 5(\ell - 1)+1$.

\item[(ii)] $5(\ell-1)-1 < \frac{(\ell - 1) n}{c} < 5(\ell - 1)   < 5(\ell - 1)+1 < \frac{(\ell - 1) n}{b} < 5(\ell - 1)+2$.

\item[(iii)] $5(\ell-1)-2 < \frac{(\ell - 1) n}{c} \le 5(\ell - 1)-1   < 5(\ell - 1) <  \frac{(\ell - 1) n}{b} \le 5(\ell - 1)+1$.
\end{itemize}

We  divide the proof into three subcases according to the above three situations.
\medskip

\noindent {\bf Subcase 3.1.} (i) holds. Then $ (5(\ell-1)-1)(b+a-1) = (5(\ell-1)-1)c < (\ell - 1) n  \le  (5(\ell - 1)+1) b$, so $(5\ell-6)(a-1) < 2b$.

If $\ell \ge 4$,   let $k=\ell$ and  $m$ be an integer in $[\frac{\ell n}{c}, \frac{\ell n}{b})$ which is co-prime to $n$. Note that $m \le 5\ell +1$. Then
\begin{align*}
ma \le (5\ell+1)a =\frac{5\ell+1}{5\ell-6} \time \frac{a}{a-1} \time  (5\ell-6)(a-1) < \frac{5\time 4+1}{5 \time 4-6 } \time \frac{3}{3-1} \time 2b < 5b \le n,
\end{align*}
so we are done. Therefore, we may assume that $ \ell =3$, so $13 < \frac{3n}{c} < 14 < 15 < 16 < \frac{3n}{b} < 17.$ Since $9 < \frac{2n}{c} < 10 < \frac{2n}{b} < 11,$ we have $ \frac{2n}{11} < b< c < \frac{2n}{9}$. Let $m=16$ and $\ k=3$. Then $ma = 16 \time (c-b+1) < 16 \time (\frac{2n}{9}-\frac{2n}{11}+1) < n$, and we are done.

\medskip

\noindent {\bf Subcase 3.2.} (ii) holds. Then $ (5(\ell-1)-1)(b+a-1) = (5(\ell-1)-1)c < (\ell - 1) n  <  (5(\ell - 1)+2) b$, so $(5\ell-6)(a-1) < 3b$.

If $\ell \ge 14$, then  let $k=\ell$ and let $m$ be an integer  in $[\frac{\ell n}{c}, \frac{\ell n}{b})$ which is co-prime to $n$. Note that $m \le 5\ell +1$. Thus
\begin{align*}
ma \le (5\ell+1)a =\frac{5\ell+1}{5\ell-6} \time \frac{a}{a-1} \time  (5\ell-6)(a-1) < \frac{5\time 14+1}{5 \time 14-6 } \time \frac{3}{3-1} \time 3b < 5b \le n,
\end{align*}
\noindent and we are done.

Next assume that $5 \le \ell \le 13$.
If $\gcd(5(\ell-1)+1, n)=1$, let $m=5(\ell-1)+1$ and $k= \ell -1$. Then
\begin{align*}
ma=(5(\ell-1)+1)a =\frac{5(\ell-1)+1}{5\ell-6} \time \frac{a}{a-1} \time  (5\ell-6)(a-1) < \frac{5\time 5-4}{5 \time 5-6 } \time \frac{3}{3-1} \time 3b < 5b \le n,
\end{align*}
and we are done. Hence we may assume that $ \gcd( 5(\ell-1)+1 , n) > 1$, which together with $\gcd (5, n) >1$ and $n =p_1^{\alpha} \cdot p_2^{\beta},$ implies  $\gcd (5\ell-1 , n)=1$. Now let $m = 5\ell-1$ and $ k= \ell$. Since $\frac{\ell n}{5\ell +2} < b < c < \frac{\ell n}{5\ell-2}$, we have $ ma = (5 \ell -1) (c-b+1) < (5 \ell -1)(\frac{\ell n}{5\ell-2} - \frac{\ell n}{5\ell +2} +1) < n$,  and we are done.

Finally, assume that $\ell \le 4$.

If $\ell=4$, then $14 < \frac{3n}{c} < 15 < 16 < \frac{3n}{b} < 17.$ It follows that $\frac{3n}{17} < b < c < \frac{3n}{14}$. Note that $\gcd(16, n)=1$. Let $m = 16$ and $k= 3$. Then $ma = 16 \time (c-b+1) \le 16 \time (\frac{3n-1}{14}-\frac{3n+1}{17}+1) < n$, and  we are done.

If $\ell=3$, we have $9 < \frac{2n}{c} < 10 < 11 < \frac{2n}{b} < 12$ and $13 < \frac{3n}{c} < 14 < 15 < \frac{3q}{b} < 18.$ Hence  $\frac{n}{6} < b < c < \frac{2n}{9}$. Now let $m=11$ and $k=2$ if $\gcd(11, n)=1$ or let $m=14$ and $k=3$ if $\gcd(n,14)=1$. Then  $ma \le 14a = 14 \time (c-b+1) \le 14 \time (\frac{2n-1}{9}-\frac{n+1}{6}+1) < n$, and we are done.

\medskip
\noindent {\bf Subcase 3.3.} (iii) holds. This subcase can be proved in a similar way to Subcase 3.2.

\end{proof}

\medskip

\begin{lemma}\label{case k=1 lemma 4}
If $6 < \frac{n}{c} \le 7  < \frac{n}{b} < 8$ and $7 \mid n$, then  $\ind(S)=1$.
\end{lemma}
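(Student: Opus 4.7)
The strategy will mirror the proof of Lemma \ref{case k=1 lemma 3} almost verbatim, with $m_1 = 5$ replaced by $m_1 = 7$ throughout. The hypothesis $6 < n/c \le 7 < n/b < 8$ gives $n > 7b$, and combined with $7 \mid n$, $n = p_1^{\alpha} p_2^{\beta}$, and $\gcd(n,6) = 1$, it forces one prime factor of $n$ to equal $7$ and the other to be a prime distinct from $2, 3, 7$. This structure lets me guarantee coprime integers in short intervals exactly as in Lemma \ref{case k=1 lemma 3}.

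First I would dispatch $\ell = 2$ directly: from $6 < n/c \le 7 < n/b < 8$ we get $12 < 2n/c \le 14 < 2n/b < 16$, so the hypothesis that $[\frac{2n}{c},\frac{2n}{b})$ contains at least three integers pins down the configuration (e.g.\ $\{13,14,15\}$ or $\{12,13,14,15\}$). Since $14 = 2\cdot 7$ is never coprime to $n$ but $12$ is always coprime to $n$ and at least one of $\{13,15\}$ is coprime as well (the second prime factor of $n$ can destroy at most one of them), I would pick such an $m$ together with $k = 2$, convert the interval bounds into sharp inequalities on $b$ and $c$, and verify $ma = m(c-b+1) < n$ via Lemma \ref{mainlemma} (1).

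For $\ell \ge 3$, I would replicate the minimality argument preceding Lemma \ref{case k=1 lemma 3} to conclude
\begin{equation*}
7\ell - 3 < \tfrac{\ell n}{c} < \tfrac{\ell n}{b} < 7\ell + 3,
\end{equation*}
and then split into three sub-configurations mirroring Cases 1, 2, 3 of Lemma \ref{case k=1 lemma 3}, according to how the interval meets $[7\ell-3, 7\ell+3]$. In each case the endpoint inequalities together with $a = c-b+1$ yield a bound of the shape $(7\ell - C)(a-1) < C' \cdot b$, so that for $\ell$ above an explicit threshold any coprime $m \in [\frac{\ell n}{c}, \frac{\ell n}{b})$ automatically satisfies $ma < 7b \le n$. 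For $\ell$ below that threshold I would drop down to a smaller window at some $\gamma < \ell$ which, by the minimality of $\ell$, contains at most two integers, and select whichever of those two is coprime to $n$; at most one can fail coprimality since $n$ has only two prime factors and the two candidates differ by $1$.

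I expect the main obstacle to be the small-$\ell$ endgame, specifically $\ell \in \{3,4\}$. There the candidate pool is very small, and since multiples of $7$ are now excluded in addition to multiples of the other prime factor of $n$, I may need a two-option fallback analogous to the ``$m=11$ or $m=14$'' trick at the end of Subcase 3.2 in Lemma \ref{case k=1 lemma 3}. Each such subcase reduces to a concrete arithmetic verification of $m(c-b+1) < n$ using explicit bounds of the form $\frac{\ell n}{7\ell+2} < b < c < \frac{\ell n}{7\ell-2}$, which telescope to $c-b < \frac{4\ell n}{49\ell^{2} - 4}$ and are easily checked against $n > 7b$ for each small $\ell$.
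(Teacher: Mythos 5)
Your outline matches the paper's proof of this lemma almost step for step: the paper also dispatches $\ell=2$ by pinning the window to $\{13,14,15\}$ and taking $m\in\{13,15\}$ with $k=2$ (discarding $14$ since $7\mid n$), and for $\ell\ge 3$ it splits into three cases by the position of $\frac{\ell n}{c}$ in $(7\ell-3,\,7\ell\,]$, derives bounds of exactly your shape, e.g.\ $(7\ell-1)(a-1)<4b$ or $(7\ell-8)(a-1)<3b$, and shows that above a small threshold in $\ell$ any coprime $m\le 7\ell+2$ in the window satisfies $ma<7b\le n$. One minor slip: $\{12,13,14,15\}$ is not a possible configuration for $\ell=2$, since $\frac{n}{c}>6$ forces $\frac{2n}{c}>12$; in particular $12$ is never available as a candidate, though your $\{13,15\}$ fallback is precisely what the paper uses.

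The genuine gap is in your small-$\ell$ endgame. You propose to drop to a window $[\frac{\gamma n}{c},\frac{\gamma n}{b})$ with $\gamma<\ell$, which by minimality of $\ell$ contains at most two integers, and to select ``whichever of those two is coprime to $n$, since they differ by $1$ and at most one can fail.'' But because $\frac{n}{c}\le 7<\frac{n}{b}$, the multiple $7\gamma$ lies in $[\frac{\gamma n}{c},\frac{\gamma n}{b})$ for \emph{every} $\gamma$, and $7\mid n$ makes it dead on arrival; a two-integer window therefore offers only one viable candidate, namely $7\gamma-1$ or $7\gamma+1$, and nothing prevents the second prime factor $p_2$ of $n$ from dividing it. So the stated mechanism can genuinely fail, and it is not what the paper does: the paper's coprimality selections stay inside windows containing at least three integers, where two candidates not divisible by $7$ are available (e.g.\ $7\ell+1$ and $7\ell+2$, or $7\ell-1$ and $7\ell+1$), and since these differ by $1$ or $2$ while $\gcd(n,6)=1$, the prime $p_2$ can kill at most one of them. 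For the residual $\ell=3$ subcases the paper moves \emph{outward} rather than inward: in Subcase 2.1 it takes $k=4$ and $m=27$, coprime automatically because $\gcd(n,6)=1$, and in Subcase 2.2 it uses the pair $\{20,22\}$ with $k=3$. Your anticipated two-option fallback in the spirit of the ``$m=11$ or $m=14$'' trick is the right instinct, since the two options must be chosen so that no single prime can kill both, but as written your $\gamma$-window principle is the step that would fail, and the $\ell\in\{3,4\}$ cases need explicit choices of the paper's kind in its place.
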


\begin{proof}
Since  $6 < \frac{n}{c} \le 7  < \frac{n}{b} < 8$,  $n > 7b $. Note that $m_1=7$.

If $\ell =2$, then $12 < \frac{2n}{c} \le 13  < 14 < 15 < \frac{2n}{b} \le 16.$
Since $ 12(b+a-1) = 12c < 2n \le 16b$, we have that $12(a-1) < 4b$. If $ \gcd(15, n)=1$, let  $m=15$ and $k=2$; otherwise let $m=13$ and $k = 2$. Then
\begin{align*}
ma \le 15a =\frac{15}{12} \time \frac{a}{a-1} \time 12(a-1) < \frac{15}{12} \time \frac{3}{3-1} \time 4b <7b<n,
\end{align*}
and we are done.

Next assume that $\ell \ge 3$. Recall that $7\ell -3 < \frac{\ell n}{c} \le 7 \ell < \frac{\ell n}{b} < 7\ell +3$. We distinguish three cases according to where $\frac{\ell n}{c}$ locates.

\medskip
\noindent {\bf Case 1.} $7\ell-1 < \frac{\ell n}{c} \le 7\ell$.

Then $7\ell-1 < \frac{\ell n}{c} \le 7\ell < 7\ell+1 < 7\ell+2 < \frac{\ell n}{b} \le 7\ell+3$. It follows that $(7\ell-1)(b+a-1) = (7\ell-1)c < \ell n \le (7\ell+3)b$, hence $(7\ell-1)(a-1) < 4b$. If $\gcd(7\ell +2, n)>1$, then $\gcd(7\ell +1, n)=1$, so let $m=7\ell +1$ and $k= \ell$; otherwise  let $m=7\ell+2$ and $k= \ell$. Then
\begin{align*}
ma \le (7\ell+2)a =\frac{7\ell+2}{7\ell-1} \time \frac{a}{a-1} \time  (7\ell-1)(a-1) < \frac{7\time 3+2}{7 \time 3-1 } \time \frac{3}{3-1} \time 4b < 7b \le n,
\end{align*}
\noindent and we are done.

\medskip
\noindent {\bf Case 2.} $7\ell-2 < \frac{\ell n}{c} \le 7\ell-1$. Since  $[\frac{\ell n}{c}, \frac{\ell n}{b})$ contains at most four integers, we distinguish two subcases.

\medskip
\noindent {\bf Subcase 2.1.} $[\frac{\ell n}{c}, \frac{\ell n}{b})$ contains four integers.

Then $7\ell-2 < \frac{\ell n}{c} \le 7\ell-1 < 7\ell < 7\ell +2 \le \frac{\ell n}{b} \le 7\ell+3$. It follows that $\frac{1}{\ell} \le 7-\frac{n}{c} < \frac{2}{\ell} < \frac{n}{b}-7 \le \frac{3}{\ell}$. Therefore, $(\ell-1)(\frac{n}{b}-7) > (\ell -1)\frac{2}{\ell} > 1 $. By the minimality of $\ell$ we have $$7(\ell-1)-1 < \frac{(\ell-1)n}{c} < 7(\ell-1) < 7(\ell-1)+1 < \frac{(\ell-1)n}{b} \le 7(\ell-1)+2.$$ Then $(7\ell-8)(b+a-1) = (7\ell-8)c < (\ell-1)n \le (7\ell-5)b $; hence $(7\ell-8)(a-1) < 3b$.

First assume that  $\ell \ge 4$.  If $\gcd(7\ell +1, n)>1$, then $\gcd(7\ell -1, n)=1$, so let $m=7\ell -1$ and $k=\ell$; otherwise let   $m=7\ell+1$ and $k = \ell$.
\begin{align*}
ma \le (7\ell+1)a =\frac{7\ell+1}{7\ell-8} \time \frac{a}{a-1} \time  (7\ell-8)(a-1) < \frac{7\time 4+2}{7 \time 4-8 } \time \frac{3}{3-1} \time 3b < 7b \le n,
\end{align*}
and we are done.

Next assume that $\ell = 3$. Then $13 < \frac{2n}{c} < 14 < 15 < \frac{2n}{b} \le 16$.
Note that $\frac{4n}{c} < 27 < 28 < \frac{4n}{b}$,  $\gcd(27, n)=1$ and $\frac{n}{8} < b <  c < \frac{2n}{13}$. Let $m = 27$ and $k = 4$. Then $ ma = 27 \time (c-b+1) \le 27 \time (\frac{2n-1}{13} - \frac{n+1}{8} +1) <n$, and we are done.

\medskip
\noindent {\bf Subcase 2.2.} $[\frac{\ell n}{c}, \frac{\ell n}{b})$ contains three integers.

Then $ 7\ell-2 < \frac{\ell n}{c} \le 7\ell-1 < 7\ell < 7\ell +1 < \frac{\ell n}{b} \le 7\ell+2.$ It follows that $(7\ell-2)(b+a-1) = (7\ell-2)c < \ell n \le (7\ell+2)b $; hence $(7\ell-2)(a-1) < 4b$.

First assume that  $\ell \ge 4$.  If $\gcd(7\ell +1, n)>1$, then $\gcd(7\ell -1, n)=1$, so let $m=7\ell -1$ and $k=\ell$; otherwise let   $m=7\ell+1$ and $k = \ell$. Then
\begin{align*}
ma \le (7\ell+1)a =\frac{7\ell+1}{7\ell-2} \time \frac{a}{a-1} \time  (7\ell-2)(a-1) < \frac{7\time 4+2}{7 \time 4-2 } \time \frac{3}{3-1} \time 4b < 7b \le n,
\end{align*}
and we are done.

Next assume that $\ell = 3$. Then $19 < \frac{3n}{c} < 20 < 21 < 22 < \frac{3n}{b} \le 23$. If $\gcd(n, 20)=1$, let $k=3$ and $m=20$; otherwise let $m = 22$ and $k = 3$. Note that  $\frac{3n}{23} < b < c < \frac{3n}{19}$. Then $ ma \le 22 \time (c-b+1) \le 22 \time (\frac{3n-1}{19} - \frac{3n+1}{23} +1) < n$, and we are done.

\medskip
\noindent {\bf Case 3.}  $7\ell-3 < \frac{\ell n}{c} \le 7\ell-2$. As in Case 2, we  distinguish two subcases.

\medskip
\noindent {\bf Subcase 3.1.}  $[\frac{\ell n}{c}, \frac{\ell n}{b})$ contains four integers.

It follows that   $7\ell-3 < \frac{\ell n}{c} \le 7\ell-2 < 7\ell < 7\ell +1 \le \frac{\ell n}{b} \le 7\ell+2$. The proof is similar to Subcase 2.1.

\medskip
\noindent {\bf Subcase 3.2.}  $[\frac{\ell n}{c}, \frac{\ell n}{b})$ contains three integers.

It follows that $7\ell < \frac{\ell n}{b} < 7\ell+1$.  Similar to Case 1, We can prove the result.

This completes the proof.
\end{proof}

Now we are in a position to prove Proposition \ref{case k = 1}.

\medskip

\noindent {\bf Proof of Proposition \ref{case k = 1}.}

Recall that either $m_1=5$ or $m_1=7$ or $m_1 \ge 10$. By Lemmas \ref{case k=1 lemma 3} and \ref{case k=1 lemma 4} we may assume  $m_1 \ge 10$. Then $n \ge m_1b \ge 10b$. Let $k=\ell$ and let $m$ be one of the integers  in $[\frac{\ell n}{c},\frac{\ell n}{b})$ which is co-prime to $n$. Recall that we have either \eqref{at most 4 integer big} holds or \eqref{at most 4 integer small} holds.

If \eqref{at most 4 integer big} holds, then $(\ell m_1-2)(b+a-1) = (\ell m_1-2)c < \ell n \le (\ell m_1+3)b$, so $(\ell m_1-2)(a-1) < 5b.$ Note that $m \le \ell m_1+2$ and $\ell \ge 2$, then
\begin{align*}
ma  \le ( \ell m_1+2 )a =\frac{\ell m_1+2}{\ell m_1-2} \time \frac{a}{a-1} \time  (\ell m_1-2)(a-1) < \frac{2\time 10+2}{2 \time 10-2 } \time \frac{3}{3-1} \time 5b < 10b \le n,
\end{align*}
and we are done.

If \eqref{at most 4 integer small} holds, then $(\ell m_1-3)(b+a-1) = (\ell m_1-3)c < \ell n \le (\ell m_1+2)b$, hence $(\ell m_1-3)(a-1) < 5b.$ Note that $m \le \ell m_1+1$ and $\ell \ge 2$. Then
\begin{align*}
ma  \le ( \ell m_1+1 )a =\frac{\ell m_1+1}{\ell m_1-3} \time \frac{a}{a-1} \time  (\ell m_1-3)(a-1) < \frac{2\time 10+1}{2 \time 10-3 } \time \frac{3}{3-1} \time 5b < 10b \le n,
\end{align*}
and we are done. \qed

\bigskip
\begin{center}
 3.2. Proof of Proposition \ref{case k > 1 and a is big}
\end{center}
\medskip

In this subsection, we always assume that $\lceil \frac{n}{c} \rceil = \lceil \frac{n}{b} \rceil$, so  $k_1 \ge 2$, and we also assume that $k_1 > \frac{b}{a}$.
\medskip

\noindent {\bf Proof of Proposition \ref{case k > 1 and a is big}}

We distinguish the proof to two cases.\\
\noindent {\bf Case 1.} $k_1=2$.

If $a-1 \ge \frac{b}{2}$, then $$\frac{(c-b)(k_1-1)n}{cb} = \frac{(a-1)n}{bc}  =\frac{ a-1}{b} \time \frac{ n}{c}>\frac{1}{2} \time 2=1,$$ which is a contradiction to \eqref{k1 is minimalty}.
Hence,  $\frac{b}{2}-1 <a-1 < \frac{b}{2}.$

If $b=2\ell$, then $\ell-1 < a-1 < \ell$, which is impossible. Therefore, we must have that $b=2\ell +1$, and thus $a=\ell+1$. Now $c = a+b-1 = 3\ell+1$ and $c-b = a-1 = \ell$.

If $n\ge 3c$, then $$\frac{(c-b)(k_1-1)n}{cb} = \frac{\ell n}{(2\ell+1)c} \ge \frac{3\ell}{2\ell+1} \ge  1,$$ which is
a contradiction to \eqref{k1 is minimalty}.

Thus $n < 3c$, so  we assume that $n=2c+\ell_0$ for some $\ell_0$ odd since $\gcd (n, 6)=1$. If $\ell_0=1$, then $n = 2c+1 = 2(3\ell+1)+1 = 3(2\ell+1)$, a contradiction (since $n$ is not divisible by 3). If $\ell_0=3$, then $n=2c+3$ and thus $\frac{n}{c}\le 3 < \frac{n}{b} = \frac{6\ell+5}{2\ell+1}.$ This implies that $\lceil \frac{n}{c} \rceil < \lceil \frac{n}{b} \rceil$, a contradiction. If $\ell_0\ge 5$, then $n\geq 2c+5=6\ell+7.$ Now $$\frac{(c-b)(k_1-1)n}{cb} = \frac{\ell n}{(2\ell+1)(3\ell+1)} \ge  \frac{\ell(6\ell+7)}{6\ell^2+5\ell+1}>1,$$ a contradiction to \eqref{k1 is minimalty}.

\medskip
\noindent {\bf Case 2.} $k_1\ge 3$.

If $a-1\ge \frac{b}{k_1}$, then $\frac{(c-b)(k_1-1)n}{cb} \ge  \frac{b(k_1-1)n}{cbk_1} > \frac{2(k_1-1)}{k_1} > 1,$ a contradiction.

Thus, we have that $\frac{b}{k_1}+1 > a > \frac{b}{k_1}$. Assume that $b=k_1 \ell + k_0$ for some $0\le k_0 < k_1$ and $\ell \ge 1$. Note that if $k_0=0$, then $\ell+1 > a > \frac{b}{k_1} = \ell$, a contradiction. Therefore, we have that $1\le k_0 < k_1$. Then $a=\ell+1$, so $c= a+b-1 = (k_1+1)\ell+k_0$ and also $c-b =a-1 =\ell$. Now \eqref{k1 is minimalty} reduces to the following.
\begin{equation}\label{minimality of k1 case2}
\frac{(c-b)(k_1-1)n}{cb}=\frac{(k_1-1)\ell n}{(k_1\ell+k_0)c}<1.
\end{equation}

If $\ell \ge 2$, then $k_1(\ell-1)-2\ell+1 \ge 3(\ell-1)-2\ell+1 = \ell-2 \ge 0$. This together with $\frac{n}{c}>2$ and $k_1-1\ge k_0$ implies that $\frac{(c-b)(k_1-1)n}{cb} \ge \frac{2(k_1-1)\ell}{k_1\ell+k_1-1} = \frac{2\ell k_1-2\ell}{k_1(\ell+1)-1} = 1+\frac{k_1(\ell-1)-2\ell+1}{k_1(\ell+1)-1} \ge 1$, which is a contradiction to \eqref{minimality of k1 case2}.

Finally, assume that $\ell=1$, so $a=2$. Therefore, the proposition follows from Lemma  \ref{small-a}.

This completes the proof.  \qed

\bigskip

\begin{center}
 3.3. Proof of Proposition \ref{case k > 1 and a is small}
\end{center}

\medskip

In this subsection, we always assume that $\lceil \frac{n}{c} \rceil = \lceil \frac{n}{b} \rceil$, so $k_1 \ge 2$, and we also assume that $k_1 \le \frac{b}{a}$. Since $a \le \frac{b}{k_1}$, by Lemma \ref{observation} we may assume that $\gcd(m_1, n)>1$ for every  $ m_1 \in [\frac{k_1n}{c}, \frac{k_1n}{b})$.

%

\bigskip
Since $s=\lfloor\frac{b}{a}\rfloor$, we have $2 \le k_1 \le s\le \frac{b}{a}<s+1$. By Lemma \ref{about-s} we may assume that for every  $t \in [0, \lfloor \frac{s}{2} \rfloor-1 ]$
\begin{equation}\label{no integer co-prime to n}
[\frac{(2s-2t-1)n}{2b}, \frac{(s-t)n}{b}] \mbox{ contains no integers  co-prime to } n.
\end{equation}

We divide the proof of Proposition \ref{case k > 1 and a is small} into the following  few lemmas.

\medskip

Recall that by Remark \ref{remark} (1), we may always assume that $n > 1000$. The next lemma provides an upper bound for $n$, which will be used frequently to obtain a contradiction by showing that $n \le 1000$.

\medskip

\begin{lemma}\label{bound of n}
If $u < \frac{n}{c} < \frac{n}{b} < v$ for some real numbers $u , v $ and $ u(k_1-1) > s+1$, then $$n < \frac{uv(k_1-1)(s+1)}{u(k_1-1)-(s+1)}.$$
\end{lemma}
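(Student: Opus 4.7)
The plan is to combine three inequalities that are already available or immediate from the setup: the minimality estimate \eqref{k1 is minimalty}, the definition of $s=\lfloor b/a\rfloor$, and the hypothesis $n/c<v$ together with $n/b<v$ of the lemma. From \eqref{k1 is minimalty} we have the clean algebraic inequality $(k_1-1)(a-1)n<bc$, which is the engine of everything that follows.

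First I would convert the size relation between $a$ and $b$ into a lower bound on $a-1$. Since $s<s+1$ and $s=\lfloor b/a\rfloor$ gives $b/a<s+1$, we get $a>b/(s+1)$, so $a-1>(b-s-1)/(s+1)$. Substituting this into $(k_1-1)(a-1)n<bc$ produces
\[
\frac{(k_1-1)(b-s-1)n}{s+1}<bc,
\]
and dividing by $b$ yields the lower bound $c>(k_1-1)(b-s-1)n/[b(s+1)]$.

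Next I would eliminate $c$ using the hypothesis $n/c>u$, i.e.\ $c<n/u$. Combining this with the previous lower bound on $c$ and cancelling the common factor $n$ gives $u(k_1-1)(b-s-1)<b(s+1)$. Regrouping the $b$-terms on the left,
\[
b\bigl[u(k_1-1)-(s+1)\bigr]<u(k_1-1)(s+1).
\]
This is where the hypothesis $u(k_1-1)>s+1$ finally gets used: it allows us to divide by the positive quantity $u(k_1-1)-(s+1)$ without flipping the inequality, yielding an explicit upper bound on $b$. Finally, invoking the other half of the hypothesis, namely $n/b<v$ (so $b<n/u$ actually isn't needed; what is needed is $b>n/v$), we get $n/v<b<u(k_1-1)(s+1)/[u(k_1-1)-(s+1)]$, which rearranges to the claimed bound on $n$.

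There is essentially no obstacle here; the whole argument is a short algebraic manipulation. The only point that requires a bit of care is to keep track of which direction each inequality goes and to ensure the divisor $u(k_1-1)-(s+1)$ is positive before dividing by it — which is precisely the content of the technical hypothesis $u(k_1-1)>s+1$.
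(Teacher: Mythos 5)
Your proof is correct and takes essentially the same route as the paper's: both rest on the same three ingredients --- the minimality inequality \eqref{k1 is minimalty} in the form $(k_1-1)(a-1)n<bc$, the bound $a>\frac{b}{s+1}$ coming from $s=\lfloor b/a\rfloor$, and the hypotheses $\frac{n}{c}>u$ and $\frac{n}{b}<v$ --- combined by the same algebra. The only difference is presentational: the paper argues by contradiction (assuming $n$ at least the claimed bound, deducing $b\ge \frac{u(k_1-1)(s+1)}{u(k_1-1)-(s+1)}$ and hence $\frac{(k_1-1)n(a-1)}{bc}\ge 1$), while you run the same chain directly to the upper bound on $b$ and then on $n$, correctly using the positivity of $u(k_1-1)-(s+1)$ at the one step where it matters.
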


\begin{proof} Assume to the contrary that $n \ge  \frac{uv(k_1-1)(s+1)}{u(k_1-1)-(s+1)}$. Then $ b > \frac{n}{v} \ge \frac{u(k_1-1)(s+1)}{u(k_1-1)-(s+1)}.$ Since $  \frac{b}{a} < s+1$,  $a > \frac{b}{s+1}$. Therefore,
\begin{align*}
& \frac{(k_1-1)n(a-1)}{bc} = \frac{k_1-1}{b} \time \frac{n}{c} \time (a-1) > \frac{k_1-1}{b}\time u \time ( \frac{b}{s+1} -1) \\ = & \frac{u(k_1-1)}{s+1} \time \frac{b-(s+1)}{b} \ge  \frac{u(k_1-1)}{s+1} \time \frac{\frac{u(k_1-1)(s+1)}{u(k_1-1)-(s+1)}-(s+1)}{\frac{u(k_1-1)(s+1)}{u(k_1-1)-(s+1)}} \\ = & 1,
\end{align*}
yielding a contradiction to \eqref{k1 is minimalty}.
\end{proof}

\bigskip

\begin{lemma}\label{k < 4}
If the assumption is as in Proposition \ref{case k > 1 and a is small}, then $k_1 \le 3$.
\end{lemma}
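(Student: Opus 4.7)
My plan is to argue by contradiction: suppose $k_1 \ge 4$. Since $k_1 \le b/a$ and $s = \lfloor b/a \rfloor \le 7$, this forces $s \ge k_1 \ge 4$, so Lemma~\ref{small-s}(ii) gives $n/b < 4$, sharpened to $n/b < 14/5$ when $s \ge 6$ by (iv); from $c < n/2$ we also have $n/c > 2$.

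The first stage is to apply Lemma~\ref{bound of n} with $u = 2$ and $v$ being the upper bound above on $n/b$. A routine case check over the pairs $(s, k_1)$ with $4 \le k_1 \le s \le 7$ shows this kills every pair except the four $(s, k_1) \in \{(5,4), (6,4), (7,4), (7,5)\}$, where $u(k_1-1) \le s+1$ and the lemma either fails or degenerates.

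For $(5,4), (6,4), (7,4)$, I will inspect the integer $m_4 \in [4n/c, 4n/b)$ guaranteed by the definition of $k_1$. When $n/b < 3$ (always in $(6,4)$ and $(7,4)$, and when $\lceil n/c \rceil = \lceil n/b \rceil = 3$ in $(5,4)$), one has $m_4 \in \{9, 10, 11\}$; since $\gcd(n,6) = 1$ and $\gcd(m_4,n) > 1$, we rule out $m_4 = 9$, and $m_4 = 10$ would place $5$ inside $[2n/c, 2n/b)$, contradicting that this subinterval contains no integer (since $k_1 = 4$). Hence $m_4 = 11$, so $n/c \le 11/4 < n/b$; an identical analysis on $[3n/c, 3n/b)$ (excluding the integer $8$) yields $n/c > 8/3$. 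With this refined $u = 8/3$, Lemma~\ref{bound of n} now succeeds for $(5,4)$ and $(6,4)$, producing $n < 1000$. The leftover alternative in $(5,4)$, where $\lceil n/c \rceil = \lceil n/b \rceil = 4$ so $n/c, n/b \in (3, 4)$, is handled directly by Lemma~\ref{bound of n} with $u = 3, v = 4$. For $(7,4)$, however, $u(k_1-1) = 8 = s+1$ and the lemma degenerates, so I instead appeal to Lemma~\ref{small-s}(iv) at $t = 0$: the unique integer $x \in [13n/(2b), 7n/b]$ must be divisible by $5$, but a direct calculation using $n/b \in (11/4, 14/5)$ confines $x$ to $\{18, 19\}$, neither of which is a multiple of $5$.

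For $(7, 5)$, the no-integer requirement on $[4n/c, 4n/b)$ splits the situation according to $n/b$ lying in $(2, 9/4], (9/4, 5/2], (5/2, 11/4]$, or $(11/4, 14/5)$. In the last three sub-ranges, $n/c$ shares the same lower endpoint as $n/b$, so $u$ in Lemma~\ref{bound of n} can be taken to be $9/4, 5/2, 11/4$ respectively, each yielding $n < 1000$. The genuine obstacle is the first sub-range, further narrowed to $n/b \in (11/5, 9/4]$ by $m_5 = 11$: here Lemma~\ref{small-s}(iv) pins the unique integer $x \in [13n/(2b), 7n/b]$ to $15$ and thus demands $z = x - 5 = 10 \in [9n/(2b), 5n/b]$, which forces $n/b \le 20/9$; but in that sub-range the integer $11$ also lies in the $z$-interval, yielding two integers where Lemma~\ref{small-s}(ii) allows only one. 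This completes all cases and establishes $k_1 \le 3$, with the main difficulty being the degenerate pair $(7, 4)$, cracked not by a size bound but by the divisibility obstruction supplied by Lemma~\ref{small-s}(iv).
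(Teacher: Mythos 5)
Your overall architecture (a Lemma~\ref{bound of n} sweep over the pairs $(s,k_1)$ with $4\le k_1\le s\le 7$, followed by ad hoc treatment of the degenerate pairs $(5,4),(6,4),(7,4),(7,5)$) matches the paper's strategy, and most of your individual computations check out — including the $(7,4)$ kill via Lemma~\ref{small-s}(iv) and your $(7,5)$ argument, which is a valid variant of the paper's Case~2 (the paper instead forces $m_5=10$ and gets the contradiction $\frac{n}{c}\le 2$). But there is a genuine gap at the step where you exclude $m_4=10$: you claim that $4n/c\le 10<4n/b$ would put $5$ inside $[\frac{2n}{c},\frac{2n}{b})$ and that this interval ``contains no integer (since $k_1=4$).'' That is not what the definition of $k_1$ gives you. $k_1$ is the \emph{largest} $k$ with $\lceil\frac{(k-1)n}{c}\rceil=\lceil\frac{(k-1)n}{b}\rceil$ and an integer in $[\frac{kn}{c},\frac{kn}{b})$; for $k_1=4$ this guarantees only that $[\frac{3n}{c},\frac{3n}{b})$ is integer-free (and $[\frac{n}{c},\frac{n}{b})$ by the hypothesis of Proposition~\ref{case k > 1 and a is small}). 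The interval $[\frac{2n}{c},\frac{2n}{b})$ may well contain an integer: if it does, $k=2$ also satisfies the defining condition, but the largest such $k$ is still $4$, so no contradiction arises. Indeed the paper's own proof of this lemma treats $9<\frac{4n}{c}\le 10<\frac{4n}{b}<12$ — precisely the configuration with $5\in[\frac{2n}{c},\frac{2n}{b})$ — as a live branch, and it is the hardest part of the whole proof.

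Concretely, in that branch with $s\in\{6,7\}$ no size bound works ($u=9/4$ gives $u(k_1-1)=\frac{27}{4}<s+1$), and the paper needs a multi-step divisibility argument: first show $\frac{4n}{b}\le 11$ (otherwise $\gcd(10,n)>1$ and $\gcd(11,n)>1$ force $n=5^\alpha 11^\beta$, and then $[\frac{11n}{2b},\frac{6n}{b}]\subset(15,18)$ contains an integer coprime to $n$, violating \eqref{no integer co-prime to n}); then pin the unique integer of $[\frac{9n}{2b},\frac{5n}{b}]$ to $13$, refining $\frac{8}{3}<\frac{n}{b}\le\frac{11}{4}$; and finally conclude $16\in[\frac{11n}{2b},\frac{6n}{b}]$, contradicting \eqref{no integer co-prime to n} since $\gcd(16,n)=1$. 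Your proof replaces all of this with the invalid inference above. The omission is repairable for $(s,k_1)=(5,4)$ — there, $9\notin[\frac{4n}{c},\frac{4n}{b})$ forces $\frac{4n}{c}>9$, and Lemma~\ref{bound of n} with $u=\frac{9}{4}$, $v=3$ gives $n<162$ — but for $s\in\{6,7\}$ the $m_4=10$ branch must be handled by an argument of the paper's kind, not excluded a priori.
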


\begin{proof}
 Assume to the contrary that $k_1 \ge 4$. Recall that $k_1 \le s\le 7$.  We distinguish three cases according to the value of $k_1$.

\medskip
\noindent {\bf Case 1.} $ 6 \le k_1 \le 7$. Then $6 \le k_1 \le s \le 7 $. By Lemma \ref{small-s} (iv)  we have that  $5 \mid n$ and $\frac{n}{b} < \frac{14}{5}.$  Thus  $2 < \frac{n}{c} < \frac{n}{b} < \frac{14}{5}.$ Applying Lemma \ref{bound of n} with $u=2$ and $v=\frac{14}{5}$, we infer that $n < 112$, yielding a contradiction Remark \ref{remark} (1).

\medskip
\noindent {\bf Case 2.} $ k_1=5$. Then $5 = k_1 \le s \le 7$.  By Lemma \ref{small-s} (ii) we have $\frac{n}{b} < 4$.

If $3 <  \frac{n}{b} < 4,$ since $\lceil \frac{n}{c} \rceil = \lceil \frac{n}{b} \rceil$, we have $3 < \frac{n}{c} < \frac{n}{b} < 4.$ Applying Lemma \ref{bound of n} with $u=3$ and $v=4$, we infer that $n < 84$, yielding a contradiction again.

Now assume that $2 <  \frac{n}{b} < 3.$ Since $\lceil \frac{n}{c} \rceil = \lceil \frac{n}{b} \rceil$, we have $2 < \frac{n}{c} < \frac{n}{b} < 3.$ If $s \le 6$. Then applying Lemma \ref{bound of n} with $u=2$ and $v=3$, we infer that $n < 168$, yielding a contradiction. If $s=7$, by Lemma \ref{small-s} (ii) and (iv) we conclude that $[\frac{(2s-2t-1)n}{2b}, \frac{(s-t)n}{b}]$ contains exactly one integer for each $t \in [0, \lfloor \frac{s}{2} \rfloor-1 ]$ and  the integer in $[\frac{9n}{2b},  \frac{5n}{b}]$ is divisible by $5$. Note that $ 9 < \frac{9n}{2b} < \frac{27}{2}$ and $10 < \frac{5n}{b} < 15$. We infer that $ 9 < \frac{9n}{2b} \le 10 < \frac{5n}{b} < 11$. By the definition of $k_1$, $[\frac{5n}{c}, \frac{5n}{b})$ contains an integer, so  we conclude that $  \frac{5n}{c} \le 10 < \frac{5n}{b} < 11$. Thus $\frac{n}{c} \le 2$, yielding a contradiction to that $c < \frac{n}{2}$.

\medskip
\noindent {\bf Case 3.} $ k_1=4$. Then $4 = k_1 \le s \le 7$.  By Lemma \ref{small-s} (ii) we have $\frac{n}{b} < 4$. We divide the proof into two subcases.

\medskip
\noindent {\bf Subcase 3.1.} $3 <  \frac{n}{b} < 4.$ Since $\lceil \frac{n}{c} \rceil = \lceil \frac{n}{b} \rceil$, we have $3 < \frac{n}{c} < \frac{n}{b} < 4.$ Applying Lemma \ref{bound of n} with $u=3$ and $v=4$, we infer that $n < 126$, a contradiction.

\medskip
\noindent {\bf Subcase 3.2.} $2 <  \frac{n}{b} < 3.$ Since $\lceil \frac{n}{c} \rceil = \lceil \frac{n}{b} \rceil$, we have $2 < \frac{n}{c} < \frac{n}{b} < 3,$  so $8 < \frac{4n}{c} \le m_1 < \frac{4n}{b} <12.$ Since $\gcd(n, 6)>1$ and $\gcd(n, m_1)>1$, we infer that $m_1\in [10, 11]$, so we have either $ 9 < \frac{4n}{c} \le 10 <  \frac{4n}{b} < 12$ or $ 10 < \frac{4n}{c} \le 11  < \frac{4n}{b} < 12$.

First assume that  $ 9 < \frac{4n}{c} \le 10 <  \frac{4n}{b} < 12$. If $s \le 5$, then applying Lemma \ref{bound of n} with $u= \frac{9}{4} $ and $v=3$, we infer that $n < 162$, a contradiction. Now assume that  $s=6$ or $s=7$.  We shall show that $ \frac{4n}{b} \le 11$. Assume to the contrary that  $\frac{4n}{b} >11$. Then  $\frac{11}{4} < \frac{n}{b} < 3$ and $[10,11]\subset  [\frac{4n}{c}, \frac{4n}{b})$. Since $\gcd(n, m_1)>1$ for every integer $m_1\in [\frac{4n}{c}, \frac{4n}{b})$, we have $\gcd(n, 10) > 1$ and $\gcd(n, 11) > 1$. Since $n=p_1^{\alpha} \cdot p_2^{\beta}$, we infer that $n = 5^{\alpha} \cdot 11^{\beta}$. Since $ 15 < \frac{121}{8} < \frac{11n}{2b} < \frac{6n}{b} < 18$, we conclude that  $[\frac{11n}{2b},   \frac{6n}{b}]$ contains an integer which is co-prime to $n$, a contradiction to \eqref{no integer co-prime to n}. Thus we must have $  \frac{4n}{b} \le 11$. Then  $\frac{5}{2} < \frac{n}{b} \le \frac{11}{4}$, which  implies that   $11 < \frac{45}{4} < \frac{9n}{2b} < \frac{5n}{b} < \frac{55}{4} < 14$. Suppose $x \in [\frac{9n}{2b}, \frac{5n}{b}]\subset [12, 13]$. By \eqref{no integer co-prime to n} we have $\gcd(n, x)>1$. Since $\gcd(n, 6) =1$, we infer that $x=13$ and hence $ 12 < \frac{9n}{2b} \le 13 \le \frac{5n}{b} < 14$. Thus $ \frac{8}{3} < \frac{n}{b} \le \frac{11}{4}$, which implies that $ \frac{11n}{2b} \le \frac{121}{8} < 16 < \frac{6n}{b}$. Therefore, $16 \in [\frac{11n}{2b} , \frac{6n}{b}]$. By \eqref{no integer co-prime to n}, we obtain that $\gcd(n, 16) > 1$, a contradiction to  $\gcd(n, 6)=1$.

Next assume that $ 10 < \frac{4n}{c} \le 11  < \frac{4n}{b} <12$. If $s \le 6$, then applying Lemma \ref{bound of n} with $u= \frac{5}{2} $ and $v=3$, we infer that $n < 315$. Now assume that  $s=7$.  Note that $\frac{11}{4} \le \frac{n}{b} < 3$. Then $ 12 < \frac{99}{8} \le \frac{9n}{2b} < \frac{5n}{b} < 15$ and thus $[\frac{9n}{2b}, \frac{5n}{b}] \subset [13, 14]$. However, by  Lemma \ref{small-s} (iv),  we obtain that   $[\frac{9n}{2b},  \frac{5n}{b}]$ contains an integer which is divisible by $5$,  yielding a contradiction.
\end{proof}

\medskip
\begin{lemma}\label{k = 3}
If $k_1=3$, then $\ind(S)=1$.
\end{lemma}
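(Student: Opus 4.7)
My plan is to mirror the case analysis used in Lemma \ref{k < 4}: a case split on $s$ and on the common ceiling $m := \lceil n/c\rceil = \lceil n/b\rceil$. Since $k_1 = 3 \le s \le 7$ by Lemma \ref{small-s}(v) and since $b, c < n/2$ forces $m \ge 3$, the structural bounds from Lemma \ref{small-s} restrict $m$: $m \le 6$ always by (i), $m \le 4$ when $s \ge 4$ by (ii), and $m = 3$ when $s \ge 6$ by (iv). Because $3n/c,\ 3n/b \in (3m-3,\ 3m]$, any integer $m_1 \in [3n/c,\ 3n/b)$ lies in $\{3m-2,\ 3m-1\}$; combined with the standing hypothesis $\gcd(m_1, n) > 1$ and $\gcd(n, 6) = 1$, this forces a specific prime $\ge 5$ to divide $n$ in each sub-case (for instance, $m = 3$ forces $7 \mid n$; $m = 4$ forces $5 \mid n$ or $11 \mid n$).

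For each pair $(s, m)$ I would first attempt to apply Lemma \ref{bound of n} using the sharper bound $n/c \le (3m-2)/3$ (which follows from $m_1 \ge 3m-2 \ge \lceil 3n/c \rceil$) in place of the coarse $n/c > m-1$. This strengthens the key hypothesis $u(k_1-1) > s+1$ enough to cover all sub-cases with $m \ge 4$, or with $s$ sufficiently large relative to $m$, yielding $n \le 1000$ and contradicting Remark \ref{remark}(1).

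The main obstacle is the remaining narrow band of sub-cases with $m = 3$. For $s \in \{6, 7\}$ I would combine Lemma \ref{small-s}(iii) and (iv): part (iv) forces $5 \mid n$, so the multiples of $5$ in the consecutive intervals $[\frac{11n}{2b},\ \frac{6n}{b}]$ and $[\frac{9n}{2b},\ \frac{5n}{b}]$ (which in the feasible $n/b$-range can only be $15$ and $10$) would then violate $\gcd(x, y, n) = 1$ from (iii), giving a contradiction that is moreover compatible with the requirement $m_1 = 7 \in [3n/c,\ 3n/b)$. For $s \in \{3, 4, 5\}$ with $m = 3$, I would write $n = 7^\alpha p^\beta$ for some prime $p \ge 5$ with $p \ne 7$, and locate an integer co-prime to $n$ in an interval $[\ell n/c,\ \ell n/b)$ with $\ell \ne 3$ (the natural candidates being $\ell \in \{2, 4, 5\}$) satisfying the hypotheses of Lemma \ref{mainlemma}(1); the inequality $ma < n$ is feasible because $a \le b/s$ is small. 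The trickiest regime is when $n/c$ is close to $2$, where Lemma \ref{bound of n} is inconclusive and one must exploit the fine prime structure of $n$ together with explicit constructions.
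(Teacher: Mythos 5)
Your outline follows the same broad skeleton as the paper's proof (case split on $s$ and on the common ceiling, Lemma~\ref{bound of n} for the bulk, Lemma~\ref{small-s} plus explicit pairs $(m,k)$ for the remainder), but several of its concrete steps fail. First, the ``sharper bound'': from $m_1\in\{3m-2,3m-1\}$ one may only conclude $n/c\le (3m-1)/3$; your inequality $3m-2\ge\lceil 3n/c\rceil$ holds exactly when $3m-2$ itself lies in $[\frac{3n}{c},\frac{3n}{b})$, and in any case it is an \emph{upper} bound on $n/c$, whereas Lemma~\ref{bound of n} consumes a \emph{lower} bound $u<n/c$. The legitimate refinement $n/c>(3m-2)/3$ is available only in the subcase where $3m-2$ lies below the interval. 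Consequently your claim that Lemma~\ref{bound of n} disposes of all subcases with $m\ge 4$ is false: for $m=4$, $s=5$ with $\frac{3n}{c}\le 10<\frac{3n}{b}$ one only has $u=3$, and then $u(k_1-1)=6=s+1$, so the lemma's hypothesis fails. This is precisely where the paper needs a genuinely different argument: it first shows $\frac{3n}{b}<11$ using that $[\frac{2n}{c},\frac{2n}{b})$ contains no integer, then places $[\frac{9n}{2b},\frac{5n}{b}]\subset[16,18]$ and $[\frac{7n}{2b},\frac{4n}{b}]\subset[12,14]$, and with Lemma~\ref{small-s}(iii) writes $p_1\mid x$, $p_2\mid y$, while $m_1=10$ forces $5\mid n$, so $5\in\{p_1,p_2\}$ divides $xy$ --- contradicting $5\nmid xy$. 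Your plan has no mechanism for this subcase.

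Second, you never exploit what $k_1=3$ actually gives: $[\frac{2n}{c},\frac{2n}{b})$ contains \emph{no} integers (so $\ell=2$ in your candidate list is vacuous), while $[\frac{kn}{c},\frac{kn}{b})$ contains an integer for every $k\ge 3$. Both facts are pivotal in the $m=3$ cases: for $s=4$ the paper pins $[\frac{7n}{2b},\frac{4n}{b}]=\{10\}$, rules out $\frac{4n}{c}\le 10$ via the empty $2$-interval, and then $10<\frac{4n}{c}<\frac{4n}{b}<11$ contradicts the nonemptiness of $[\frac{4n}{c},\frac{4n}{b})$ --- the configuration is impossible, so no construction of the type you propose can be ``found''; the resolution is interval bookkeeping, not Lemma~\ref{mainlemma}(1). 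Likewise for $s=5$ the paper reaches a contradiction ($9\in[\frac{7n}{2b},\frac{4n}{b}]$ against \eqref{no integer co-prime to n}), not a construction. Finally, your $s\in\{6,7\}$ argument misreads Lemma~\ref{small-s}(iv): it constrains intervals \emph{two apart} ($x$ and $z=x-5$), not adjacent ones, so it does not force a multiple of $5$ in $[\frac{9n}{2b},\frac{5n}{b}]$; in the actual range $\frac{7}{3}<\frac{n}{b}<\frac{8}{3}$ that interval can contain $11,12,13$, and no violation of (iii) follows as you sketch. The paper instead first forces $\frac{n}{b}<\frac{12}{5}$ (via $12$ and \eqref{no integer co-prime to n}), after which for $s=7$ the unique integer in $[\frac{13n}{2b},\frac{7n}{b}]$ is $16$, contradicting the divisibility by $5$ required in (iv), and for $s\in\{5,6\}$ the integer $9$ lies in $[\frac{7n}{2b},\frac{4n}{b}]$, contradicting $\gcd(n,6)=1$. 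Only your treatment of $s=3$, $m=3$ (split on whether $n/c$ exceeds $\frac{9}{4}$, with an explicit construction in the low range) tracks the paper's actual argument.
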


\begin{proof}

We remark that since $k_1=3$, we conclude that $[ \frac{2n}{c}, \frac{2n}{b})$ contains no integers and $[ \frac{k_2n}{c}, \frac{k_2n}{b})$ contains at least one integer where $k_2\ge 3$.

Note that $3= k_1 \le s \le 7$. By Lemma \ref{small-s} (i) we have   $ \frac{n}{b} < 6$.  We distinguish three cases according to the value of $\frac{n}{b}$.

\noindent {\bf Case 1.} $4 < \frac{n}{b} < 6.$ By Lemma \ref{small-s} (ii) we must have  $s=3$. Since $\lceil \frac{n}{c} \rceil = \lceil \frac{n}{b} \rceil$, we infer that $ 4 < \frac{n}{c} < \frac{n}{b} < 6$. Applying Lemma \ref{bound of n} with $u=4$ and $v=6$, we infer that $n < 48$, a contradiction to Remark \ref{remark} (1).

\medskip
\noindent {\bf Case 2.} $3 < \frac{n}{b} < 4.$ By Lemma \ref{small-s} (iv), we must have  $ s \le 5$.

Since $\lceil \frac{n}{c} \rceil = \lceil \frac{n}{b} \rceil$, we have $3 < \frac{n}{c} < \frac{n}{b} < 4$  and thus $9 < \frac{3n}{c} \le m_1 < \frac{3n}{b} <12.$ We infer that $m_1\in [10, 11]$, so we have either  $ 9 < \frac{3n}{c} \le 10 <  \frac{3n}{b} < 12$ or $ 10 < \frac{3n}{c} \le 11  < \frac{3n}{b} < 12$. If $s \le 4$, then applying Lemma \ref{bound of n} with $u= 3 $ and $v=4$, we infer that $n < 120$, a contradiction. Next assume that $s=5$.

Suppose that $ 9 < \frac{3n}{c} \le 10 <  \frac{3n}{b} < 12$. We will show that $\frac{3n}{b} < 11$. Assume to the contrary that $\frac{3n}{b} \ge 11$, then $\frac{2n}{c} \le \frac{20}{3} < 7 < \frac{22}{3} < \frac{2n}{b}$, which is impossible since $[ \frac{2n}{c}, \frac{2n}{b})$ contains no integers. Therefore, we have that $\frac{3n}{b} < 11$, and then $  \frac{10}{3} < \frac{n}{b} < \frac{11}{3}$. Hence $15  < \frac{9n}{2b} < \frac{5n}{b} < \frac{55}{3} < 19$ and $11 < \frac{35}{3} < \frac{7n}{2b} < \frac{4n}{b} < \frac{44}{3} < 15$. Thus $[\frac{9n}{2b}, \frac{5n}{b}] \subset [ 16, 18]$ and $[\frac{7n}{2b}, \frac{4n}{b}] \subset [ 12, 14]$. Suppose $x\in [\frac{9n}{2b}, \frac{5n}{b}]$ and $y \in [\frac{7n}{2b}, \frac{4n}{b}]$. Then $5 \nmid xy$. Since $n=p_1^{\alpha} \cdot p_2^{\beta}$, by Lemma \ref{small-s} (iii) we may assume that $p_1 \mid x$ and $p_2 \mid y$. Hence $p_1p_2 \mid xy$. Note that $m_1=10$. Since $\gcd(n, m_1)>1$ and $\gcd(n, 6)=1$, we have $5 \mid n$. Hence $5 \in \{p_1, p_2\}$ and thus $5 \mid xy$, yielding a contradiction.

Suppose that $ 10 < \frac{3n}{c} \le 11  < \frac{3n}{b} < 12$. Then applying Lemma \ref{bound of n} with $u= \frac{10}{3} $ and $v=4$, we infer that $n < 240$, a contradiction.

\medskip
\noindent {\bf Case 3.} $2 < \frac{n}{b} < 3.$ Since $\lceil \frac{n}{c} \rceil = \lceil \frac{n}{b} \rceil$, we have $2 < \frac{n}{c} < \frac{n}{b} <3,$ and thus $6 < \frac{3n}{c} \le m_1 <  \frac{3n}{b} < 9.$ Since $\gcd(n, 6)>1$ and $\gcd(n, m_1)>1$, we infer that $m_1=7$ and thus $6 < \frac{3n}{c} \le 7 < \frac{3n}{b} < 8$.  Therefore, $7 \mid n$ and $  \frac{7}{3} < \frac{n}{b} < \frac{8}{3}$.

First assume that $s \ge 5$.  Note that $ \frac{9n}{2b} < 12 $. We will show that $\frac{5n}{b} < 12$. If $ \frac{5n}{b} \ge 12$, then $12 \in [\frac{9n}{2b}, \frac{5n}{b} ]$. By \eqref{no integer co-prime to n}, we have $\gcd(n, 12) >1$, a contradiction to that $\gcd(n , 6) =1$. Therefore   $\frac{7}{3} < \frac{n}{b} < \frac{12}{5}$.

If $s=7$, by  Lemma \ref{small-s} (iv),  we have that the integer in $[\frac{13n}{2b},  \frac{7n}{b}]$ is divisible by $5$, which is impossible because   $ 15 <  \frac{13n}{2b} < \frac{7n}{b}  < 17$.

If $5 \le s \le 6$, we have that $ 8 < \frac{49}{6} < \frac{7n}{2b}  < \frac{4n}{b} < \frac{48}{5} < 10$. Hence $9 \in  [\frac{7n}{2b}  , \frac{4n}{b} ]$. By \eqref{no integer co-prime to n}, we have $\gcd(n, 9) >1$, a contradiction again.

Next assume that $s=4$. We have that $ 8 < \frac{49}{6} < \frac{7n}{2b}  < \frac{4n}{b} < \frac{32}{3} < 11$ and thus $[\frac{7n}{2b} , \frac{4n}{b}]\subset [9, 10]$. By \eqref{no integer co-prime to n} and $\gcd(n,6)=1$, we have $[\frac{7n}{2b} , \frac{4n}{b}]= \{ 10\}$ and thus $ 9  < \frac{7n}{2b} \le 10 \le  \frac{4n}{b} < 11$, so $\frac{18}{7} < \frac{n}{b} < \frac{8}{3}$. If $\frac{4n}{c} \le  10$, then $\frac{2n}{c} \le  5 < \frac{2n}{b} $, which is impossible since  $[ \frac{2n}{c}, \frac{2n}{b})$ contains no integers. So we must have that  $ 10 < \frac{4n}{c}  < \frac{4n}{b} < 11$, yielding a contradiction to the assumption  that $[ \frac{4n}{c}, \frac{4n}{b})$ contains an integer.

Finally, assume that $s=3$.  If $\frac{9}{4} < \frac{n}{c} < \frac{n}{b} < 3$, then applying Lemma \ref{bound of n} with $u= \frac{9}{4} $ and $v=3$, we infer that $n < 108$, a contradiction. Thus  $\frac{n}{c} \le \frac{9}{4},$ and then $\frac{2n}{c} \le \frac{9}{2} < 5$. Since $[ \frac{2n}{c}, \frac{2n}{b})$ contains no integers, we have $\frac{2n}{c} < \frac{2n}{b} \le 5$. Since $\frac{n}{b} > \frac{7}{3} > \frac{9}{4}$, we have that $\frac{4n}{c} \le 9 < \frac{4n}{b}$. Let $m=9$ and $k=4$. Since $\frac{2n}{5} < b < c < \frac{n}{2}$,  $ma = 9 \time ( c- b+1) \le 9\time (\frac{n-1}{2}-\frac{2n}{5} +1 ) < n$, so the lemma follows from Lemma \ref{mainlemma} (1).
\end{proof}

\begin{lemma}\label{k = 2 and 5 | n}
Let $k_1=2$,  $4 < \frac{2n}{c} \le 5 < \frac{2n}{b} < 6$ and $a \le \frac{b}{2}$. Then $\ind(S)=1$.
\end{lemma}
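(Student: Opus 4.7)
My plan has three main steps. First, I pin down the arithmetic of $n$. Since $4 < \frac{2n}{c} \le 5 < \frac{2n}{b} < 6$, the unique integer lying in $[\frac{2n}{c}, \frac{2n}{b})$ is $5$, and the standing assumption that $\gcd(m_1,n)>1$ for every such $m_1$ forces $5 \mid n$. Combined with $n = p_1^{\alpha}p_2^{\beta}$ and $\gcd(n,6)=1$, this gives $n = 5^{\alpha} p^{\beta}$ for a single prime $p \ge 7$.

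Next, I reduce to finitely many cases indexed by $s = \lfloor b/a \rfloor$. The hypothesis $a \le b/2$ yields $s \ge 2$, while Lemma~\ref{small-s}(v) gives $s \le 7$, and Lemma~\ref{about-s} lets me assume \eqref{no integer co-prime to n}. For each $s \in \{2,3,4,5,6,7\}$, the relevant parts of Lemma~\ref{small-s} will sharpen the admissible range of $\frac{n}{b}$ and will often pin down $p$. For instance, $s=3$ forces $p=7$: the unique integer in $[\frac{5n}{2b}, \frac{3n}{b}] \subset (6.25, 9)$ must fail to be coprime to $n$, and the only candidate other than $7$ is $8$, while $\gcd(8,n)=1$; consequently $\frac{n}{b} \in (5/2, 8/3)$ and $p=7$. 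Analogous refinements apply to each larger $s$ via parts (ii)--(iv) of Lemma~\ref{small-s}.

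Within each subcase I will either invoke Lemma~\ref{bound of n} with suitable constants $u,v$ to drive $n \le 1000$, contradicting Remark~\ref{remark}(1), or exhibit $k \in [1,b]$ and $m \in [\frac{kn}{c}, \frac{kn}{b})$ with $\gcd(m,n)=1$ and $ma<n$ and then conclude via Lemma~\ref{mainlemma}(1). The natural multiplier candidates are integers close to $\frac{5k}{2}$ for small $k$: e.g.\ $m=8,\ k=3$ handles the subcase $a \le n/8$ whenever $b < 3n/8$, and $m=9,\ k=4$ handles $a \le n/9$ whenever $c \ge 4n/9$. The coprimality of these candidates to $n$ is guaranteed by $\gcd(n,6)=1$ together with the two-prime structure.

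The main obstacle will be the case $s=2$, where $a$ can be as large as $\frac{3n}{16}$, so only very small multipliers satisfy $ma<n$. To close it I first plan to tighten $\frac{n}{b} \in (8/3, 3)$ by applying \eqref{no integer co-prime to n} to $4 \in [\frac{3n}{2b}, \frac{2n}{b}]$ (using $\gcd(4,n)=1$); this automatically places $8$ in $[\frac{3n}{c}, \frac{3n}{b})$ and resolves the subcase $a \le n/8$ via $m=8,\ k=3$. For the residual band $a \in (n/8, \frac{3n}{16}]$ I expect to apply Lemma~\ref{mainlemma}(2) with $M$ chosen so that two of the inequalities $|Ma|_n > n/2,\ |Mb|_n > n/2,\ |Mc|_n < n/2$ hold (a concrete candidate is $M=7$ when $p \neq 7$, where the tight control on $b$ and $c$ forces $|7c|_n < n/2$ and $|7a|_n > n/2$ across most of the band), while the corner $p=7$ is closed by a direct analysis in the spirit of Lemma~\ref{case k=1 lemma 4}.
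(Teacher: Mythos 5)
Your opening reductions are sound: $5\mid n$ hence $n=5^{\alpha}p^{\beta}$ with $p\ge 7$; the range $s\in[2,7]$; and the $s=3$ deduction that $p=7$ and $\frac{n}{b}\in(\frac52,\frac83)$ all check out. Your decomposition by $s$ is also genuinely different from the paper, whose proof of this lemma never cases on $s$ at all but on the location of $\frac{3n}{c}$ and $\frac{3n}{b}$ in $(6,9)$. However, the proposal has a real gap in its closing mechanisms, and it sits exactly on the hard region. First, Lemma~\ref{bound of n} can never fire in this lemma: it requires $u(k_1-1)>s+1$ with $u<\frac{n}{c}$, and here $k_1=2$ and $\frac{n}{c}\le\frac52$, so $u(k_1-1)<\frac52<3\le s+1$ always. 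Half of your advertised closing arsenal is therefore void; the paper's small-$n$ contradictions in this proof come instead from direct inequalities on $a=c-b+1$ (e.g.\ $\frac{n}{7}<a\le\frac{4n+19}{33}$ forcing $n<27$), not from Lemma~\ref{bound of n}.

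Second, and more seriously, $M=7$ does not close the residual band in $s=2$. There one may assume $a>\frac{n}{7}$ (otherwise $m=7$, $k=3$ already succeeds via Lemma~\ref{mainlemma}(1) when $p\ne 7$, since $a>\frac{n}{8}$ forces $c\ge\frac{3n}{7}$). But then $|7a|_n=7a-n\le\frac{n}{6}+7<\frac{n}{2}$, so the inequality $|Ma|_n>\frac{n}{2}$ fails; moreover $a>\frac{n}{7}$ together with $c<\frac{n}{2}$ forces $b<\frac{5n}{14}+1$, so $|7b|_n=7b-2n<\frac{n}{2}+7$ fails as well except for at most one integer value of $b$. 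Only $|7c|_n<\frac{n}{2}$ survives --- one inequality short of Lemma~\ref{mainlemma}(2). (Incidentally, the cap on $a$ is $\frac{n}{6}+1$, from $c<\frac{n}{2}$ and $b>\frac{n}{3}$, not $\frac{3n}{16}$.) The band $a\in(\frac{n}{7},\frac{n}{6}]$ is precisely where the paper does its real work: it splits on whether $7$ and $11$ divide $n$ and uses $M=11$ (for which $|11a|_n=11a-n>\frac{4n}{7}>\frac{n}{2}$ does hold), $M=13$ and $M=17$, together with explicit four-term computations summing to $n$ or $3n$ (multipliers $11$ and $23$, the latter invoking Remark~\ref{remark}(2)) and the auxiliary moves $m=9,k=4$ and $m=12,k=5$ in the $7\mid n$ branch. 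Your deferral of the $p=7$ corner to ``an analysis in the spirit of Lemma~\ref{case k=1 lemma 4}'' points at the wrong regime --- that lemma lives at $\frac{n}{b}\in(7,8)$ with the $\ell$-interval machinery, whereas here $\frac{n}{b}<3$ and the paper's Subcase 3.3 runs the $M\in\{11,13\}$ argument instead. As written, the proposal would fail on exactly the configurations (e.g.\ $a\approx 0.15n$, $b\approx 0.35n$, $c\approx 0.5n$) that make this lemma the difficult case; the missing idea is the passage to larger coprime multipliers chosen according to the two-prime structure of $n$.
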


\begin{proof}
Note that $m_1=5$ and $b\ge 2a\ge 6$. Since $\gcd(n, m_1)>1$ we have $5 \mid n$. Since $4 < \frac{2n}{c} \le 5 < \frac{2n}{b} < 6$, we obtain that $\frac{n}{3} \le b < \frac{2n}{5} \le c < \frac{n}{2}$.

Recall that $n > 1000$. We shall show that either there exist positive integers $k, m$ satisfying the condition of Lemma \ref{mainlemma} (1), or  there exists a positive integer $M$  satisfying the condition of Lemma \ref{mainlemma} (2), and then the lemma follows from Lemma \ref{mainlemma}.

By the definition of $k_1$, we conclude that $[\frac{k_2n}{c}, \frac{k_2n}{b})$ contains at least one integer for each $k_2 \ge k_1=2$. Note that $6 < \frac{3n}{c} < \frac{3n}{b} < 9$. We distinguish   three cases.

\medskip
\noindent {\bf Case 1.} $7 < \frac{3n}{c} \le 8 < \frac{3n}{b} < 9.$ Then $\frac{n}{3} < b < \frac{3n}{8} \le c < \frac{3n}{7}$.

Note that $\gcd(n, 8)=1$. Let $m = 8$ and $k=3 \,  ( \le b=6)$. Since $\frac{n}{3} < b < c < \frac{3n}{7}$,  $ma = m (c-b+1) \le 8 \time (\frac{3n-1}{7} - \frac{n+1}{3} +1) < n$, and we are done.

\medskip
\noindent {\bf Case 2.} $6 < \frac{3n}{c} \le 7 < \frac{3n}{b} < 8.$ Then $\frac{3n}{8} < b < \frac{2n}{5} < \frac{3n}{7} \le c < \frac{n}{2}.$

If $\gcd(7, n)=1$, then let $m = 7$ and $k=3$. Since $\frac{3n}{8} < b < c < \frac{n}{2}$, $ma = m (c-b+1) \le 7 \time (\frac{n-1}{2} - \frac{3n+1}{8} +1) < n$, and we are done.

Next assume that $7 \mid n$, i.e. $n = 5 ^{\alpha} \cdot 7^{\beta}$. Note that $8 < \frac{4n}{c} \le 10 < \frac{4n}{b} < 12$.

If $9 \not\in [\frac{4n}{c}, \frac{4n}{b})$, then $\frac{4n}{c} >9$. Let $m= 12$ and $k=5$. Since $\frac{5n}{c} < 12 < \frac{5n}{b} $ and $\frac{3n}{8} < b < c < \frac{4n}{9}$,  we have $ma = m(c-b+1) \le 12 \time (\frac{4n-1}{9} - \frac{3n+1}{8} + 1) < n$, and we are done.

If $9 \in [\frac{4n}{c}, \frac{4n}{b})$, then $\frac{4n}{c}  \le 9 < 10 < \frac{4n}{b}$ and thus $ \frac{3n}{8} < b < \frac{2n}{5} < \frac{4n}{9} < c < \frac{n}{2}.$ So $$ 8n+\frac{n}{2} < \frac{69n}{8} < 23b < \frac{46n}{5} < 9n + \frac{n}{2}< 10n < \frac{92n}{9} < 23c < \frac{23n}{2} = 11n + \frac{n}{2}.$$

Note that $a = c-b+1 \le \frac{n-1}{2} - \frac{3n+1}{8} +1 = \frac{n+3}{8}$.  If $a > \frac{n}{8}$,  then let $M=12$ (note that $\gcd(n , 12)=1$). We obtain that  $|Ma|_n > \frac{n}{2}$ and $|Mb|_n > \frac{n}{2}$, and we are done. If $a < \frac{n}{8}$, since $\gcd (9, n)=1$, we may assume that $a > \frac{n}{9}$ (for otherwise, let $m = 9$ and $k=4$, we have $ma < n$, and we are done). Then $\frac{n}{9} < a < \frac{n}{8}$,  and thus $$2n+\frac{n}{2} < \frac{23n}{9} < 23 a < \frac{23n}{8} < 3n .$$


If $23c < 11n$, then $\frac{n}{9} < a = c - b + 1 \le \frac{11n-1}{23} - \frac{3n+1}{8} + 1 = \frac{19n + 57}{184}$,  which implies that $n < 40$, yielding a contradiction. So we must have  $23c > 11n$. Similarly, we can show that $23b < 9n$. Then $|23|_n + |23c|_n + |23(n-b)|_n + |23(n-a)|_n = 23 + (23c -11n) + (9n -23b) + (3n -23a) =n$ and we are done.

\medskip
\noindent {\bf Case 3.} $6 < \frac{3n}{c} \le 7  < 8 < \frac{3n}{b} < 9.$ Then $ \frac{n}{3} < b < \frac{3n}{8} < \frac{3n}{7} \le c < \frac{n}{2}$.

Note that $ a = c -b +1 \le \frac{n-1}{2} - \frac{n+1}{3} + 1 =\frac{n+1}{6}$ and $\gcd(n, 3)=1$. If $a > \frac{n}{6}$, let $M=3$. Then $|3a|_n > \frac{n}{2}$ and $|3c|_n < \frac{n}{2}$, and we are done. Next assume that $a < \frac{n}{6}$.

\noindent{\bf Subcase 3.1.} $ \gcd(7, n) = \gcd(11, n)  = 1$.

We may assume that $a > \frac{n}{7}$ (for otherwise, if  let $m = 7$ and $k=3$, we have $ma < n$, so the lemma  follows from Lemma \ref{mainlemma} (1)).  Hence $n < 11a < 2n$. Also, we have that $3n < \frac{11n}{3} < 11b < \frac{33n}{8} < 5n$ and $4n < \frac{33n}{7} < 11c < \frac{11n}{2} < 6n.$

If $ 11b < 4n$ and $11c > 5n $, we have $|11|_n + |11c|_n + |11(n-b)|_n + |11(n-a)|_n = 11 + 11c -5n + 4n -11b + 2n -11a =n$ and thus $\ind(S)=1$.

If $11b > 4n$ and $11c < 5n$, we have $|11|_n + |11c|_n + |11(n-b)|_n + |11(n-a)|_n = 11 + 11c -4n + 5n -11b + 2n -11a =3n$ and thus $\ind(S)=1$ (by Remark \ref{remark} (2)).

If $11b < 4n$ and $11c < 5n$, then $ \frac{n}{7} < a = c- b+ 1 \le \frac{5n -1}{11} - \frac{n+1}{3} +1 = \frac{4n+19}{33}$, so $n < 27$, a contradiction.

If $11b > 4n$ and $11c > 5n$, then $ \frac{n}{7} < a = c- b+ 1 \le \frac{n -1}{2} - \frac{4n+1}{11} +1 = \frac{3n+9}{22}$, so $n < 63$, again a contradiction.

\noindent {\bf Subcase 3.2.} $n = 5 ^{\alpha} \cdot 11^{\beta}$.

As in Subcase 3.1, we may assume that $a > \frac{n}{7}$. Then $ \frac{3n}{2} < \frac{13n}{7} < 13a < \frac{13n}{6} < \frac{5n}{2} < 4n < \frac{13n}{3} < 13b < \frac{39n}{8} < 5n < \frac{11n}{2} < \frac{39n}{7} < 13c < \frac{13n}{2}$.

If  $13c < 6n$, then $ \frac{n}{7} < a = c- b+ 1 \le \frac{6n -1}{13} - \frac{n+1}{3} +1 = \frac{5n+23}{39}$, so $n < 41 $, yielding a contradiction. Hence we must have that $13c > 6n$, and then $|13c|_n < \frac{n}{2}$. If $13a < 2n$ or $ 13b > \frac{9n}{2} $, then $|13a|_n > \frac{n}{2}$ or $|13b|_n > \frac{n}{2}$. Since $\gcd(n, 13)=1$,   the lemma follows from
Lemma~\ref{mainlemma} (2) with $M=13$.

Next assume that $13a > 2n$ and $ 13b < \frac{9n}{2} $. Then $\frac{2n}{13} < a < \frac{n}{6}$ and $\frac{n}{3} < b <\frac{9n}{26}$. Therefore, $$ \frac{5n}{2} < \frac{34n}{13} < 17a < \frac{17n}{6} < 3n < \frac{11n}{2} < \frac{17n}{3} < 17 b < \frac{153n}{26} < 6n.$$ We infer that $|17a|_{n} > \frac{n}{2}$ and $|17b|_{n} > \frac{n}{2}.$ Since $\gcd(n, 17)=1$,  the lemma follows from Lemma~\ref{mainlemma} (2) with $M=17$.

\noindent {\bf Subcase 3.3.} $n = 5 ^{\alpha} \cdot 7^{\beta}$.

As in Subcase 3.1, we may assume that $a > \frac{n}{8}$. By using a similar argument in Subcase 3.2,  we can complete the proof with  $M=11$ or $M=13$.
\end{proof}

\medskip
\begin{lemma}\label{k = 2}
If $k_1=2$, then $\ind(S)=1$.
\end{lemma}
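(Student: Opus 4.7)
The plan is to mimic the structure used in Lemma~\ref{k = 3}, partitioning the argument by the common value $m_0 = \lceil n/c \rceil = \lceil n/b \rceil$. Since $c < n/2$ forces $n/c > 2$, and Lemma~\ref{small-s}(i) applied with $s \ge k_1 = 2$ gives $n/b < 6$, we will have $m_0 \in \{3, 4, 5, 6\}$, so there are four sub-cases. In each sub-case I first locate the integers that can sit in $[\frac{2n}{c}, \frac{2n}{b})$, which is nonempty by the definition of $k_1$ and, by Lemma~\ref{observation}, may only contain integers sharing a factor with $n$; the condition $\gcd(n,6)=1$ together with $n=p_1^\alpha p_2^\beta$ then pins down one prime factor of $n$ in each case.

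The easy cases will be $m_0 \in \{3,5,6\}$. For $m_0 = 3$, the only candidate integer in $[\frac{2n}{c}, \frac{2n}{b}) \subset (4,6)$ is $5$ (noting $3 \nmid n$ rules out the endpoints), so $5 \mid n$; since $a \le b/2$ follows from $s \ge 2$, the setup is exactly that of Lemma~\ref{k = 2 and 5 | n} and we are done. For $m_0 = 6$, Lemma~\ref{small-s}(ii) combined with $n/b > 5$ forces $s \le 3$, and then Lemma~\ref{bound of n} with $u=5$, $v=6$ yields $n<120$, contradicting $n>1000$. For $m_0 = 5$, Lemma~\ref{small-s}(ii) again gives $s \le 3$; if $s=2$ apply Lemma~\ref{bound of n} with $u=4$, $v=5$; if $s=3$, the integer in $[\frac{2n}{c}, \frac{2n}{b}) \subset (8,10]$ cannot be $9$ (which is coprime to $n$) and hence must be $10$, which both forces $5 \mid n$ and the refined inequality $n/c > 9/2$, after which Lemma~\ref{bound of n} with $u=9/2$, $v=5$ closes the case.

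The hard case will be $m_0 = 4$, where $n/c, n/b \in (3,4)$ and the unique integer in $[\frac{2n}{c}, \frac{2n}{b}) \subset (6,8)$ must be $7$, so $7 \mid n$ and $n/c \le 7/2 < n/b$. Lemma~\ref{small-s}(iv) rules out $s \ge 6$. For small $s$ I will refine the bounds using the location of $10$ or $11$ inside $[\frac{3n}{c}, \frac{3n}{b}) \subset (9,12)$ and then apply Lemma~\ref{bound of n}. For larger $s$ I aim to apply Lemma~\ref{mainlemma}(1) with $k=3$: because $n = 7^\beta p^\alpha$ for some prime $p \ne 2,3,7$, at least one of $10,11$ is coprime to $n$, and whenever this integer actually lies in $[\frac{3n}{c}, \frac{3n}{b})$ the bound $a \le c-b+1 \le n/3 - n/4 + 1$ together with the tightened estimates $c \ge 2n/7$ and $b > n/4$ should yield $ma < n$. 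In the exceptional sub-cases where neither $10$ nor $11$ lies in that interval (so the position of $n/c$ and $n/b$ relative to $10/3$ and $11/3$ is extremal), I will pass to $k=4$ or invoke Lemma~\ref{mainlemma}(2) with a carefully chosen $M \in \{11,13,17\}$ producing two of the inequalities $|Ma|_n > n/2$, $|Mb|_n > n/2$, $|Mc|_n < n/2$, following the style of Lemmas~\ref{case k=1 lemma 3} and~\ref{k = 2 and 5 | n}.

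The main obstacle will be the case $m_0 = 4$ with $s \in \{4,5\}$: here two systems of constraints must be reconciled, one from Lemma~\ref{small-s}(ii)-(iv) constraining the second prime factor of $n$ via the ``bad'' intervals $[\frac{(2s-2t-1)n}{2b}, \frac{(s-t)n}{b}]$, and another from the requirement that some $[\frac{k_2n}{c}, \frac{k_2n}{b})$ contain an $m$ coprime to $n$ with $ma < n$. Combining these with the hypothesis $n > 1000$ and the shape $n = p_1^\alpha p_2^\beta$ should leave only a handful of narrow windows for $(n/c, n/b)$, each eliminable either by a final appeal to Lemma~\ref{bound of n} or by an explicit choice of $m$ or $M$ in Lemma~\ref{mainlemma}.
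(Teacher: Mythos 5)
Your case division by $m_0=\lceil n/c\rceil=\lceil n/b\rceil\in\{3,4,5,6\}$ is exactly the paper's division by the size of $n/b$, and three of the four cases go through essentially as in the paper: $m_0=3$ reduces to Lemma~\ref{k = 2 and 5 | n} (your remark that $s\ge k_1=2$ yields $a\le b/2$ supplies the needed hypothesis), $m_0=6$ dies by Lemma~\ref{bound of n}, and $m_0=5$ is even quicker than you make it: since $[\frac{2n}{c},\frac{2n}{b})$ is open on the right and $\frac{2n}{b}\le 10$, the integer it must contain can only be $9$, and $\gcd(9,n)>1$ contradicts $\gcd(n,6)=1$ immediately; your reading ``$\subset(8,10]$, hence the integer is $10$'' misplaces the bracket, though your fallback with $u=\frac92$ in Lemma~\ref{bound of n} would still close that (in fact vacuous) branch.

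The genuine gap is the case $m_0=4$, i.e.\ $7\mid n$ and $\frac72<\frac nb<4$, which is where nearly all of the paper's proof lives, and several tools you earmark for it fail as assigned. First, Lemma~\ref{bound of n} is never applicable there for $s\ge 3$: it needs $u(k_1-1)>s+1$ with $u<\frac nc$, but $k_1=2$ and $\frac nc\le\frac72$ (from $\frac{2n}{c}\le m_1=7$) force $u(k_1-1)<\frac72<s+1$; so ``refine and apply Lemma~\ref{bound of n}'' cannot eliminate $s=3$, nor $s=2$ when $10\in[\frac{3n}{c},\frac{3n}{b})$, since then no lower bound $u>3$ for $\frac nc$ is available. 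Second, for $s=2$ the route through Lemma~\ref{mainlemma}(1) with $k=3$, $m\in\{10,11\}$ needs $ma<n$, but only $a\le\frac b2$ is known, giving $ma$ as large as roughly $\frac{11n}{7}$. The paper instead kills $s=2$, $s=4$ and $s=5$ outright from \eqref{no integer co-prime to n}: for $s=2$ the bad interval $[\frac{3n}{2b},\frac{2n}{b}]\subset(\frac{21}{4},8)$ contains $6$, contradicting $\gcd(n,6)=1$; for $s\in\{4,5\}$ it first pins $13<\frac{7n}{2b}<14<\frac{4n}{b}<15$, hence $\frac{26}{7}<\frac nb<\frac{15}{4}$, and then exhibits two integers ($17,18$ resp.\ $10,11$) in an interval that Lemma~\ref{small-s}(ii) says holds exactly one. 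Third, in the one surviving window $s=3$, the bad interval $[\frac{5n}{2b},\frac{3n}{b}]$ contains $10$, forcing $n=5^\alpha 7^\beta$ and $\frac{18}{5}<\frac nb<\frac{11}{3}$; consequently $11\notin[\frac{3n}{c},\frac{3n}{b})$ and $10$ is not coprime to $n$, so your $k=3$ hunt dead-ends and one needs the paper's explicit two-branch escape: $m=13$, $k=4$ if $13\in[\frac{4n}{c},\frac{4n}{b})$, and otherwise $m=18$, $k=5$ (using $\frac{5n}{c}\le\frac{35}{2}<18<\frac{5n}{b}$) --- a construction your ``pass to $k=4$ or choose $M\in\{11,13,17\}$'' gestures at but does not supply, and whose $k=5$ branch lies outside your list. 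So the skeleton is right, but the decisive case remains unproven and the tools proposed for it would not, as stated, carry it.
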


\begin{proof}
Note that $2 = k_1 \le s \le 7$. By Lemma \ref{small-s} (i) we have   $ \frac{n}{b} <6$. We distinguish several cases according to the value of $\frac{n}{b}$.

\medskip
\noindent {\bf Case 1.} $5 < \frac{n}{b} < 6.$ Since $\lceil \frac{n}{c} \rceil = \lceil \frac{n}{b} \rceil$, we have that $5 < \frac{n}{c} < \frac{n}{b} < 6$. If $s\ge 4$, by Lemma \ref{small-s} (ii) we have $\frac{n}{b} < 4,$  which yields a contradiction. If $2 \le s \le 3$, applying Lemma \ref{bound of n} with $u=5$ and $v=6$, we infer that $n < 120$, yielding a contradiction again.

\medskip
\noindent {\bf Case 2.} $4 < \frac{n}{b} \le 5.$ Since $\lceil \frac{n}{c} \rceil = \lceil \frac{n}{b} \rceil$, we infer that $4 < \frac{n}{c} < \frac{n}{b} \le 5$, and then $8 < \frac{2n}{c} \le m_1 < \frac{2n}{b} \le 10$,  so $m_1=9$. Now $\gcd(n, m_1)>1$, i.e, $\gcd(9, n)>1$ yields a contradiction to $\gcd(n, 6)=1$.

\medskip
\noindent {\bf Case 3.} $3 < \frac{n}{b} < 4.$ Since $\lceil \frac{n}{c} \rceil = \lceil \frac{n}{b} \rceil$, we have $3 < \frac{n}{c} < \frac{n}{b} < 4,$ thus $6 < \frac{2n}{c} \le m_1 < \frac{2n}{b} <8.$ Hence $m_1=7$ and thus $ \frac{7}{2} < \frac{n}{b} < 4$.  Since $\gcd(n, m_1)>1$, we obtain that $ 7 \mid n$.

If $s \ge 6$,  by Lemma \ref{small-s} (iv) we have that $\frac{n}{b} < \frac{14}{5} < 3$, a contradiction.

If $4 \le s \le 5$,  by Lemma \ref{small-s} (ii), we can  assume that $[\frac{(2s-2t-1)n}{2b}, \frac{(s-t)n}{b}]$ contains only one integer for each $t \in [0, \lfloor \frac{s}{2} \rfloor-1 ]$. Since $\frac{7}{2} < \frac{n}{b} < 4$,  $12< \frac{49}{4} < \frac{7n}{2b}  < 14 < \frac{4n}{b} < 16.$ Hence $14$ is the only integer in $[\frac{7n}{2b} , \frac{4n}{b}]$, and thus   $ 13 < \frac{7n}{2b} < 14 < \frac{4n}{b} < 15.$ Then $\frac{26}{7} < \frac{n}{b} < \frac{15}{4}$. If $s=5$, then $ 16 < \frac{117}{7} < \frac{9n}{2b} < \frac{135}{8} < 17 < 18 < \frac{130}{7} < \frac{5n}{b} < \frac{75}{4} < 19$, a contradiction (since   $[\frac{9n}{2b}, \frac{5n}{b}]$ contains only one integer). If $s=4$,  then $ 9 < \frac{65}{7} < \frac{5n}{2b} < \frac{75}{8} < 10 < 11 < \frac{78}{7} < \frac{3n}{b} < \frac{45}{4} < 12$, a contradiction (since $[\frac{5n}{2b}, \frac{3n}{b}]$ contains only one integer).

Next assume that  $s=3$. Since $\frac{7}{2} < \frac{n}{b} < 4$,  $\  8 < \frac{35}{4} < \frac{5n}{2b} < 10 < \frac{21}{2} < \frac{3n}{b} < 12$ and thus $10 \in [\frac{5n}{2b}, \frac{3n}{b}]$.   By \eqref{no integer co-prime to n} we have $\gcd(10, n)>1$. Since $\gcd(n, 6)=1$, we infer that  $5 \mid n$ and $n=5^{\alpha} \cdot 7^{\beta}$. Then $ \gcd(11, n) = \gcd(9, n) = 1$. So by \eqref{no integer co-prime to n}, both $ 9$ and $ 11$ are not in $ [ \frac{5n}{2b},  \frac{3n}{b} ]$. Therefore $9 < \frac{5n}{2b} < 10 < \frac{3n}{b} < 11$ and thus $ \frac{18}{5} < \frac{n}{b} < \frac{11}{3}$. Note that $12 < \frac{4n}{c} \le 14 <  \frac{4n}{b} < 16$ and $\gcd(n, 13)=1$. If $13 \in [\frac{4n}{c}, \frac{4n}{b})$, let $m=13$ and $ k=4$. Since $3 < \frac{n}{c} <\frac{n}{b} < \frac{11}{3}$, we have $\frac{3n}{11}< b < c < \frac{n}{3}$. Then $ma = 13 \time ( c- b+1) \le 13 \time (\frac{n-1}{3}-\frac{3n+1}{11} +1 ) < n$ and we are done. Hence we may assume that $13 \not\in [\frac{4n}{c}, \frac{4n}{b})$. Then $\frac{4n}{c} > 13$ and thus $c < \frac{4n}{13}$. Since $\frac{n}{c} < \frac{10}{3} < \frac{18}{5}< \frac{n}{b}$, we obtain that $\frac{5n}{c} < 18  < \frac{5n}{b}$. Let $m = 18$ and $k = 5$. Since $\gcd(18, n)=1$ and $\frac{3n}{11} < b < c < \frac{4n}{13}$, $ma = 18 \time ( c- b+1) \le 18 \time (\frac{4n-1}{13}-\frac{3n+1}{11} +1 ) < n$, and we are done.

Finally, assume that $s=2$. Since $\frac{3n}{2b} < \frac{3}{2} \time 4 = 6 < 7< \frac{2n}{b}$, we have $6\in [\frac{3n}{2b}, \frac{2n}{b}]$. By \eqref{no integer co-prime to n} we have $\gcd(6, n) > 1$, a contradiction.

\medskip
\noindent {\bf Case 4.} $2 < \frac{n}{b} < 3.$ Since $\lceil \frac{n}{c} \rceil = \lceil \frac{n}{b} \rceil$, we have $2 < \frac{n}{c} < \frac{n}{b} <3,$ and thus $4 < \frac{2n}{c} \le m_1 < \frac{2n}{b} < 6,$ so $m_1=5$. Since $\gcd(n, m_1)>1$, we have $5 \mid n$. The result now follows from Lemma \ref{k = 2 and 5 | n}.
\end{proof}

\bigskip


Now Proposition \ref{case k > 1 and a is small}  follows immediately from Lemmas  \ref{k < 4}, \ref{k = 3} and \ref{k = 2}.

\bigskip

\section{Applications}

\medskip

In this section, we give two applications of our main result. It is shown that our main result (Theorem \ref{maintheorem1})  implies that Problem~\ref{problem} has an affirmative answer for the case when the order of $G$ is a prime power as well as the case when $|G|$ is a product of two different primes. We first remark that by using  a similar, but much simpler and shorter, argument as in the proof of Proposition \ref{maintheorem}, we obtain the following result.

\begin{proposition}\cite[Proposition 2.1]{LPYZ:10}\label{theorem2}
Let $|G|=n=p^{\alpha}$ where $p\in \mathbb P$ and $\gcd(p,6)=1$, $ \alpha \in \N$. Let $S=(g)(cg)((n-b)g)((n-a)g) $ be a minimal zero-sum sequence over $G$ such that $\ord(g)=n$, $1+c=a+b$ and $1 < a \le b < c < \frac{n}{2}.$ Then $\ind(S)=1$.
\end{proposition}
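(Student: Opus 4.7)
The plan is to mimic the proof of Proposition \ref{maintheorem}, using Lemma \ref{mainlemma} as the main engine, but to exploit a crucial simplification that arises for $n = p^{\alpha}$ with $p \geq 5$: an integer $m$ fails $\gcd(m,n)=1$ if and only if $p \mid m$, so no two consecutive integers can both be non-coprime to $n$, and any interval of real length at least $2$ automatically contains an integer coprime to $n$.

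I would first invoke Remark \ref{remark} to reduce to the setup $1+c = a+b$ with $1 < a \le b < c < n/2$. Then I would check that Lemma \ref{small-a} carries over verbatim: its closing coprimality claim on the triple $\{2k'+1,\, 2k'+3,\, 2k'+5\}$ is immediate in the prime-power case, since at most one of these three odd numbers can be divisible by $p \geq 5$. So we may further assume $a \geq 3$. Setting $s = \lfloor b/a \rfloor$, the strategy is then to apply Lemma \ref{about-s} whenever $s \geq 2$: each of the intervals $[\tfrac{(2s-2t-1)n}{2b},\, \tfrac{(s-t)n}{b}]$ has length $n/(2b) > 1$, and by the simplification above such an interval contains an integer coprime to $n$ as soon as $n/(2b) \geq 2$, i.e.\ whenever $b \leq n/4$.

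The remaining work falls into two regimes: $b > n/4$ (so $\lceil n/b\rceil \in \{3, 4, 5\}$), and $s = 1$. In both regimes I would split along $k_1 = \min\{k \geq 1 : [kn/c,\, kn/b) \text{ contains an integer}\}$, paralleling Propositions \ref{case k = 1}, \ref{case k > 1 and a is big}, and \ref{case k > 1 and a is small}. Throughout, the most delicate sub-cases of the two--prime-power proof, namely those handling configurations like ``$\gcd(x,n) > 1$ and $\gcd(x+1,n) > 1$'', evaporate entirely; in every surviving case either the relevant interval $[kn/c,\, kn/b)$ contains at least two integers and thereby produces a coprime $m$ together with $ma < n$ by the same estimates used in Section 3, or the interval contains a single multiple of $p$ and Lemma \ref{bound of n} forces $n$ below the standing bound $n > 1000$, yielding the required contradiction.

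The main obstacle I anticipate is the $s = 1$, $k_1 = 1$ regime when the lone integer $m_1 = \lceil n/c\rceil$ is divisible by $p$. Here one must pass to a later interval $[\ell n/c,\, \ell n/b)$ of minimal $\ell$ and locate $m$ there while simultaneously ensuring $ma < n$; this is essentially the content of Lemmas \ref{case k=1 lemma 3}--\ref{case k=1 lemma 4}, but considerably streamlined, because only a single prime $p$ can obstruct coprimality, so all of the ``either $\gcd(11,n)=1$ or $\gcd(14,n)=1$''-type bifurcations in the general proof degenerate into direct numerical inequalities controlled by the explicit bounds on $b$ and $c$ coming from $\lceil n/b\rceil$ together with $n > 1000$.
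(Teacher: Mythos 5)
Your proposal is correct and follows essentially the same route as the paper: the paper does not write out a proof but cites \cite[Proposition 2.1]{LPYZ:10} with the explicit remark that it follows ``by a similar, but much simpler and shorter, argument as in the proof of Proposition \ref{maintheorem}'', and your plan is precisely that argument, with the correct key simplification that for $n=p^{\alpha}$ only multiples of $p$ obstruct coprimality, so no two consecutive integers are both non-coprime to $n$ and the bifurcations of Lemmas \ref{case k=1 lemma 2}--\ref{case k=1 lemma 4} and \ref{small-s} collapse. (The only slip, that $b>n/4$ gives $\lceil n/b\rceil\in\{3,4,5\}$ rather than $\{3,4\}$, merely adds a vacuous case and does not affect the argument.)
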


When the order of the group $G$ is a prime power (i.e. $|G|=p^k$), without loss of generality, we may assume that each minimal zero-sum sequence S can be written in the following form:
\begin{equation}\label{simplifiedform}
S=(p^lg)((p^lx_1)g)((p^lx_2)g)((p^lx_3)g), \mbox{where } \,\, 1\leq x_1, x_2, x_3 < \frac{n}{p^l}=p^{k-l} \mbox{and } |G|=\ord(g).
\end{equation}

Let $g_1=p^lg$, and then $S$ can be rewritten as $T =(g_1)(x_1g_1)(x_2g_1)(x_3g_1)$, which can regarded as a minimal zero-sum sequence over the subgroup $G_1=\langle g_1 \rangle$. The question of determining whether or not the index of $S$ (over $G$) is 1 is reduced to that of  determining whether or not the index of $T$ (over $G_1$) is 1. By applying Proposition \ref{theorem2} (and some simple observations), it is easy to show the latter is always the case, answering Problem~\ref{problem} affirmatively for the prime power case.

\medskip

\begin{theorem}[Li, Plyley, Yuan and Zeng (2010)]\label{theorem for prime power}
Let $G$ be a cyclic group of prime power order such that $\gcd(|G|, 6)=1$. Then every minimal zero-sum sequence $S$ over $G$ of length $|S|=4$ has $\ind(S)=1$.
\end{theorem}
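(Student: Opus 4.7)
The plan is to reduce the prime power case to Proposition \ref{theorem2} via the normalization already described in \eqref{simplifiedform}. Starting from a minimal zero-sum sequence $S=(n_1 g)(n_2 g)(n_3 g)(n_4 g)$ over $G$ of order $p^k$, I would let $l$ be the minimum of the $p$-adic valuations of $n_1,\ldots,n_4$. After reordering so that $v_p(n_1) = l$ and replacing the original generator $g$ by $(n_1 p^{-l})g$, which is again a generator since $n_1 p^{-l}$ is a unit modulo $p^k$, one can write $S$ in the form \eqref{simplifiedform}. Setting $g_1=p^l g$ gives an auxiliary sequence $T = (g_1)(x_1 g_1)(x_2 g_1)(x_3 g_1)$ over the subgroup $G_1 = \langle g_1 \rangle$ of order $p^{k-l}$. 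A short check shows that $T$ is again a minimal zero-sum sequence of length $4$: any proper zero-subsum of $T$ over $G_1$ lifts (through the inclusion $G_1 \hookrightarrow G$) to a proper zero-subsum of $S$ over $G$.

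Next I would verify the identity $\ind_G(S) = \ind_{G_1}(T)$. The key computation is that, for any integer $m$ with $\gcd(m,p)=1$ and any $j$ with $1\le j\le p^{k-l}$, one has
$$|m\,p^l\, j|_{p^k} \;=\; p^l\,|m\,j|_{p^{k-l}},$$
obtained by writing $mj = q\,p^{k-l}+r$ with $r=|mj|_{p^{k-l}} \in [1,p^{k-l}]$ and multiplying through by $p^l$. Summing over the four coefficients of $S$ and dividing by $p^k = p^l\cdot p^{k-l}$ yields $\|S\|_{mg}=\|T\|_{mg_1}$; since the reduction map $(\Z/p^k\Z)^\times \twoheadrightarrow (\Z/p^{k-l}\Z)^\times$ is surjective, taking the minimum over all generators in each case gives the same value.

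Now $T$ is a minimal zero-sum sequence of length $4$ over the cyclic group $G_1$ of prime power order coprime to $6$, and its first term has coefficient $1$. Applying the reductions of Remark \ref{remark} to $T$ puts it in the shape $(g_1)(c\,g_1)((|G_1|-b)g_1)((|G_1|-a)g_1)$ with $1+c=a+b$ and $1<a\le b< c<|G_1|/2$. The bound $a>1$ is automatic here, since $a=1$ would force $(g_1)((|G_1|-1)g_1) = (g_1)(-g_1)$ to be a proper zero-subsum of $T$, contradicting its minimality. Proposition \ref{theorem2} then gives $\ind(T)=1$, and by the index identity of the previous paragraph we conclude $\ind(S)=1$.

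The main (though admittedly mild) obstacle is just bookkeeping: verifying that the generator change in the first step is genuinely a unit modulo $p^k$, that minimality descends from $S$ to $T$, and that the minimum defining the index transports cleanly between $G$ and $G_1$ via the surjection of unit groups. None of these points is deep, but each must be spelled out; this is presumably what the excerpt refers to as \emph{some simple observations}.
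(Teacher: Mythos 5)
Your proposal is correct and follows essentially the same route as the paper: normalize $S$ to the form \eqref{simplifiedform}, pass to $T$ over the subgroup $G_1=\langle p^l g\rangle$, and invoke Proposition \ref{theorem2} after the reductions of Remark \ref{remark}. The paper leaves this reduction as a sketch (``some simple observations''), and your write-up correctly supplies exactly the missing details — the unit-generator change, descent of minimality, the identity $|mp^l j|_{p^k}=p^l|mj|_{p^{k-l}}$ transporting the index, and the minimality argument ruling out $a=1$.
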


We note that when the order of $G$ is not necessarily a prime power (say, for example, $|G|=p^{\alpha}q^{\beta}$ is a product of two prime powers), the above mentioned reduction is not always possible as shown in the following example.

\begin{example}\label{REDUCTION-IMPOSSIBLE}

Let $|G|=n=(5^{2})(7)=175$  and  $S= (5g)(135g)(77g)(133g)$ where $\ord(g)=n$. Clearly, 
$S$ cannot be reduced to the simplified form as described in (\ref{simplifiedform}). Note that $|4\times 5|_n+|4\times 135|_n+|4 \times 77|_n
+|4 \times 133|_n=20+15+133+7=175=n$, so  we obtain that $\ind(S)=1$.

\end{example}

The obstruction mentioned in the above example prevents us to apply directly our main result to answer the problem affirmatively for the general case when the order of $G$ is a product of two prime powers. However, our main result does apply to several new special situations. We conclude the paper by providing one such case.

\begin{theorem}\label{thm-pq}
If $|G|=pq$ with $p\ne q$ primes and $\gcd(pq,6)=1$, then Problem~\ref{problem}  has an affirmative answer.
\end{theorem}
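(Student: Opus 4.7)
The plan is to reduce Theorem \ref{thm-pq} to Theorem \ref{maintheorem1} whenever possible, and to dispatch the remaining (degenerate) case by projecting to a prime-order subgroup and invoking Theorem \ref{theorem for prime power}. Write $n = pq$ with distinct primes $p, q \ge 5$, let $g$ generate $G$, and let $S = (y_1 g)(y_2 g)(y_3 g)(y_4 g)$ be a minimal zero-sum sequence with $1 \le y_i \le n-1$.

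First I would dispose of the case in which some $y_{i_0}$ is coprime to $n$. After reindexing so that $i_0 = 1$, I replace $g$ by the new generator $g' = y_1 g$, which still has order $n$. Then $S$ takes the form $(g')(x_2 g')(x_3 g')(x_4 g')$ with $x_j \equiv y_j y_1^{-1} \pmod n$, and since $n = p^1 q^1$ has exactly the shape required by Theorem \ref{maintheorem1}, we conclude $\ind(S) = 1$ at once.

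The remaining case has $\gcd(y_i, n) > 1$ for every $i$, so each $y_i$ is divisible by exactly one of $p, q$. Partitioning the indices into $A = \{i : p \mid y_i\}$ and $B = \{i : q \mid y_i\}$, the key combinatorial step is to show that either $A$ or $B$ equals $\{1,2,3,4\}$. I would rule out the mixed splits as follows: a $1$--$3$ (or $3$--$1$) split forces the single odd-one-out term to be divisible by $n$, by reducing the zero-sum condition $\sum y_i \equiv 0 \pmod n$ modulo the opposite prime; this contradicts $y_i < n$. A $2$--$2$ split, say with $y_1 = p m_1$, $y_2 = p m_2$ both in $A$, forces $q \mid m_1 + m_2$ after reducing modulo $q$, so $(y_1 g)(y_2 g)$ is already a proper zero-sum subsequence, contradicting minimality of $S$. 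I expect this dichotomy to be the main (though short) content of the argument.

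By symmetry assume all $y_i$ are divisible by $p$; write $y_i = p m_i$ with $1 \le m_i \le q-1$, and set $h = pg$, so $\ord(h) = q$. The induced sequence $T = (m_1 h)(m_2 h)(m_3 h)(m_4 h)$ is a minimal zero-sum sequence of length $4$ over the prime-order subgroup $\langle h \rangle$ (minimality transfers because multiplication by $p$ is a bijection between zero-sum subsequences in the two contexts), and $\gcd(q, 6) = 1$. Theorem \ref{theorem for prime power} therefore supplies an integer $m'$ with $\gcd(m', q) = 1$ and $\sum_{i=1}^4 |m' m_i|_q = q$. Using the Chinese Remainder Theorem I would lift $m'$ to $m \in \mathbb Z$ with $m \equiv m' \pmod q$ and $m \equiv 1 \pmod p$, so $\gcd(m, n) = 1$ and $|m y_i|_n = |p m m_i|_{pq} = p \cdot |m' m_i|_q$ for each $i$. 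Summing yields $\sum_i |m y_i|_n = p \cdot q = n$, and Remark \ref{remark}(0) concludes $\ind(S) = 1$.
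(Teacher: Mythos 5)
Your proposal is correct and follows essentially the same route as the paper: reduce to Theorem~\ref{maintheorem1} when some term is coprime to $n$, rule out the mixed divisibility splits via minimality (the paper's WLOG silently covers your $1$--$3$ case, which you rightly make explicit), and in the all-$p$ (or all-$q$) case project to the prime-order subgroup, apply Theorem~\ref{theorem for prime power}, and lift the multiplier back to $\Z/pq\Z$. Your only deviation is cosmetic: you lift via the Chinese Remainder Theorem ($m\equiv m'\pmod q$, $m\equiv 1\pmod p$), whereas the paper observes that one of $m$ or $m+q$ is already coprime to $pq$ -- both are valid one-line fixes of the same coprimality issue.
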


\begin{proof}
Let $S = (x_1 g)(x_2g)(x_3g)(x_4g)$  be a minimal zero-sum sequence over $G$ such that
 $\mbox{ord}(g)=n$.  By Theorem~\ref{maintheorem1}, we may assume that $\gcd(n, x_i) > 1$ for every $i \in [1,4]$.

If $\gcd(n, x_i) =p$ for every $i\in [1,4]$, let $g_1=pg$. As mentioned above   $S$ can be rewritten as $T =(\frac{x_1}{p}g_1)(\frac{x_2}{p}g_1)(\frac{x_3}{p}g_1)(\frac{x_4}{p}g_1)$, which can be regarded as a minimal zero-sum sequence over the subgroup $G_1=\langle g_1 \rangle$, where $|G_1|=\ord(g_1)=q$. By Theorem \ref{theorem for prime power}, we have $\ind(T)=1$ in $G_1$, i.e. there exists $m \in [1, q-1]$  such that $\gcd(m , q)=1$ and  $ |\frac{mx_1}{p}|_q+ \, \cdots \, +|\frac{mx_4}{p}|_q= q$. Then $ |mx_1|_{pq}+\, \cdots \, +|mx_4|_{pq} = |(m+q)x_1|_{pq}+\, \cdots \, +|(m+q)x_4|_{pq}= pq  \, (*)$.  Note that either $\gcd(m, pq)=1$ or $\gcd(m+q, pq)=1$.  Now $(*)$ implies that $\ind(S)=1$.

If $\gcd(n, x_i) =q$ for every $i\in [1,4]$, let $g_2=qg$. As above we  obtain that $\ind(S)=1$.

Next without loss of generality, we may assume that $\gcd(n, x_1)=\gcd(n, x_2)=p$ and $\gcd(n, x_3)=\gcd(n, x_4)=q$. Since $x_1+x_2+x_3+x_4 \equiv 0\pmod {pq}$, we must have $q \mid x_1+x_2$. Then $pq \mid x_1+x_2,$ yielding a contradiction to the assumption that $S$ is minimal zero-sum sequence. This completes the proof.
\end{proof}

\bigskip

\end{document}